\theoremstyle{plain}
\newtheorem{thm}{Theorem}[section]
\newtheorem*{thm*}{Theorem}
\newtheorem{prop}[thm]{Proposition}
\newtheorem{remark}[thm]{Remark}
\renewcommand{\Re}{\mathrm{Re}}
\renewcommand{\Im}{\mathrm{Im}}
\def\C{{\mathbb{C}}}
\def\Z{{\mathbb{Z}}}
\def\T{{\mathbb{T}}}
\def\R{{\mathbb{R}}}
\def\SL{{\text{\rm{SL}}}}
\def\Sym{{\text{\rm{Sym}}}}
\def\Sp{{\text{\rm{Sp}}}}
\def\Aff{{\text{\rm{Aff}}}}
\def\Aut{{\text{\rm{Aut}}}}
\def\Zar{{\text{\rm{Zariski}}}}
\definecolor{purple}{rgb}{0.5,0,1}
\definecolor{darkgreen}{rgb}{0.1,0.4,0.2}
\definecolor{darkyellow}{rgb}{0.6,0.6,0.2}
\newenvironment{rodolfo}{
  \medskip
\begin{color}{darkgreen}
    \textcolor{darkyellow}{\textbf{Rodolfo:}} 
}{
\end{color}
  \medskip
}
\newenvironment{anthony}{
  \medskip
\begin{color}{red}
    \textcolor{red}{\textbf{Anthony:}} 
}{
\end{color}
  \medskip
}
\title[Kontsevich--Zorich monodromy groups of translation covers]{Kontsevich--Zorich monodromy groups of translation covers of some platonic solids}
\author{Rodolfo Gutiérrez-Romo}
\address{Rodolfo Gutiérrez-Romo: Centro de Modelamiento Matemático, CNRS-IRL 2807, Universidad de
Chile, Beauchef 851, Santiago, Chile.}
\email{g-r@rodol.fo}
\author{Dami Lee}
\address{Dami Lee:  Department of Mathematics, University of Washington, Seattle, WA 98115, USA}
\email{damilee@uw.edu}
\author{Anthony Sanchez}
\address{Anthony Sanchez: Department of Mathematics, University of California San Diego, 9500 Gilman Dr, La Jolla,
CA 92093, USA}
\email{ans032@ucsd.edu}
\date{}
\subjclass[2020]{Primary 37D40; Secondary 32G146\\
\emph{Key words and phrases: Translation surfaces, monodromy, square-tiled surfaces, moduli spaces of Abelian differentials, Hodge bundle, Kontsevich–Zorich cocycle.}}
\begin{document}
\maketitle 

\begin{abstract}
We compute the Zariski closure of the Kontsevich--Zorich monodromy groups arising from certain square-tiled surfaces that are geometrically motivated. Specifically we consider three surfaces that emerge as translation covers of platonic solids and quotients of infinite polyhedra, and show that the Zariski closure of the monodromy group arising from each surface is equal to a power of $\SL(2, \R)$.

We prove our results by finding generators for the monodromy groups, using a theorem of Matheus--Yoccoz--Zmiaikou \cite{MYZ} that provides constraints on the Zariski closure of the groups (to obtain an ``upper bound''), and analyzing the dimension of the Lie algebra of the Zariski closure of the group (to obtain a ``lower bound'').

Moreover, combining our analysis with the Eskin--Kontsevich--Zorich formula \cite{MR3270590}, we also compute the Lyapunov spectrum of the Kontsevich--Zorich cocycle for said square-tiled surfaces.

\end{abstract}


\section{Introduction and main results}
A \emph{translation surface} is a Riemann surface $X$ endowed with a nonzero holomorphic 1-form $\omega$. These surfaces form a moduli space with a natural $\SL(2, \R)$-action by postcomposition with coordinate charts, and the central subaction by the diagonal subgroup of $\SL(2, \R)$ is known as the \emph{Teichmüller flow}. An \emph{orbit closure} is a closed $\SL(2,\R)$-invariant subset of the moduli space.

Given an orbit closure $\mathcal{M}$ with underlying topological surface $S$, the Hodge bundle is defined as (orbifold) vector bundle induced by $H^1(S; \R)$. The \emph{Kontsevich--Zorich cocycle} is the dynamical cocycle over the Hodge bundle induced by the action of a suitable group. Different versions of this cocycle can be found in the literature, depending on whether all orbits, $\SL(2, \R)$-orbits or only Teichmüller flow orbits are considered. Moreover, Avila--Eskin--M\"{o}ller \cite{MR3717086} prove that the cocycle admits a semisimple decomposition, meaning that it can be split into $\SL(2, \R)$-invariant complementary subbundles.

The \emph{Kontsevich--Zorich monodromy groups} encode the homological data of translation surfaces along $\SL(2,\R)$-orbits, that is, how the Kontsevich--Zorich cocycle acts on the Hodge bundle, and have been studied extensively. General questions concern the algebraic nature of Kontsevich--Zorich monodromy groups. For example, Filip \cite{MR3619303} provided constraints for the Zarsiki closure of the Kontsevich--Zorich monodromy groups of translation surfaces corresponding to strongly irreducible subbundles and showed that, at the level of real Lie algebra representations, they must belong to a finite list of matrix families. On the other hand, Matheus--Yoccoz--Zmiaikou \cite{MYZ} proved constraints for these groups in the case of square-tiled surfaces, that is, covers of a torus branched over a single point. Realizability of the matrix families has been studied by several authors \cite{MR3959361, MR3959355, MR3743240}. Other algebraic questions concern whether the Kontsevich--Zorich monodromy groups are arithmetic (see Hubert--Matheus \cite{MR4120783} for the existence of an arithmetic monodromy group) and how frequently this is the case (see Bonnafoux et.\ al \cite{long}).

Lyapunov exponents of the Kontsevich--Zorich cocycle measure the growth rate of homological data along the Teichmüller flow. Since the Kontsevich--Zorich cocycle is symplectic in each piece $E$ of the semisimple decomposition, the Lyapunov spectrum corresponding to $E$ is of the form $\lambda_1 \geq \lambda_{2} \geq \dotsb \geq \lambda_d \geq 0 \geq -\lambda_d \geq \dotsb \geq -\lambda_2 \geq -\lambda_1,$
where $2d$ is the (real) dimension of $E$. An important particular case is when $E$ is the subbundle whose fiber at $(X, \omega)$ is $\langle \Re(\omega), \Im(\omega)\rangle$, which is known as the \emph{tautological plane} and always carries a Lyapunov spectrum of $\{\pm 1\}$.

There exists a relation between the Lyapunov spectrum of an orbit closure and the algebraic nature of its Kontsevich--Zorich monodromy group which is not yet fully understood. For example, Filip's classification can be refined in the presence of zero Lyapunov exponents, and, if the Lyapunov spectrum corresponding to a strongly irreducible subbundle is simple (that is, if all Lyapunov exponents are distinct), then the Zariski closure of the group is known to be $\Sp(2d, \R)$, where $2d$ is the (real) dimension of the subbundle.

For square-tiled surfaces, an algebraic criterion for simplicity was found in Matheus--Möller--Yoccoz \cite{MR3402801}, and a coding-free criterion was provided in Eskin--Matheus \cite{MR3424657}. Furthermore, it was shown in Eskin--Kontsevich--Zorich \cite{MR2820564} that there exist square-tiled surfaces where coincidences of Lyapunov exponents occur across distinct, symplectically-orthogonal, irreducible subbundles. This fact is interesting because there are no known reasons why simplicity should occur or not when considering the joint Lyapunov spectrum corresponding to the direct sum of such subbundles. Indeed, simplicity criteria (including the previous ones for square-tiled surfaces and also the general criterion due to Avila--Viana \cite{MR2350698, MR2316268}) concern a single strongly irreducible subbundle, and to our knowledge there are no general results relating exponents in different pieces of the semisimple decomposition of the Kontsevich--Zorich cocycle.

While individual Lyapunov exponents are in general extremely hard to compute, Eskin--Kontsevich--Zorich \cite{MR3270590} found a formula for the \emph{sum} of all positive Lyapunov exponents of an orbit closure. In particular, the resulting sum is always rational and, for the case of square-tiled surfaces, this rational number can be computationally found.

In this article, we consider the Kontsevich--Zorich monodromy groups of square-tiled surfaces that arise as the translation cover of platonic solids and quotients of infinite polyhedra.
Translation covers of platonic solids and polyhedral surfaces are useful because the extra structure of the cover has been used to shed light on the underlying solid or surface (see for example Athreya--Aulicino \cite{AA} where they used the translation cover to find a closed path
between the vertices on the dodecahedron).

Our main result identifies the Zariski closure of the Kontsevich--Zorich monodromy group of the translation cover of some platonic and polyhedral surfaces. 

\begin{thm*} The Zariski closure of the Kontsevich--Zorich monodromy group (restricted  to the zero-holonomy subspace) of the translation cover of
\begin{enumerate}
    \item  the octahedron is $\SL(2,\R)$;
    \item the cube is $ \SL(2,\R)^3$; and
    \item the mutetrahedron is $ \SL(2,\R)^4$.
\end{enumerate}
\end{thm*}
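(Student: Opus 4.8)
The plan is to prove the three cases via a uniform "upper bound / lower bound" strategy, as the abstract advertises. For each surface, I would first realize it explicitly as a square-tiled surface (origami): pick a fundamental domain of unit squares, record the horizontal and vertical gluing permutations, and thereby present the surface as a cover of the standard torus branched over one point. From this combinatorial data I would compute the homology of the surface, identify the zero-holonomy subspace (the complement of the pullback of $H_1$ of the torus, i.e. the part of the Hodge bundle on which the cocycle acts nontrivially), and determine its dimension $2d$. For the octahedron I expect $2d=2$, for the cube $2d=6$, and for the mutetrahedron $2d=8$, matching the claimed $\SL(2,\R)^{1,3,4}$.

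\medskip

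The next step is to compute the monodromy group concretely. The Kontsevich--Zorich monodromy group is generated by the action on $H_1$ (restricted to the zero-holonomy subspace) of the affine automorphisms coming from the Veech group, equivalently by the action of the parabolic elements obtained from the horizontal and vertical cylinder decompositions. I would write down these Dehn-twist matrices explicitly in a basis of the zero-holonomy subspace, obtaining an explicit finite generating set of integer symplectic matrices for the monodromy group $\Gamma$.

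\medskip

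For the \textbf{upper bound} I would invoke the theorem of Matheus--Yoccoz--Zmiaikou \cite{MYZ}: the automorphism group of the surface acts on homology and commutes with the monodromy, so the zero-holonomy subspace decomposes into isotypic/irreducible pieces under this symmetry group, and the Zariski closure must respect this decomposition. This constrains the Zariski closure $G=\overline{\Gamma}^{\,\Zar}$ to be contained in a product of factors, each preserving an irreducible subbundle; combined with the symplectic structure and Filip's constraints, this bounds $G$ inside $\SL(2,\R)^k$ with $k$ equal to the number of (rank-two) irreducible blocks—$1$, $3$, and $4$ respectively. For the \textbf{lower bound} I would show $G$ is as large as this upper bound by a Lie-algebra dimension count: using the explicit generators, I would compute the Lie algebra $\mathfrak{g}$ generated by the logarithms (or by iterated commutators / the Zariski-tangent space at the identity), and verify $\dim_\R \mathfrak{g} = 3k$, which forces $G^\circ = \SL(2,\R)^k$; checking that the generators are not contained in any proper subgroup with the same Lie algebra (e.g. ruling out diagonally embedded $\SL(2,\R)$'s by exhibiting a pair of generators that act nonproportionally across two blocks) then pins down $G$ exactly.

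\medskip

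The main obstacle I anticipate is the lower bound, specifically distinguishing the true product $\SL(2,\R)^k$ from a smaller subgroup that a priori could have the same abstract Lie algebra but sit diagonally inside the product (the ``coincidence of factors'' phenomenon alluded to in the Eskin--Kontsevich--Zorich discussion). Because the symmetry group forces the blocks to be permuted or identified in delicate ways, I expect the genuinely hard work is verifying that the monodromy mixes the irreducible factors independently—i.e. that no nontrivial isomorphism intertwines two of the rank-two blocks under all generators simultaneously. This amounts to an explicit but careful check that the projections of $\Gamma$ to distinct factors generate ``incommensurable'' or non-conjugate images, which I would handle by finding, for each pair of factors, a single group element whose two block-projections are not related by any fixed conjugacy, thereby certifying the full product and completing the dimension-count lower bound.
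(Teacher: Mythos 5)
Your overall architecture --- explicit origami presentation, explicit integer symplectic generators, an upper bound from Matheus--Yoccoz--Zmiaikou \cite{MYZ} via the isotypical decomposition under $\Aut(X)$, and a lower bound by a Lie-algebra dimension count using logarithms of parabolics --- is exactly the paper's strategy. However, your structural expectations are wrong in two of the three cases, and the error is conceptual, not just numerical. The covers have genus $4$, $9$, $5$, so the zero-holonomy subspaces have dimensions $2g-2 = 6$, $16$, $8$, not $2$, $6$, $8$. Consequently the decomposition is \emph{not} into $k$ rank-two irreducible blocks: the $\SL(2,\R)\simeq\Sp(2,\R)$ factors come from the \emph{multiplicity} spaces. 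Each non-tautological isotypical component has the form $W_\tau \simeq V_\tau^{n_\tau}$ with $n_\tau = 2$, and Proposition 3.16 of \cite{MYZ} confines the monodromy on $W_\tau$ to $\Sp(n_\tau,\R) = \Sp(2,\R)$ --- but only after one verifies that the centralizer division algebra satisfies $D_\tau \simeq \R$, a computation your sketch omits entirely (if $D_\tau$ were $\C$ or $\mathbb{H}$ the bound would be a unitary group instead). For the octahedron the entire $6$-dimensional zero-holonomy space is a \emph{single} isotypical component $V^2$ with $\dim V = 3$; for the cube there are three components of dimensions $4$, $6$, $6$. Only the mutetrahedron, whose automorphism group $(\Z/2\Z)^3$ has $1$-dimensional irreducibles, matches your picture of two-dimensional blocks.

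Two further corrections. First, the monodromy group is not ``equivalently'' generated by the parabolic twists of the horizontal and vertical cylinder decompositions: the Veech groups here are proper subgroups of $\SL(2,\Z)$ whose generators include elliptic and hyperbolic elements --- e.g.\ $\SL(O) = \langle T^3, S^{-1}T\rangle$ where $S^{-1}T$ has trace $1$ (elliptic of order six), and the cube's generators have traces $2$, $4$, $0$. The subgroup generated by twist parabolics alone may be strictly smaller, so one must compute the actual Veech group generators (the paper uses \texttt{surface\_dynamics}) and propagate them to homology, which the paper does via the intersection form. Second, your anticipated main obstacle --- distinguishing the full product from a diagonally embedded $\SL(2,\R)$ --- dissolves once the dimension count is done in the ambient representation: exhibiting $3k$ linearly independent elements of the Lie algebra of $\overline{\alpha(\Aff(X))}^{\Zar}$ inside the symplectic algebra of the whole zero-holonomy space (the paper conjugates $\log$ of a parabolic by suitable words in the generators) already forces the Lie algebra to equal all of $\mathfrak{sp}(2,\R)^k$, since any diagonal subproduct has dimension strictly less than $3k$. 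Your proposed pairwise non-conjugacy check of block projections is therefore redundant; the genuine work is the explicit linear-algebra verification, not a separate incommensurability argument.
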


\begin{remark}\label{rem: common covers}
Theorem 1.1 in \cite{AL} tells us the translation covers of the octahedron (cube, respectively) and the translation cover of the compact quotient of Octa-8 (Mucube, respectively) are isometric with their intrinsic metrics. (See Section~\ref{sec: unfolding} for the definition of these surfaces.) Hence our results naturally apply to translation covers of the quotient of Octa-8 and Mucube.
\end{remark}

\begin{remark}
In all cases but the translation cover of the octahedron, the zero-holonomy subspace of the first homology group can be further decomposed into irreducible pieces, and our methods allow us to compute the monodromy group restricted to any such component. Nevertheless, in all cases but the translation cover of the mutetrahedron, such decompositions do not yield strongly irreducible subbundles.
\end{remark}

It is worth highlighting that in all of our examples, the Kontsevich--Zorich monodromy groups are never Zariski dense in the ambient symplectic group. It would be interesting to see if this is true for all platonic solids and to understand how this is related to the symmetries of the underlying platonic solid.

In addition, we compute the Lyapunov spectrum of the translation covers we consider and show they are not simple. Furthermore, we even observe coincidences in exponents corresponding to distinct, symplectically-orthogonal subbundles for the cover of the cube and mutetrahedron. As was previously mentioned, there are no general results relating Lyapunov exponents across such distinct subbundles, so our work adds up to the literature of known examples where such coincidences between Lyapunov exponents exist. To the best of our knowledge, this phenomenon was first observed by Eskin--Kontsevich--Zorich as they studied cyclic covers of square-tiled surfaces \cite{MR2820564}.

More precisely, we show the following:

\begin{prop}
Counting multiplicities, the positive Lyapunov spectrum of the translation cover of 
\begin{enumerate}
    \item the octahedron is $\{1\} \cup \{1/2,1/2,1/2\}$;
    \item the cube is $\{1\} \cup \{2/3,2/3,2/3\} \cup \{1/3,1/3,1/3\} \cup \{1/3,1/3\}$; and
    \item the mutetrahedron is $\{1\} \cup \{1/2\} \cup \{1/2\} \cup \{1/2\} \cup \{1/2\}$;
\end{enumerate}
where the unions indicate the exponents corresponding to distinct, symplectically-orthogonal, irreducible pieces of the Hodge bundle.
\end{prop}

In Section~\ref{sec: intro}, we summarize the key objects and key tools that appear in this paper. In Sections \ref{sec: octa}, \ref{sec: cube}, and \ref{sec: mut}, we compute the Kontsevich--Zorich monodromy group of the translation cover of the octahedron, cube, and mutetrahedron, respectively. 

\subsection{Strategy} Our strategy to compute the Zariski closures is as follows. First, we analyze the representation-theoretic properties of the automorphism group of each surface and use the constraints found by Matheus--Yoccoz--Zmiaikou \cite{MYZ} to obtain an ``upper bound'' on such Zariski closures. This amounts to studying the irreducible representations of the automorphism group that arise by the homological action of the automorphism group and determining its nature, that is, whether it is real, complex or quaternionic, by examining the centralizer of the representation inside all endomorphisms of the vector space on which it acts. Then, we compute the so-called \emph{isotypical components} of each irreducible representation, that is, the direct sum of subspaces where the automorphism group acts as said irreducible representation. When this is done, we can apply the results by Matheus--Yoccoz--Zmiaikou \cite{MYZ} directly.

To obtain a ``lower bound'', we use generators for the Veech group of each surface to obtain enough linearly independent elements inside the Lie algebra of the Zariski closure of each monodromy group. This produces a lower bound on the dimension of said Zariski closure, and we see that this lower bound is equal to the dimension of our upper bound, concluding the proof.

To compute the Lyapunov spectrum of the surfaces, we make use of the Eskin--Kontsevich--Zorich formula multiple times to find the sum of the positive Lyapunov exponents corresponding to different subbundles coming from quotients of the original surface by subgroups of the automorphism group. These subbundles are sometimes not $\SL(2, \R)$-invariant, but the resulting Lyapunov exponents get carried over to other pieces of the $\SL(2, \R)$-invariant subbundle containing them. In this way, we are able to obtain enough linear equations to solve for each individual Lyapunov exponent.

\subsection{Acknowledgements} The authors would like to thank Jayadev Athreya for proposing this project. The authors thank Vaibhav Gadre for helpful conversations concerning Lyapunov exponents. The authors would also like to thank Carlos Matheus Santos and Anton Zorich for helpful insights throughout the project, and A.S.\ would especially like to thank C.M.S.\ for teaching him about this field of study and for being a sponsoring scientist as a part of the NSF Graduate Research Opportunities Worldwide program. During this project, Gutiérrez-Romo was supported by Centro de Modelamiento Matemático (CMM), ACE210010 and FB210005, BASAL funds for centers of excellence from ANID-Chile, and the FONDECYT Iniciación program under Grant No.\ 11190034; Lee was partially supported by the NSF under Grant No.\ DMS-1440140; and Sanchez was supported by the NSF Postdoctoral Fellowship under Grant No.\ DMS-2103136.
 
\section{Preliminaries}\label{sec: intro}
\subsection{Translation surface and moduli space}

A \emph{translation surface} is a polygon in $\C$ with sides identified, in pairs, by translation in such a way that the resulting topological surface is orientable. The cone angle at any given point is always an integer multiple of $2\pi.$ This naturally gives rise to a holomorphic 1-form; an order-$k_i$ zero of $\omega$ corresponds to a cone point of cone angle $2 \pi (k_i + 1).$ In other words, a translation surface is a Riemann surface with a nonzero holomorphic 1-form, which we denote $(X, \omega).$ The genus of the surface can be recovered from the order of the zeros by $\sum k_i = 2 g - 2.$

Given a genus $g,$ consider the set of pairs $(X, \omega)$ and denote this set by $\mathcal{L}_g.$ This set is equivalent to the set of abelian differentials on compact Riemann surfaces of genus $g.$ The \emph{moduli space of abelian differentials} is defined by $\mathcal{H}_g \colonequals \mathcal{L}_g/\Gamma_g$ where $\Gamma_g \colonequals \textrm{Diff}^+(S) / \textrm{Diff}_0^+(S)$ is the \emph{mapping class group} of genus $g$ surfaces $S.$

The identification between the set of abelian differentials and translation structures allows us to consider $\text{GL}^+(2,\R)$-actions on $\mathcal{L}_g.$ The $\text{GL}^+(2,\R)$-action on $\mathcal{H}_g$ preserves the zeros and their orders, 
and furthermore the $\SL(2,\R)$-action preserves the area of each translation surface. We define $\mathcal{H}_g^{(1)}$ as the moduli space of abelian differentials with unit area. 

$\SL(2,\R)$-actions on $(X,\omega)$ yield an orbit in the moduli space. If this orbit is closed, we say that $(X,\omega)$ is a Veech surface and define the \emph{Veech group} of $(X,\omega)$ as the stabilizer in $\SL(2,\R).$ We denote the Veech group of $(X,\omega)$ by $\SL(X,\omega)$ or $\SL(X).$

\subsection{Square-tiled surfaces}

A \emph{square-tiled surface} (or \emph{origami}) is a translation surface defined by a finite number of unit squares where identification of edges can be viewed by two permutations $\sigma_h, \sigma_v$ on the set of squares. Namely, the right side of square $i$ is identified with the left side of square $\sigma_h(i)$ and the top side of square $i$ is identified with the bottom side of square $\sigma_v(i).$ The surface is defined uniquely up to simultaneous conjugation. That is, $(\sigma_h,\sigma_v)$ and $(\varphi \sigma_h \varphi^{-1}, \varphi \sigma_v \varphi^{-1})$ define the same origami for any $\varphi \in S_N.$ Since a square-tiled surface can be defined completely combinatorically, its Veech group can be computed by \texttt{SageMath} and \texttt{surface\_dynamics} \cite{DFL}. One indication of how this is done is through the use of the matrices $$
    T=
  \begin{pmatrix}
     1 & 1\\
     0 & 1
  \end{pmatrix}
  \text{ and }
  S =
  \begin{pmatrix}
     1 & 0\\
     1 & 1
  \end{pmatrix}.$$
For a square-tiled surface, the Veech group is necessarily contained in $\SL(2,\Z)$. Since  $\SL(2,\Z)$ is generated by $T$ and $S$, understanding the Veech group amounts to understanding the action of $T$ and $S$. The action of $T$ and $S$ on a square-tiled surface $X = (\sigma_h,\sigma_v)$ can be defined in a purely combinatorial manner via the formulae $T(X) = (\sigma_h,\sigma_v\sigma_h ^{-1})$ and $S(X) = (\sigma_h\sigma_v ^{-1},\sigma_v)$.

Another way of defining a square-tiled surface $(X, \omega)$ is as a branched covering $p\colon (X,\omega) \to \T^2$ branched only at $0 \in \T^2$ and $\omega = p^*(dz).$ Then the \emph{automorphism group} of a square-tiled surface is the group of homeomorphisms $f$ on $X$ which satisfy $p \circ f = p.$



\subsection{Actions on homology and the Kontsevich--Zorich monodromy group}

As a square-tiled surface may have nontrivial automorphisms, the Hodge bundle over its $\SL(2,\R)$-orbit (and hence the corresponding Kontsevich--Zorich cocycle) is orbifoldic. However, by considering a finite cover of $\SL(X)$ given by the affine diffeomorphisms $\Aff(X)$, we can destroy the orbifoldic nature and obtain a genuine cocycle. While the Hodge bundle is usually defined as having $H^1(X; \R)$ as a fiber, by Poincaréduality we will equivalently consider $H_1(X; \R)$ as the fiber.

Let $\tilde \alpha\colon \Aff(X,\omega)\to \Sp(H_1(X;\R))$ denote the representation arising from the action of  $\Aff(X,\omega)$ on the absolute homology group $H_1(M,\R)$. 

The homology group has a natural splitting $$H_1(X;\R)=H_1^{\mathrm{st}}(X)\oplus H_1 ^{(0)}(X)$$
where $H_1^{\mathrm{st}}(X)$ is a 2-dimensional subspace known as the \emph{tautological plane} spanned by the real and imaginary parts of the implicit abelian differential $\omega$ and the space $H_1 ^{(0)}(X)$ is the $2g-2$-dimensional orthogonal complement (with respect to the intersection form) given by the zero-holonomy subspace. That is,
 $$H_1 ^{(0)}(X)=\left\{\gamma\in H_1(X;\R):\int_\gamma \omega=0\right\}.$$ 
 The image $\tilde \alpha(\Aff(X))$ respects this decomposition. Moreover, $\tilde \alpha|_{H_1^{\mathrm{st}}(X)}$ can be identified with the Veech group $\SL(X)$ (see e.g.\ the second paragraph of page 7 of\cite{MR3743240}) and as such is well understood. Thus, understanding $\tilde \alpha$ amounts to understanding how the zero-holonomy subspace decomposes under the action of $\Aff(X)$. We will refer to the restriction  of the image $\alpha = \tilde \alpha|_{H_1 ^{(0)}(X)}$ as the \emph{Kontsevich--Zorich monodromy group}.

\subsection{Unfolding of platonic surfaces}\label{sec: unfolding}

A polyhedral surface in $\R^3$ is a surface tiled by polygons so that the polygons (faces) are either disjoint or share an edge or a vertex. Hence, a polyhedral surface is naturally equipped with a cone metric. A point at the interior of a face or an edge is a trivial cone point as the cone angle is $2 \pi.$ Non-trivial cone points occur possibly at the vertices of the surface. If all cone angles are integer multiples of $2 \pi,$ then the surface has a translation structure. If the cone angles at cone points are $2 \pi \left(\frac{k_i}{q} + 1\right)$ for some integer $q,$ then one can consider its $q$-fold cover, the \emph{unfolding} (\emph{translation cover} or \emph{spectral curve}), branched at the vertices. Table~\ref{tab: platonic surfaces} shows the genus of the unfoldings (as square-tiled surfaces) studied in \cite{AAH} and \cite{AL}. 

\begin{table}[htbp]
\centering
\begin{tabular}[h]{c|c||c|c}
\hline 
Polyhedron & \begin{tabular}{@{}c@{}}Genus of the \\ translation cover\end{tabular} & \begin{tabular}{@{}c@{}}Quotient of the \\ infinite polyhedron\end{tabular} & \begin{tabular}{@{}c@{}}Genus of the \\ translation cover\end{tabular} \\ \hline \hline
Tetrahedron & 1 & Octa-8 $\{3,12\}$ & 4 \\ \hline
Octahedron & 4 & Mutetrahedron $\{6,6\}$ & 5 \\ \hline 
Cube & 9 & Mucube $\{4,6\}$ & 9 \\ \hline
Icosahedron & 25 & Octa-4 $\{3,8\}$ & 19 \\ \hline
Dodecahedron & 81 & Muoctahedron $\{6,4\}$ & 19 \\ \hline
& & Truncated Octa-8 $\{4,5\}$ & 49 \\ \hline 
\end{tabular}
	\caption{Genus of translation covers of polyhedral surfaces.}
	\label{tab: platonic surfaces}
\end{table}

The surfaces on the right half of Table~\ref{tab: platonic surfaces} are quotients of triply periodic polyhedral surfaces. We denote a polyhedral surface by Schl{\"a}fli symbols $\{p, q\}$ if it is tiled by regular Euclidean $p$-gons and all vertices are $q$-valent. Triply periodic polyhedral surfaces are infinite surfaces that are invariant under a rank-three lattice in $\R^3.$ Their quotients under the lattice are compact Riemann surfaces with natural polyhedral cone metric. In this paper, we refer to the compact quotients by the name of the original infinite surfaces.

\begin{figure}[htbp] 
\centering
\begin{minipage}{.33\textwidth}
	\centering
	\includegraphics[width=\textwidth]{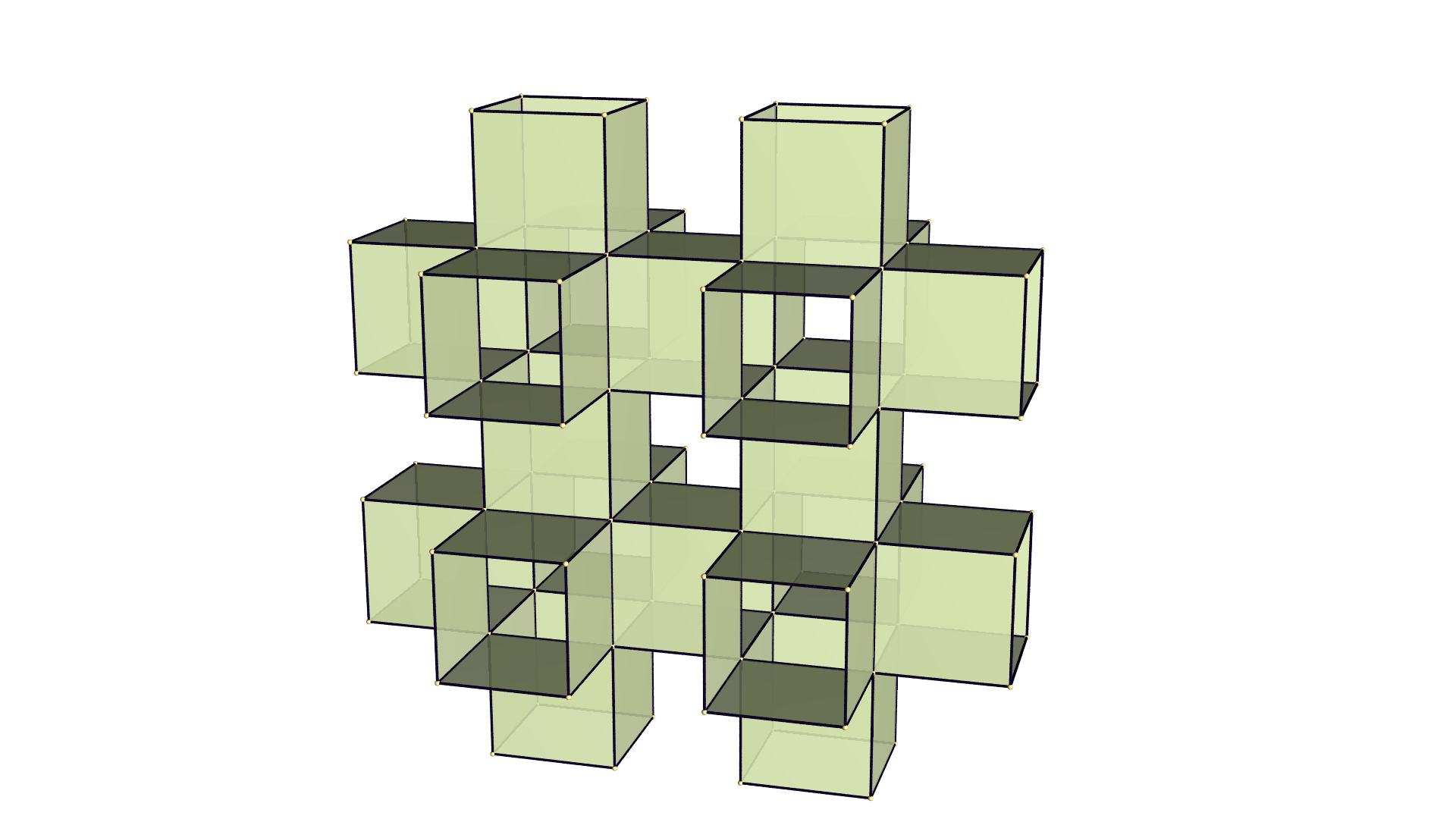}
	\end{minipage}%
\begin{minipage}{.33\textwidth}
	\centering
	\includegraphics[width=\textwidth]{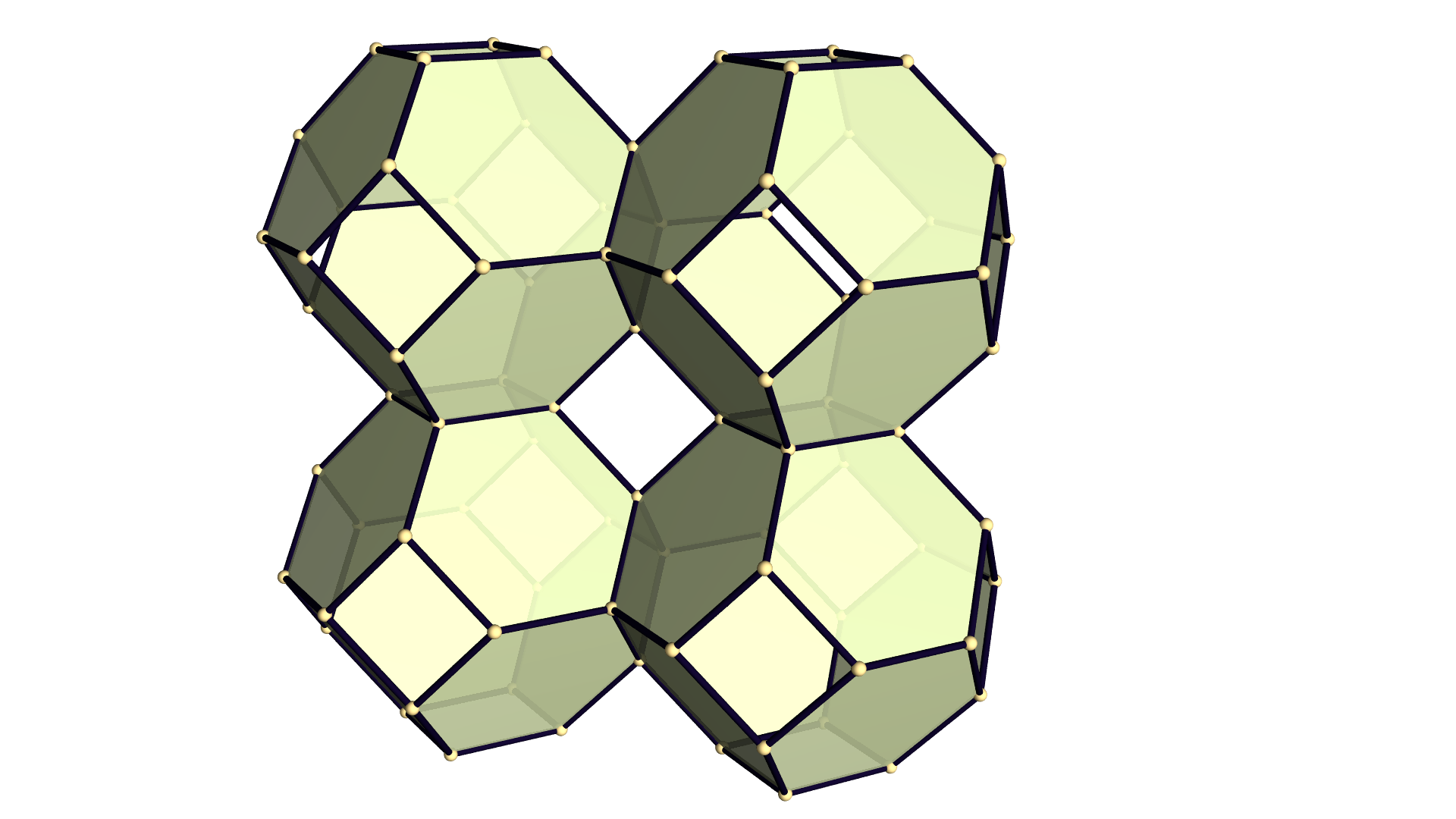}
	\end{minipage}%
\begin{minipage}{.33\textwidth}
	\centering
	\includegraphics[width=\textwidth]{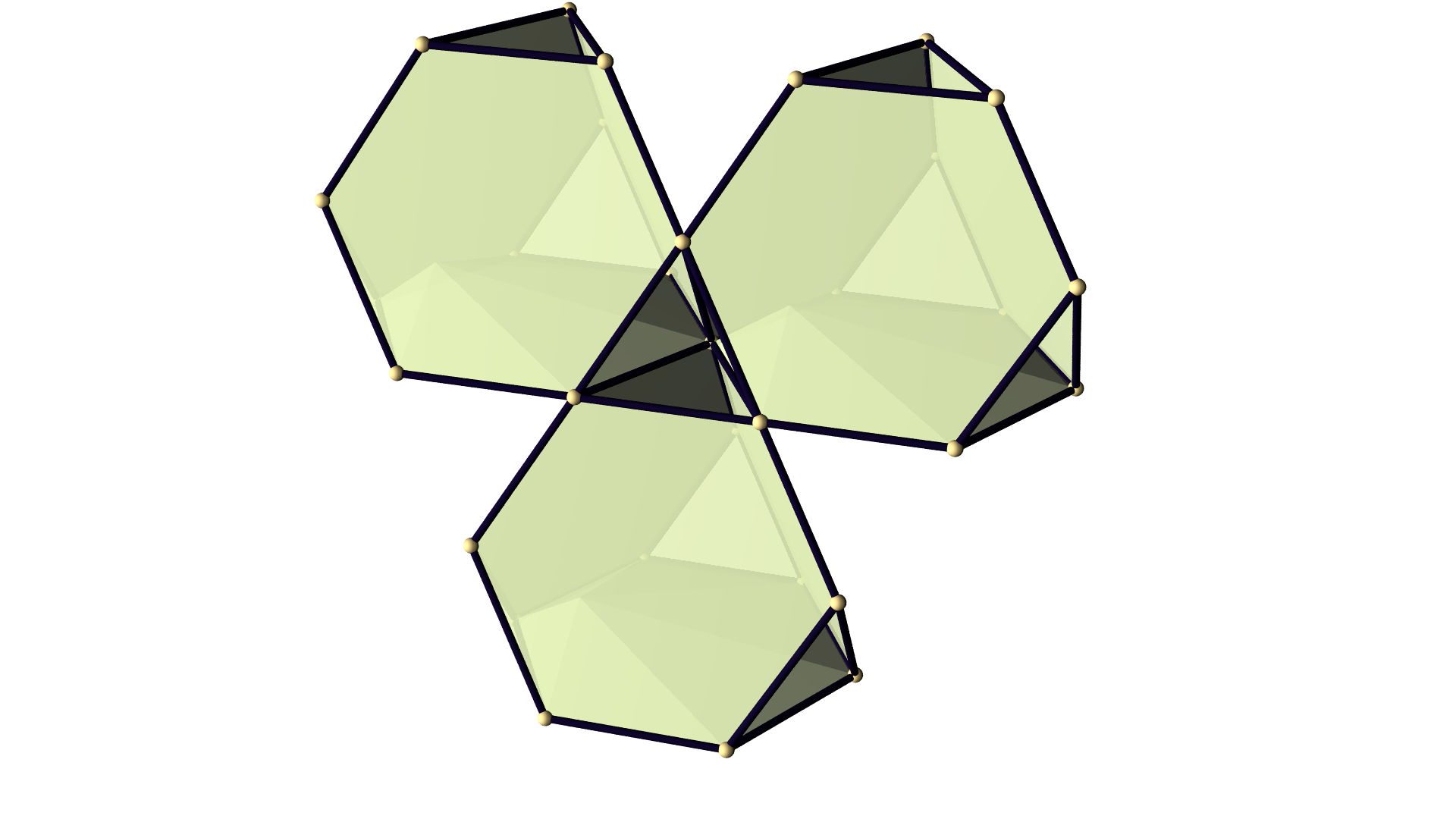}
	\end{minipage}
	\caption{A subset of the infinite polyhedra Mucube, Muoctahedron, and Mutetrahedron. Adapted from \cite{L}.}
	\label{fig: platonic surfaces 1}
\end{figure}

\begin{figure}[htbp] 
\centering
\begin{minipage}{.33\textwidth}
	\centering
	\includegraphics[width=\textwidth]{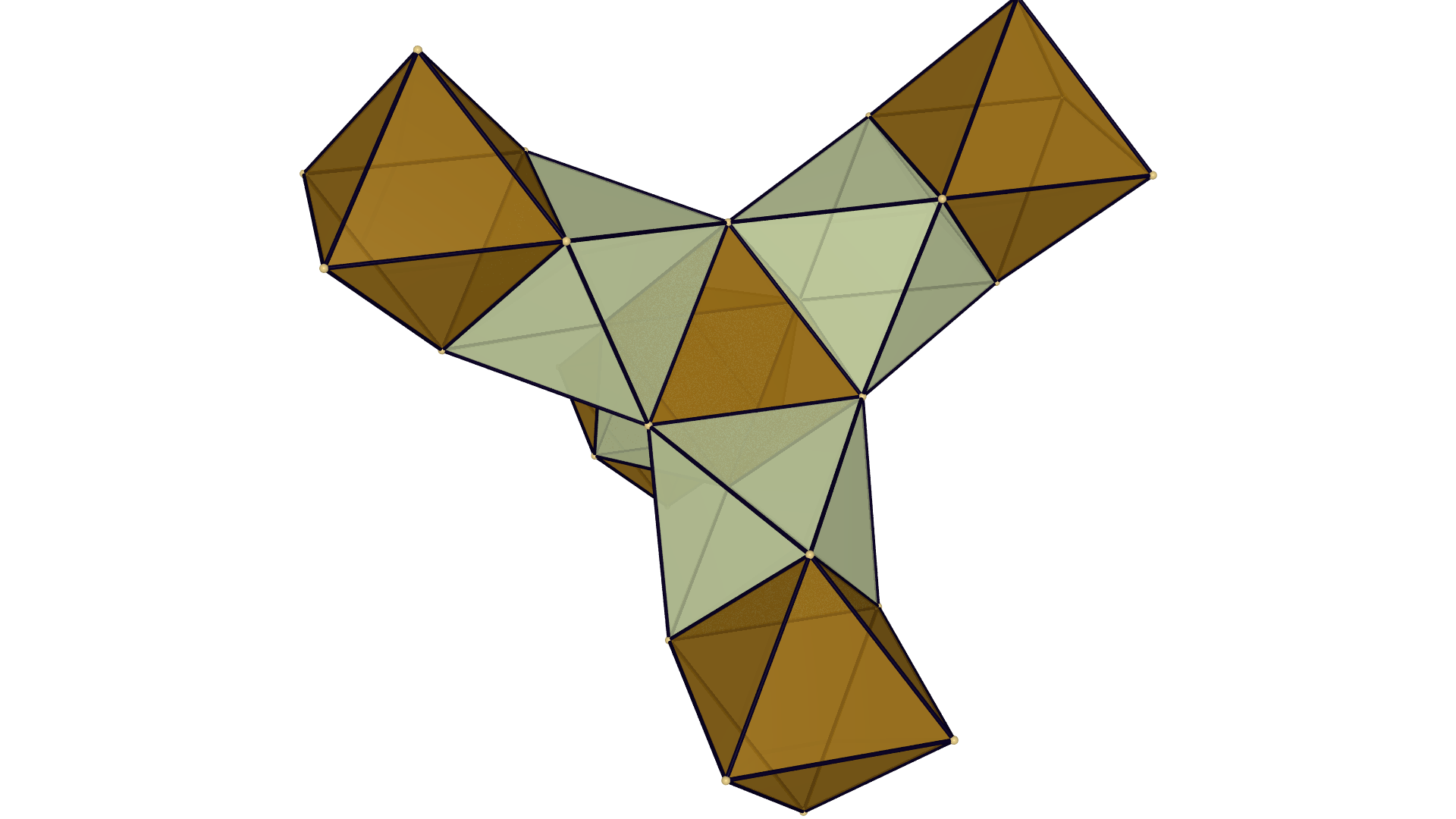}
	\end{minipage}%
\begin{minipage}{.33\textwidth}
	\centering
	\includegraphics[width=\textwidth]{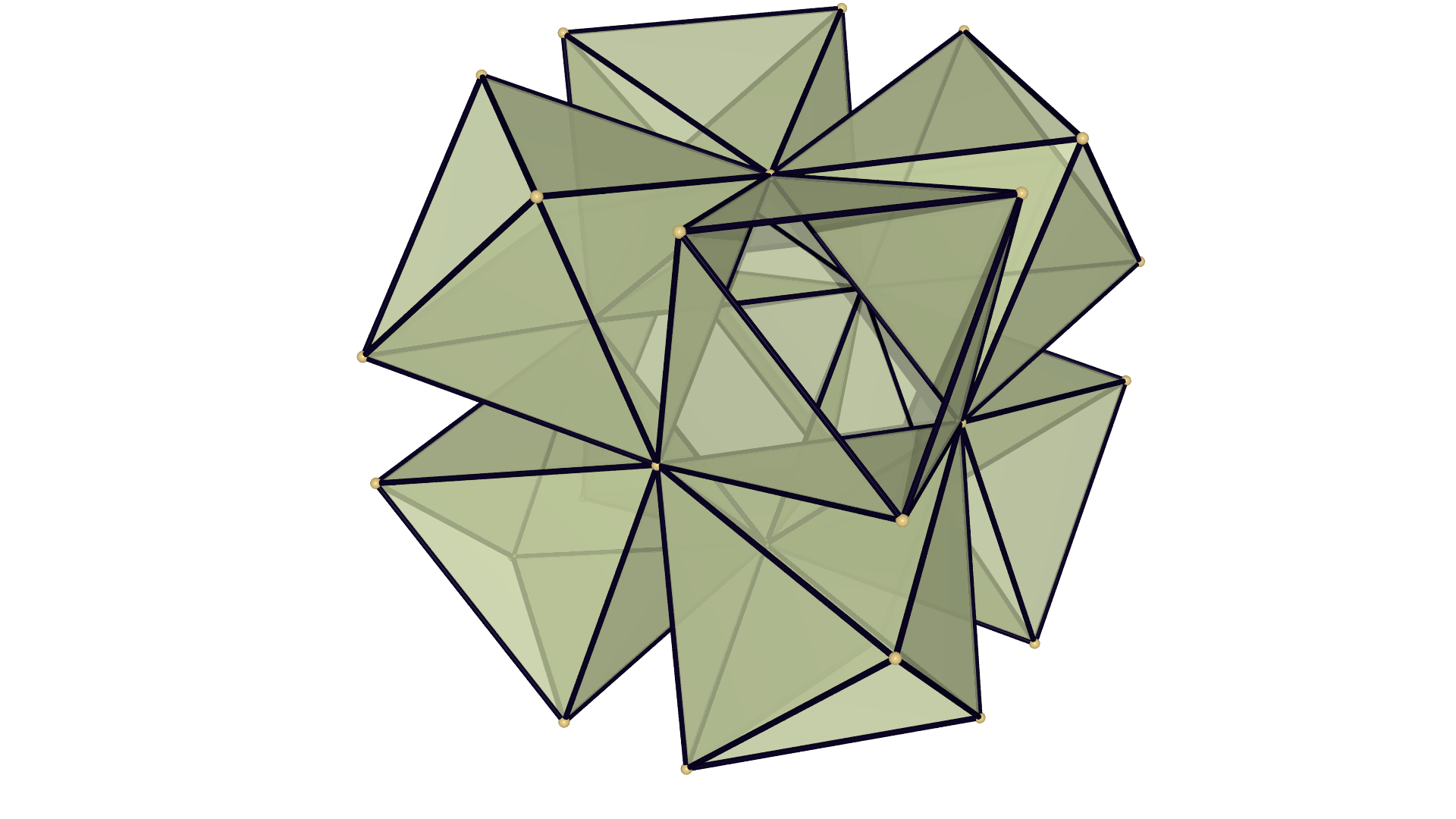}
	\end{minipage}%
\begin{minipage}{.33\textwidth}
	\centering
	\includegraphics[width=\textwidth]{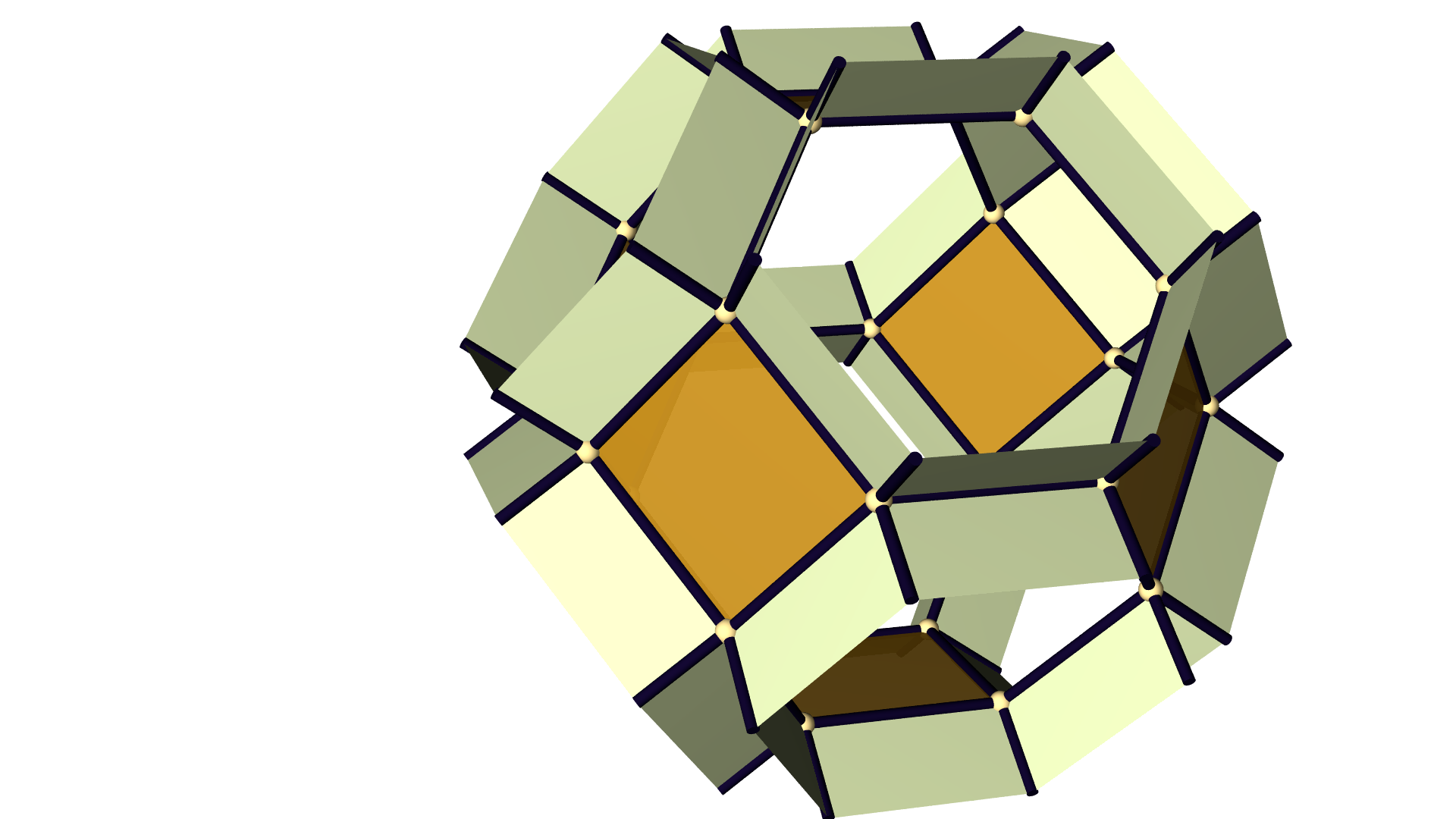}
	\end{minipage}
	\caption{A subset of the infnite polyhedra Octa-4, Octa-8, and Truncated Octa-8. Adapted from \cite{L}.}
	\label{fig: rad}
\end{figure}

As done in \cite{AAH} and \cite{AL}, the translation covers of all these surfaces (except the dodecahedron) can be studied under the same tools that are used in the study of square-tiled surfaces. To a triangle- or hexagon-tiled surface, we apply appropriate shear maps (Figure~\ref{fig: shear}) that map it to a square-tiled surface. By \cite{GJ}, these surfaces have Veech groups which are finite index subgroups of $\SL(2, \mathbb Z)$, the Veech group of the square torus. The Veech group of the triangle-tiled or hexagon-tiled surface is conjugate to the Veech group of the associated square-tiled surface. We will again abuse notation and call the translation covers by their underlying quotient surface.

\begin{figure}[htbp] 
	\centering
    \includegraphics[width=4.5in]{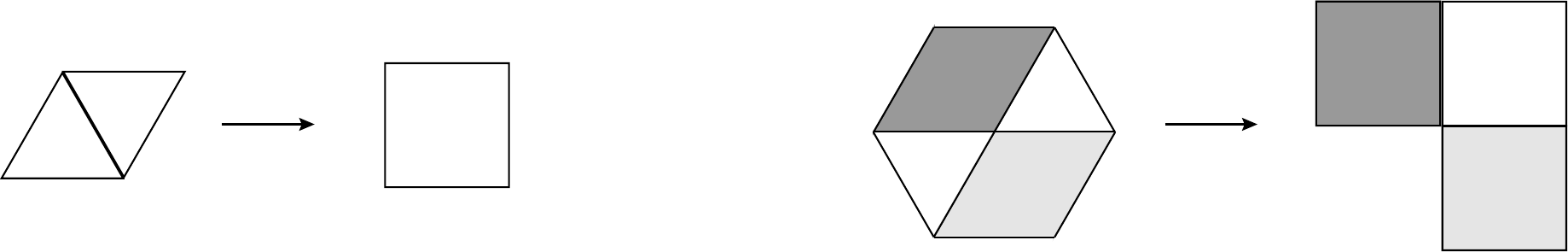}
    \caption{Shear maps on the doubled triangle and hexagon.} 
    \label{fig: shear}
\end{figure}

In this paper, we compute the Kontsevich--Zorich monodromy group for three translation surfaces: the translation cover (3-cover) of the octahedron, the 4-cover of the cube, and the 2-cover of the mutetrahedron. As noted in the Remark~\ref{rem: common covers}, the octahedron and Octa-8 have common translation covers, and so do the cube and Mucube. The translation cover of the tetrahedron is (conjugate to) a square torus. In particular, for the tetrahedron, the absolute homology is the same as the tautological plane and hence the Kontsevich--Zorich monodromy group is trivial. 

We choose these examples because the genus of the unfoldings are relatively low (less than 10.) The computation can be carried out on the unfolding of other surfaces (except the dodecahedron) in Table~\ref{tab: platonic surfaces}. However, in this paper we do not carry them out for practical reasons.

\subsection{Monodromy constraints}
Let $G = \Aut(X)$ denote the automorphism group. By Hurwitz's theorem on the order of automorphism groups, $G$ is a finite group. The vector space $H_1(X;\R)$ has the structure of a $G$-module induced by a representation $\rho\colon \Aut(X)\to \Sp(H_1(X;\R))$ arising from the homological action of $G$ on $H_1(X;\R)$. Since $G$ is finite, we now are able to utilize the well understood theory of finite dimensional groups (see e.g.\ Fulton and Harris \cite{MR1153249} or Serre \cite {MR0232867}). In particular, we have finitely many irreducible representations which we denote $\text{Irr}_\mathbb R (G)$. We decompose $H_1(S;\mathbb R)$ into irreducible pieces,
$$H_1(S;\mathbb R)=\bigoplus_{\tau\in \text{Irr}_\mathbb R (G)}V_\tau ^{ n_\tau}$$
where each $V_\tau$ is an irreducible subspace of  $H_1(S;\mathbb R)$ on which $G$ acts as the representation $\tau.$ The subspaces $W_\tau \colonequals V_\tau ^{ n_\tau}$ of the same $G$-irreducible representations are called \emph{isotypical components}. That is, $W_\tau$ is $n_\tau$ copies of the vector space $V_\tau$ corresponding to the irreducible representation $\tau\colon G\to \Sp(V_\tau)$ which is equal to $\rho|_{V_\tau}$.

It is worth noting that $\Aff(X)$ may not respect the decomposition of $H_1(S; \R)$ into a direct sum of isotypical components, since isotypical components are not, in general, $\Aff(X)$-invariant. Nevertheless, it is possible to pass from $\Aff(X)$ to a finite-index subgroup which does respect this decomposition. Observe that if $R \in \Aff(X)$ and $\pi \in G$, then $R \pi R^{-1} \in G$. Thus, we obtain a group homomorphism $\Aff(X) \to \Sym(G)$. The index of the kernel $\widetilde{\Aff}(X)$ of this homomorphism is finite inside $\Aff(X)$, since $\Sym(G)$ is finite. Since $\widetilde{\Aff}(X)$ consists exactly of the elements of $\Aff(X)$ that commute with each element of $G$, if $V_\tau$ is an irreducible subspace of $H_1(S; \R)$ on which $G$ acts as the representation $\tau$, and $R \in \widetilde{\Aff}(X)$, then $R V_\tau$ is also a subspace where $G$ acts as the representation $\tau$. Hence, the action of $\widetilde{\Aff}(X)$ may permute the different pieces that make up the isotypical component $W_\tau$, but it preserves $W_\tau$. Since we are mainly interested in computations regarding Zariski closures and since finite-index subgroups do not change the Zariski closure, we will ignore the need to pass from $\Aff(X)$ to a finite-index subgroup.

For each irreducible representation $\tau$ of $G$, we consider the associative division algebra, $D_\tau$, given by the centralizer of $\tau(G)$, the image of $\tau$, inside of $\text{End}_{\mathbb R} (V_\tau)$. That is,
$$D_\tau = \{X\in M_{\dim_\R {V_\tau}}(\R):\tau(G)X=X\tau(G)\}.$$


In Matheus--Yoccoz--Zmiaikou \cite{MYZ}, the authors show that the type of division algebra constrains the Zariski closure of the monodromy group. 

\begin{prop}[Proposition 3.16 of \cite{MYZ}]\label{PropMYZ} Let $W_\tau \colonequals V_\tau ^{n_\tau}$ be an isotypical component of the action of $\Aut(X)$ on $H_1(X;\R)$. If $D_\tau \simeq \mathbb R$, then $$\overline{\tilde \alpha(\Aff(X))|_{W_\tau}}^{\Zar}\subseteq \Sp(n_\tau,\R).$$
\end{prop}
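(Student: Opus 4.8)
The plan is to show that, restricted to $W_\tau$, the group $\tilde\alpha(\Aff(X))$ is forced into the commutant of $\rho(G)|_{W_\tau}$, and then to use the ambient symplectic form together with the hypothesis $D_\tau\simeq\R$ to identify that commutant with $\Sp(n_\tau,\R)$. The key structural input I would invoke is that the homological actions of $\Aff(X)$ and of $G=\Aut(X)$ commute: the automorphisms act on the Hodge bundle over the $\SL(2,\R)$-orbit by flat, locally constant (integral) bundle automorphisms, since the matrix of $g_*$ in a flat frame does not depend on the point of the orbit, and such flat automorphisms commute with the Gauss--Manin monodromy generated by $\tilde\alpha(\Aff(X))$. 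Consequently each $\tilde\alpha(f)$ is an endomorphism of the $G$-module $H_1(X;\R)$; it therefore preserves the canonical isotypical decomposition and restricts to an element of $\mathrm{End}_G(W_\tau)^{\times}$.

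Next I would compute this commutant. Writing $W_\tau\cong V_\tau\otimes\R^{n_\tau}$ with $G$ acting as $\tau\otimes\mathrm{Id}$, the real form of Schur's lemma gives $\mathrm{End}_G(W_\tau)\cong M_{n_\tau}(D_\tau)$. The hypothesis $D_\tau\simeq\R$ collapses this to $\mathrm{Id}_{V_\tau}\otimes M_{n_\tau}(\R)$, so every $\tilde\alpha(f)|_{W_\tau}$ has the form $\mathrm{Id}_{V_\tau}\otimes A_f$ with $A_f\in\mathrm{GL}_{n_\tau}(\R)$.

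To bring in the symplectic structure, I would first note that the intersection form is $G$-invariant and preserved by $\tilde\alpha(\Aff(X))$; since a real-type $\tau$ is self-dual, $W_\tau$ is orthogonal to all other isotypical components, so the form restricts to a nondegenerate antisymmetric $G$-invariant form on $W_\tau$, which under the above identification factors as $B_V\otimes B_n$ for the essentially unique $G$-invariant form $B_V$ on $V_\tau$. The crux is the Frobenius--Schur dichotomy: $D_\tau\simeq\R$ means $V_\tau$ is absolutely irreducible, so $B_V$ is symmetric; combined with the antisymmetry and nondegeneracy of the total form, this forces $B_n$ to be antisymmetric and nondegenerate, and in particular $n_\tau$ to be even. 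Since $\mathrm{Id}_{V_\tau}\otimes A_f$ preserves $B_V\otimes B_n$ exactly when $A_f$ preserves $B_n$, we obtain $A_f\in\Sp(B_n)\cong\Sp(n_\tau,\R)$.

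Finally, because $\Sp(n_\tau,\R)$ is an algebraic (Zariski-closed) subgroup of $\mathrm{GL}(W_\tau)$, the inclusion $\tilde\alpha(\Aff(X))|_{W_\tau}\subseteq\Sp(n_\tau,\R)$ passes to the Zariski closure, which yields the claim. I expect the main obstacle to be justifying the commutation of the two actions cleanly (the flatness of the $G$-action along the orbit), while the conceptual heart is the Frobenius--Schur step: it is precisely the real type $D_\tau\simeq\R$ that makes $B_V$ symmetric and hence $B_n$ symplectic, separating this case from the complex- and quaternionic-type cases, which would instead produce unitary or orthogonal constraints.
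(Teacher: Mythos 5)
Your overall architecture is the right one, and it is in fact the mechanism behind the cited result: the paper does not prove Proposition~\ref{PropMYZ} itself but quotes Proposition 3.16 of \cite{MYZ}, whose proof runs exactly through Schur's lemma ($\mathrm{End}_G(W_\tau)\simeq M_{n_\tau}(D_\tau)$), the factorization of the restricted intersection form as $B_V\otimes B_n$, and the Frobenius--Schur dichotomy forcing $B_V$ symmetric and hence $B_n$ symplectic in the real case. However, there is a genuine gap at precisely the step you flagged, and your proposed justification for it is incorrect. The homological actions of $\Aff(X)$ and $G=\Aut(X)$ do \emph{not} commute in general. An automorphism $g$ does extend flatly along the $\SL(2,\R)$-orbit, but parallel transport around a loop corresponding to $f\in\Aff(X)$ returns $g$ not to itself but to $fgf^{-1}$: since $\tilde\alpha$ is a homomorphism and $\Aut(X)$ is a \emph{normal} subgroup of $\Aff(X)$ (it is the kernel of the derivative homomorphism $\Aff(X)\to\SL(X)$), one has $\tilde\alpha(f)\rho(g)\tilde\alpha(f)^{-1}=\rho(fgf^{-1})$, which differs from $\rho(g)$ whenever conjugation by $f$ induces a nontrivial automorphism of $G$. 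So "the matrix of $g_*$ in a flat frame does not depend on the point of the orbit" fails around loops; $\tilde\alpha(f)$ is in general not a $G$-module map, need not preserve the isotypical decomposition (it permutes the components according to the induced action on $\mathrm{Irr}(G)$), and even when it stabilizes $W_\tau$ it lies a priori only in the \emph{normalizer} of $\rho(G)|_{W_\tau}$, which strictly contains the centralizer $M_{n_\tau}(\R)^\times$ your argument requires.

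The repair is group-theoretic rather than geometric, and it is how \cite{MYZ} set things up: the conjugation homomorphism $\Aff(X)\to\Aut(G)$ has finite image because $G$ is finite, so its kernel --- the affine maps whose homological action genuinely centralizes $\rho(G)$ --- has finite index in $\Aff(X)$; moreover an element inducing an inner automorphism $c_g$ can be corrected by composing with the finite-order symplectic map $\rho(g)^{-1}$. In \cite{MYZ} one first shows the affine group permutes the isotypical components and then works with the (finite-index) stabilizer of $W_\tau$, which is also why the clean containment strictly concerns that subgroup; this costs nothing at the level of identity components of Zariski closures, which is all the present paper uses for its "upper bounds." With that substitution in place, the rest of your sketch is correct and coincides with the cited proof: self-duality of a real-type $\tau$ makes the form nondegenerate on $W_\tau$; uniqueness of the invariant form on $V_\tau$ gives the factorization $B_V\otimes B_n$ with $B_V$ symmetric, hence $B_n$ antisymmetric nondegenerate and $n_\tau$ even; and algebraicity of $\Sp(B_n)$ lets the inclusion pass to the Zariski closure.
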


In fact, Proposition 3.16 of \cite{MYZ} says much more depending on the type of division algebra, but in this article we only obtain real division algebras. This result allows us to give an ``upper bound" on the Kontsevich--Zorich monodromy groups.

Additionally we use the following result of \cite{MYZ} that allows us to gain information about the Lyapunov exponents from the isotypical components.

\begin{prop}\label{prop_MYZ_exp}
Let  $W_\tau=V_{\tau} ^{n_\tau}$ be an isotypical component.
The multiplicity in $W_\tau$ of each Lyapunov exponent is a multiple of the dimension (over $\mathbb R$) of $V_\tau.$

In particular, if $n_\tau$ is 2, then the multiplicity of a Lyapunov exponent in $W_\tau$ is the maximum possible, i.e.\ the dimension of $V_\tau.$
\end{prop}

The previous result may be informally explained as follows. As we previously discussed, an isotypical component $W_\tau$ is preserved by a finite-index subgroup $\widetilde{\Aff}(X)$ of $\Aff(X)$, but the different subspaces that make up $W_\tau$ may be permuted by this action. This means that isotypical components are, in general, not strongly irreducible. Nevertheless, since $\widetilde{\Aff}(X)$ permutes the irreducible representations inside $W_\tau$, the Lyapunov spectrum gets carried from one irreducible representation inside $W_\tau$ to another. Thus, the Lyapunov spectrum inside $W_\tau$ ends up consisting of copies of the same Lyapunov spectrum multiple times.
 

\subsection{The Eskin--Kontsevich--Zorich formula} While computing individual Lyapunov exponents is, in general, extremely hard, it is known that the \emph{sum} of positive Lyapunov exponents is a rational number that can be explicitly found for the case of square-tiled surfaces \cite[Corollary 8]{MR3270590}. This formula is stated in terms of widths and heights of the horizontal cylinders that make up the square-tiled surface $X$ and the elements of $\SL(2, \mathbb{Z}) \cdot X$. While we will not explicitly state the formula here, we stress that this number can be quickly computed using \texttt{surface\_dynamics}.

\subsection{Notations} In order to compute the Zariski closure of the Kontsevich--Zorich monodromy groups, we will need the automorphism group and the affine diffeomorphism group of a square-tiled surface. We will denote the generators of the automorphism groups by $\pi_i$ and the generators of the affine diffeomorphism group by products of $T=\begin{pmatrix} 1 & 1 \\ 0 & 1\end{pmatrix}$ and $S=\begin{pmatrix} 1 & 0 \\ 1 & 1\end{pmatrix}.$ We will denote the representation of the automorphism group on the homology by $\rho$ and affine diffeomorphism by $\tilde{\alpha},$ the representation of the affine diffeomorphisms on the zero-holonomy subspace by $\alpha.$

\section{On the 3-cover of the octahedron}\label{sec: octa}
In this section, we study the threefold translation cover of the octahedron as a square-tiled surface, and compute the Kontsevich--Zorich monodromy group of the translation surface.

\subsection{Unfolding of the octahedron as a square-tiled surface}

The regular octahedron has cone angle $\frac{4\pi}{3}$ at each vertex. Its unfolding 3-fold cover is a translation surface of genus four. We apply appropriate shear and dilation maps (Figure~\ref{fig: shear}) and consider its associated square-tiled surface along with a basis of homology. The squares are indexed by numbers from 1 to 24, and the squares above and to the right of square $i$ are denoted by $\sigma_v(i)$ and $\sigma_h(i)$ respectively.

In this section, we study the surface $O$, which is the image under $T^2$ of the translation cover from\cite{AAH}. The monodromy group is the same for any surface in the orbit. By the \texttt{surface\_dynamics} package, we get $$\SL(O)=\langle T^3, S^{-1}T \rangle < \SL(2,\Z).$$ We will use these generators to compute the Kontsevich--Zorich monodromy group of the translation cover of the octahedron. 

\begin{figure}[htbp] 
	\centering
    \includegraphics[width=6in]{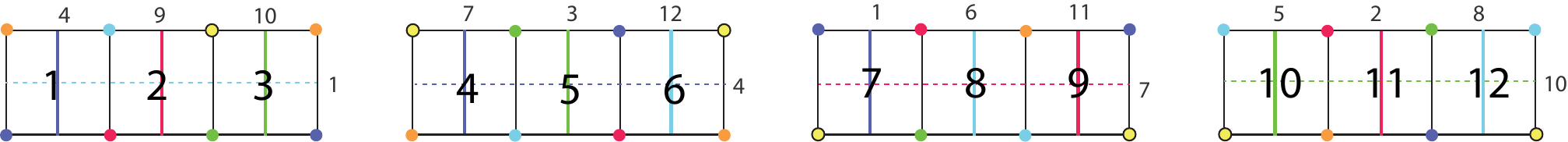}
    \caption{$O$ along with a basis of homology} 
    \label{fig:M4}
\end{figure}

\subsection{Basis of homology and zero-holonomy}
The absolute homology of $O$ is 8-di\-men\-sion\-al and an explicit basis can be given by the horizontal curves of $O$ that we denote as $\sigma_i$ where $i=1,4,7,10$, that begin on the left side of square $i$ along with the vertical curves of $O$ that we denote as $\zeta_j$ where $j=1,2,3,6$, that begin on the bottom side of square $j$ (Figure~\ref{fig:M4}). The holonomy vectors of $\sigma_i$ and $\zeta_j$ on $O$ are $\begin{pmatrix}3 \\ 0\end{pmatrix}$ and $\begin{pmatrix} 0 \\ 3\end{pmatrix},$ respectively.

With $\Sigma_i  \colonequals \sigma_i  - \sigma_{10}$ and $Z_j \colonequals \zeta_1  - \zeta_j,$ we define $\{ \Sigma_i, Z_j \}$ as the basis of the zero-holonomy subspace of the homology.

\subsection{Intersection form} 
We record the intersection matrix encoding the algebraic intersection form of $O$ with respect to the basis  given above. The intersection matrix of $O,$ which we denote by $\Omega$ is given by 
	\[
		\Omega= \left(\begin{array}{c|c} 
			 \mbox{\Large $0$} & \begin{matrix}
				1 & 1 & 1 & 0  \\
				1 & 0 & 1 & 1\\
				1 & 1 & 0 & 1 \\
				0 & 1 & 1 & 1  \\
			\end{matrix} \\[0.5ex] & \\[-5.5ex] & \\[0.5ex] \hline & \\[-2ex]
			 \begin{matrix}
				-1 & -1 & -1 & 0  \\
				-1 & 0 & -1 & -1\\
				-1 & -1 & 0 & -1 \\
				0 & -1 & -1 & -1  \\
			\end{matrix} & \mbox{\Large $0$} 
		\end{array}\right)
	.\]
	
\subsection{Action of the automorphism group on homology}\label{subsec: action of aut}

By \texttt{surface\_dynamics}, we get $\Aut(O)\simeq A_4,$ the alternating group on 4 elements, that is generated by the permutations 
\begin{align*}
\pi_1 & = (1,3,2)(4,10,9)(5,11,7)(6,12,8),\\
\pi_2 & = (1,6,11)(2,4,12)(3,5,10)(7,8,9).
\end{align*}

Figure~\ref{fig:pi1 on M4} describes the action of $\pi_1$ on $O$:
\begin{figure}[htbp] 
	\centering
    \includegraphics[width=4.5in]{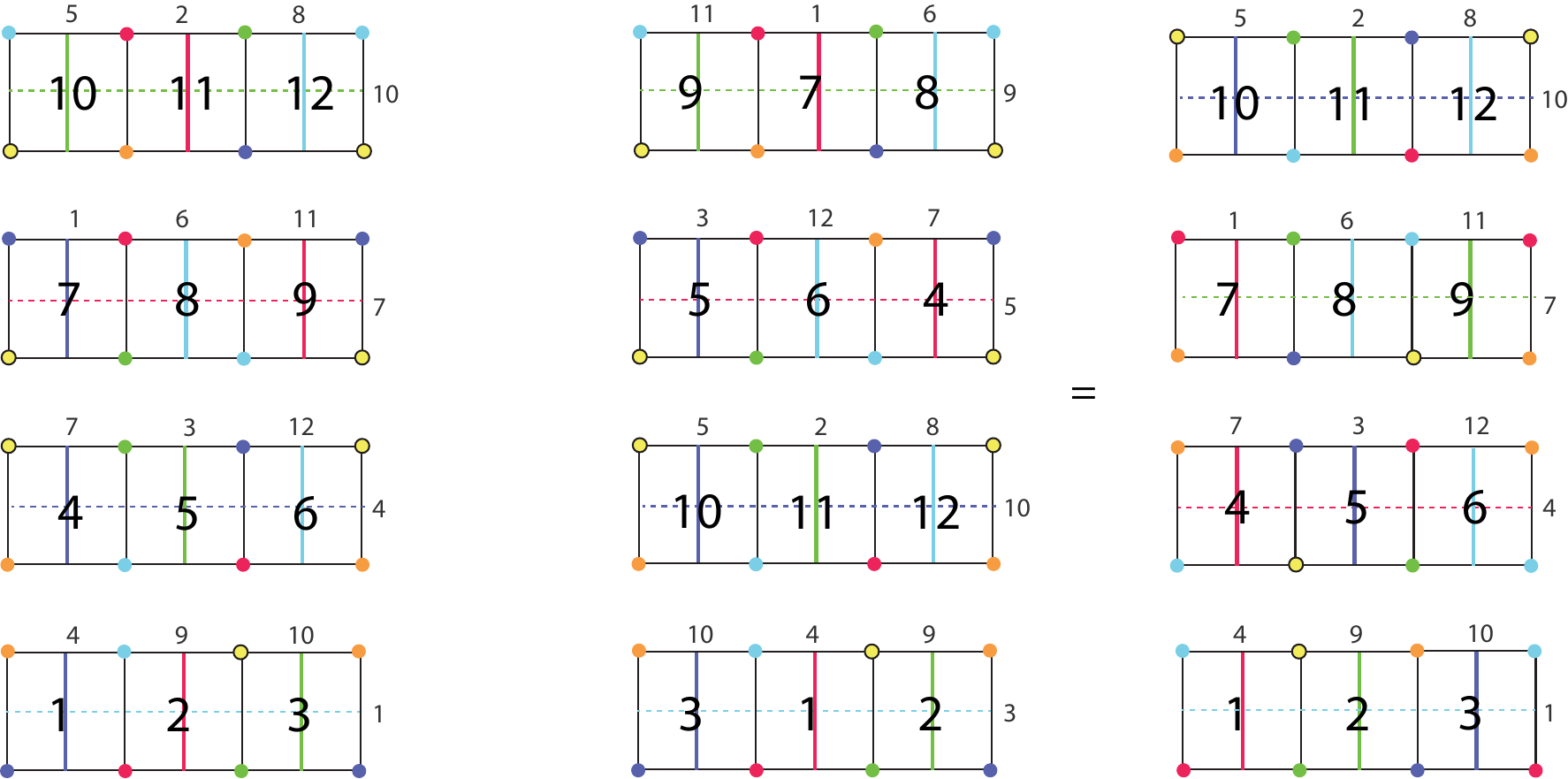}
    \caption{$O$ (left), action of $\pi_1$ on $O$ (center), $\pi_1(O)$ after cut-and-paste (right).}
    \label{fig:pi1 on M4}
\end{figure}

By observation, see Figure~\ref{fig:pi1 on M4}, we have
\begin{alignat*}{2}
\sigma_1&\mapsto \sigma_1, \qquad & \zeta_1&\mapsto\zeta_3,\\
\sigma_4&\mapsto \sigma_{10}, \qquad & \zeta_2&\mapsto\zeta_1,\\
\sigma_7&\mapsto \sigma_4, \qquad & \zeta_3&\mapsto\zeta_2,\\
\sigma_{10}&\mapsto \sigma_7, \qquad & \zeta_6&\mapsto\zeta_6.
\end{alignat*}

Denote by $\rho$ the representation arising from the action of $\Aut(O)$ on $H_1(O;\R).$ Then the action of $\pi_1$ on $H_1(O;\R)$ with respect to the ordered basis $\{\sigma_1,\sigma_4,\sigma_7,\sigma_{10},\zeta_1,\zeta_2,\zeta_3,\zeta_6\}$ is given by
	\[
		\rho(\pi_1)  = \left(\begin{array}{c|c} 
			\begin{matrix}
				1 & 0 & 0 & 0   \\
				0 & 0 & 1 & 0 \\
				0 & 0 & 0 & 1 \\
			       0 & 1 & 0 & 0 \\
			\end{matrix}  & \mbox{\Large $0$} \\[0.5ex] & \\[-5.5ex] & \\[0.5ex] \hline & \\[-2ex]
			 \mbox{\Large $0$}  & 			\begin{matrix}
				0 & 1 & 0 & 0   \\
				0 & 0 & 1 & 0 \\		
				1 & 0 & 0 & 0 \\
			       0 & 0 & 0 & 1 \\
			\end{matrix}  
		\end{array}\right).
	\]
A similar computation shows the action of  $\pi_2$ on $H_1(O;\R)$ :
	\[
		\rho(\pi_2)  = \left(\begin{array}{c|c} 
			\begin{matrix}
			0 & 0 & 0 & 1   \\
			1 & 0 & 0 & 0 \\
			0 & 0 & 1 & 0 \\
			 0 & 1 & 0 & 0 \\
			\end{matrix}  & \mbox{\Large $0$} \\[0.5ex] & \\[-5.5ex] & \\[0.5ex] \hline & \\[-2ex]
			 \mbox{\Large $0$}  & 
			 \begin{matrix}
			0 & 1 & 0 & 0 \\
			0 & 0 & 0 & 1 \\
			0 & 0 & 1 & 0 \\		
			1 & 0 & 0 & 0 \\
			\end{matrix}  
		\end{array}\right).
	\]

\subsection{Action of the affine group on homology and monodromy of the 3-cover of the octahedron}
In this section, we compute the action of $\Aff(O)$ on the absolute homology of $O$. As a corollary of our computations, we obtain generators for the Kontesevich--Zorich monodromy group of $O$.

Recall $\tilde \alpha\colon \Aff(O)\to \Sp(8,\mathbb R)$ denotes the representation arising from the action of the affine diffeomorphisms on $O$. In what follows, we actually compute the action of the Veech group and note that all the calculations and matrices only make sense up to the action of $\Aut(O)$. Let $\alpha\colon \Aff(O)\to \Sp(6,\mathbb R)$ denote the action on the zero-holonomy subspace.

The main result in this section is the following:

\begin{thm}
The Kontsevich--Zorich monodromy group of $O$ is generated by the following two matrices
	\[
		\alpha(T^3)  = \left(\begin{array}{c|c} 
			 \mbox{\Large $\mathrm{Id}_3$} & \begin{matrix}
				0 & 0 &  1   \\
				1 & 0 & 0 \\
				0 & 1 &  0 \\
			\end{matrix}  \\[0.5ex] & \\[-5.5ex] & \\[0.5ex] \hline & \\[-2ex]
		\mbox{\Large $0$} & 	\mbox{\Large $\mathrm{Id}_3$}
		\end{array}\right) \qquad \text{and} \qquad 
\alpha(S^{-1}T)  = \left(\begin{array}{c|c} 
			 \begin{matrix}
				-1 & -1 &  -1   \\
				0 & 0 & 1 \\
				1 & 0 &  0 \\
			\end{matrix} & \begin{matrix}
				1 & 0 &  1   \\
				0 & 0 & 0 \\
				0 & 0 &  -1 \\
			\end{matrix}  \\[0.5ex] & \\[-5.5ex] & \\[0.5ex] \hline & \\[-2ex]
		\begin{matrix}
				0 & 0 & 1   \\
				1 & 0 & 0 \\
				-1 & 0 &  -1 \\
			\end{matrix} & 	\mbox{\Large $0$}
		\end{array}\right).
	\]
\end{thm}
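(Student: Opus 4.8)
The plan is to obtain the two matrices as the homological action of explicit affine lifts of the Veech-group generators, and then to argue separately that these two lifts already generate the image of the full affine group on the zero-holonomy subspace. For the first step I would realize $T^3$ and $S^{-1}T$ as elements of $\Aff(O)$: since both lie in $\SL(O)$, applying the corresponding linear map to the square-tiled surface and then cutting and pasting returns $O$ up to a relabeling of the squares (that is, up to an element of $\Aut(O)$), exactly as in the $\pi_1$ computation illustrated in Figure~\ref{fig:pi1 on M4}. Tracking the images of $\sigma_1,\sigma_4,\sigma_7,\sigma_{10}$ and $\zeta_1,\zeta_2,\zeta_3,\zeta_6$ through this cut-and-paste yields $8\times 8$ matrices $\tilde\alpha(T^3)$ and $\tilde\alpha(S^{-1}T)$. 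For $T^3$ the derivative is a horizontal shear, so it fixes the horizontal classes and sends each vertical class to itself plus horizontal classes; for $S^{-1}T$ both directions mix, so the bookkeeping is heavier. I would then restrict to $H_1^{(0)}(O)$ via $\Sigma_i = \sigma_i - \sigma_{10}$ and $Z_j = \zeta_1 - \zeta_j$, rewrite the action in this basis, and recover the two $6\times 6$ matrices in the statement; as a consistency check I would confirm each matrix preserves the intersection form $\Omega$ restricted to the zero-holonomy subspace (and cross-check against \texttt{surface\_dynamics}).

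The second, more conceptual, step is to justify the word ``generated''. A priori the group $\alpha(\Aff(O))$ is generated by the lifts $\alpha(T^3),\alpha(S^{-1}T)$ \emph{together with} the automorphism images $\alpha(\pi_1),\alpha(\pi_2)$, because of the extension $1\to \Aut(O)\to \Aff(O)\to \SL(O)\to 1$ and the identity $\SL(O)=\langle T^3, S^{-1}T\rangle$. To eliminate the automorphism generators I would exploit the relations of the Veech group: the element $S^{-1}T=\begin{pmatrix}1&1\\-1&0\end{pmatrix}$ is elliptic, with $(S^{-1}T)^3=-\mathrm{Id}$ and $(S^{-1}T)^6=\mathrm{Id}$ in $\SL(2,\Z)$, so the word $(S^{-1}T)^6$ lifts to a nontrivial element of the kernel $\Aut(O)$. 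Evaluating such lifted relations as words in $\alpha(T^3)$ and $\alpha(S^{-1}T)$ therefore produces elements of $\alpha(\Aut(O))$; using a presentation of the finite-index subgroup $\SL(O)<\SL(2,\Z)$ to list its relations, I would verify that these words generate all of $\alpha(\Aut(O))$, so that the two displayed matrices indeed generate the whole Kontsevich--Zorich monodromy group.

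The main obstacle I anticipate is the cut-and-paste combinatorics of the first step, and in particular controlling the automorphism ambiguity in each lift: the affine map is only well-defined up to $\Aut(O)$, so the same geometric shear can be recorded by matrices differing by $\rho(\pi_1),\rho(\pi_2)$, and a consistent convention must be fixed to make the two matrices and the generation argument compatible. Concretely, the cyclic permutation block $\begin{pmatrix}0&0&1\\1&0&0\\0&1&0\end{pmatrix}$ appearing in $\alpha(T^3)$, rather than a plain scalar shear, is exactly an artifact of this relabeling, and getting the analogous permutations correct for $S^{-1}T$ is where the care will be required.
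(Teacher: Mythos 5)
Your computational step coincides with the paper's own proof. There, the authors compute $\tilde\alpha(T^3)$ and $\tilde\alpha(S^{-1}T)$ precisely because these two elements generate $\SL(O)$, reading the action off the sheared-and-reassembled picture; the only cosmetic difference is bookkeeping: rather than tracking every curve directly through the cut-and-paste, they write an unknown image class such as $\gamma=\tilde\alpha(T^3)(\zeta_1)$ as $\sum_i a_i\sigma_i+\sum_j b_j\zeta_j$ and solve the linear system obtained by counting intersection numbers of $\gamma$ against all eight basis curves via $\Omega$, then restrict to $\{\Sigma_i,Z_j\}$ exactly as you propose. Your second step is a genuine departure: the paper nowhere addresses the extension $1\to\Aut(O)\to\Aff(O)\to\SL(O)\to 1$, contenting itself with the disclaimer that all matrices ``only make sense up to the action of $\Aut(O)$,'' and justifying the word ``generated'' solely by the fact that $T^3$ and $S^{-1}T$ generate the Veech group. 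Your relation-lifting argument would, if carried out, justify that word literally; your identity $(S^{-1}T)^3=-\mathrm{Id}$ is correct (the trace is $1$, so $S^{-1}T$ has order six), and the verification is a finite check since $\alpha(\Aut(O))\simeq A_4$ acts faithfully on $H_1^{(0)}(O)$. Two refinements are needed, though: first, the lifted relation words are only guaranteed to generate, as a \emph{normal} subgroup of $\langle\alpha(T^3),\alpha(S^{-1}T)\rangle$, the intersection of that group with $\alpha(\Aut(O))$, so you must close under conjugation; second, it is not automatic that this intersection is all of $\alpha(\Aut(O))$ --- that is exactly what your computation must decide, and if it failed, the statement would have to be read as generation up to automorphisms. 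For the paper's subsequent Zariski-closure theorem the distinction is immaterial either way, since the Matheus--Yoccoz--Zmiaikou upper bound constrains all of $\tilde\alpha(\Aff(O))$ while the lower bound uses only the two displayed matrices. Finally, a small correction to your closing remark: the cyclic block in $\alpha(T^3)$ is a basis-pairing artifact, not an $\Aut(O)$-relabeling artifact --- reordering the $Z_j$ as $\{Z_6,Z_2,Z_3\}$ turns that block into $\mathrm{Id}_3$, whereas composing with any nontrivial $\alpha(\pi)$ cannot, because $\alpha(\pi)$ is block-diagonal for the splitting of $H_1^{(0)}(O)$ into the span of the $\Sigma_i$ and the span of the $Z_j$ and acts faithfully there, so it would necessarily disturb the identity diagonal blocks.
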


\begin{proof}
It suffices to compute the representation $\tilde\alpha\colon \Aff(O) \to \Sp(8,\R)$ and restrict to a basis of the zero-holonomy subspace. We begin by computing the generators $\tilde\alpha(T^3)$  and $\tilde\alpha(S^{-1}T)$ because $T^3$ and $S^{-1}T$ generate the Veech group of $O$.

Figure~\ref{fig:T^3(M4)} describes the action of $T^3$ on $O.$ Since $T^3$ is a horizontal shear map, all horizontal curves are mapped to themselves. 

\begin{figure}[htbp] 
	\centering
    \includegraphics[width=3in]{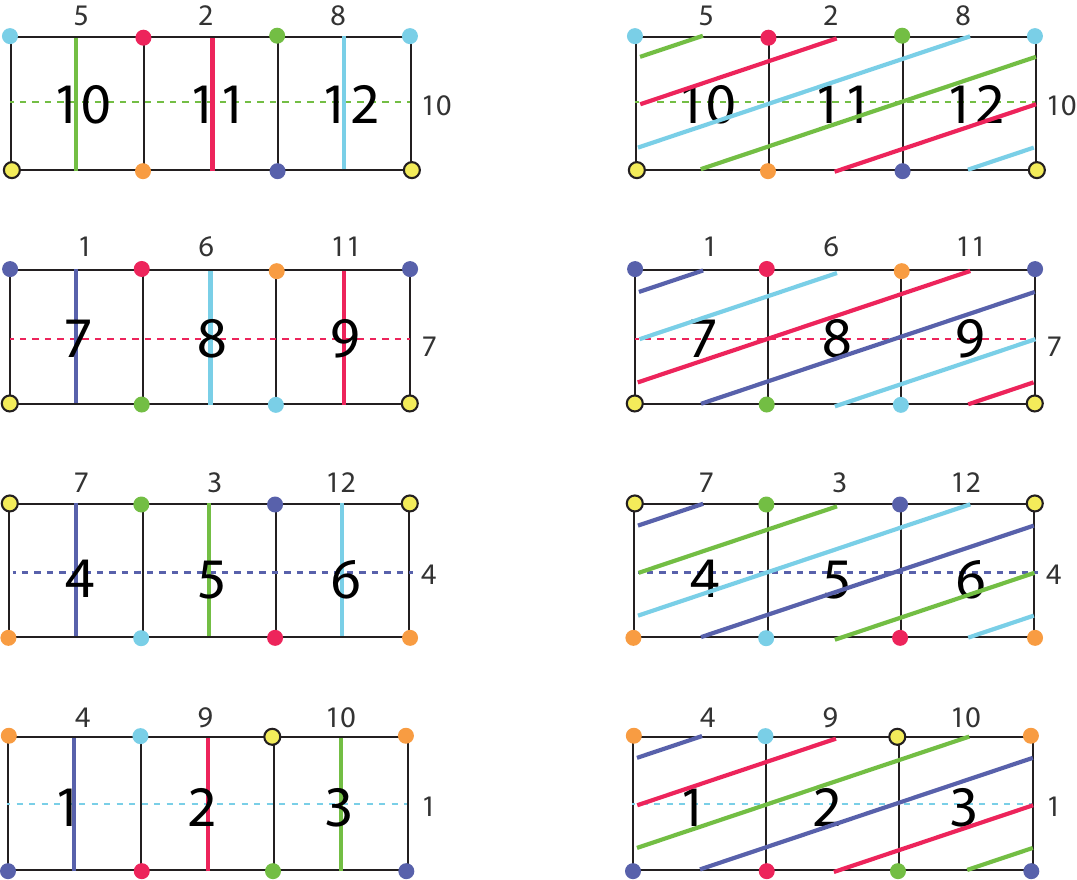}
    \caption{$O$ (left) and $T^3(O)$ (right)}
    \label{fig:T^3(M4)}
\end{figure}

To compute the image of the vertical curves under $T^3,$ we take advantage of the intersection matrix $\Omega.$ For example, write  $\gamma \colonequals \tilde \alpha(T^3)(\zeta_1)$ as a linear combination of the basis vectors. That is $\gamma = \sum a_i \sigma_i + b_j \zeta_j$ for some $a_i$ and $b_j.$  Then, $\langle \gamma,\sigma_1\rangle =\sum_j \langle \zeta_j,\sigma_1\rangle = -b_1-b_2-b_3.$ Thus, by counting the intersection number $ \langle \gamma,\sigma_1\rangle,$ we obtain equation on $a_i, b_j$ that determine $\gamma$. By counting the intersection number between $\gamma$ and all other curves, we obtain the following system of equations:

\begin{align*}
   -1 = \langle \gamma,\sigma_1\rangle &=-b_1-b_2-b_3\\
   -1 = \langle \gamma,\sigma_4\rangle &=-b_1-b_3-b_6\\
   -1 = \langle \gamma,\sigma_7\rangle &=-b_1-b_2-b_6\\
   0 = \langle \gamma,\sigma_{10}\rangle &=-b_2-b_3-b_6\\
    3 = \langle \gamma,\zeta_1\rangle &=a_1+a_4+a_7\\
    2 = \langle \gamma,\zeta_2\rangle &=a_1+a_7+a_{10}\\
    2 = \langle \gamma,\zeta_3\rangle &=a_1+a_4+a_{10}\\
    2 = \langle \gamma,\zeta_6\rangle &=a_4+a_7+a_{10}
\end{align*}

Solving this system yields $\tilde \alpha(T^3)(\zeta_1) = \sigma_1+\sigma_4+\sigma_7+\zeta_1.$

We repeat this for the rest of the curves and obtain the following:
\begin{align*}
\tilde \alpha(T^3) (\sigma_i) &= \sigma_i, \, i=1,4,7,10,\\
\tilde \alpha(T^3)(\zeta_1) &= \sigma_1+\sigma_4+\sigma_7+\zeta_1,\\
\tilde \alpha(T^3)(\zeta_2) &= \sigma_1+\sigma_7+\sigma_{10}+\zeta_2,\\
\tilde \alpha(T^3)(\zeta_3) &= \sigma_1+\sigma_4+\sigma_{10}+\zeta_3,\\
\tilde \alpha(T^3)(\zeta_6) &= \sigma_4+\sigma_7+\sigma_{10}+\zeta_6.
\end{align*}

We use the above to compute $\alpha\colon \Aff(O) \to \Sp(6,\R),$ the action of the affine group on the zero-holonomy subspace. We define the basis of the subspace by $\Sigma_i  \colonequals \sigma_i  - \sigma_{10}$ for $i=1,4,7$ and $Z_j \colonequals \zeta_1  - \zeta_j $ for $j=2,3,6$.

Then, by definition of $\Sigma_i$ and the action of $T^3$ on $\sigma_i$, we have,
\begin{align*}
\alpha(T^3)(\Sigma_i)&=\tilde\alpha(T^3)(\sigma_i-\sigma_{10})=\tilde\alpha(T^3)(\sigma_i)-\tilde\alpha(T^3)(\sigma_{10})\\
&=\sigma_i-\sigma_{10}=\Sigma_i.
\end{align*}

For $Z_j$, $j=2,3,6$, take $Z_2$ for example, we obtain
\begin{align*}
\alpha(T^3)(Z_2)&=\tilde\alpha(T^3)(\zeta_1-\zeta_2)\\
&=\tilde\alpha(T^3)(\zeta_1)-\tilde\alpha(T^3)(\zeta_2)\\
&=(\zeta_1+\sigma_1+\sigma_4+\sigma_7)-(\zeta_2+\sigma_1+\sigma_7+\sigma_{10})\\
&=\sigma_4-\sigma_{10}+\zeta_1 -\zeta_2\\
&=\Sigma_4+Z_2.
\end{align*}
Similarly,  $\alpha(T^3)(Z_3)=\Sigma_7+Z_3$ and $\alpha(T^3)(Z_6)=\Sigma_1 +Z_6$.

Hence, with respect to the ordered basis $\{\Sigma_1,\Sigma_4,\Sigma_7,Z_2,Z_3,Z_6\}$ we have the matrix $\alpha(T^3)$ as stated in the theorem.

A similar idea shows the action of $S^{-1}T$ acts on the basis of homology as:
\begin{align*}
\tilde\alpha(S^{-1}T)(\sigma_1) &= \sigma_7-\zeta_3,\\  
\tilde\alpha(S^{-1}T)(\sigma_4) &= \sigma_{10}-\zeta_1,\\   
\tilde\alpha(S^{-1}T)(\sigma_7) &= \sigma_4-\zeta_2,\\   
\tilde\alpha(S^{-1}T)(\sigma_{10}) &= \sigma_1-\zeta_6,\\   
\tilde\alpha(S^{-1}T)(\zeta_1) &= \sigma_1,\\
\tilde\alpha(S^{-1}T)(\zeta_2) &= \sigma_{10},\\
\tilde\alpha(S^{-1}T)(\zeta_3) &= \sigma_4,\\
\tilde\alpha(S^{-1}T)(\zeta_6) &= \sigma_7.
\end{align*}

Using the above, we compute the action of $S^{-1}T$ on the zero-holonomy subspace as we do for $T^3$.
\end{proof}

\subsection{Identification of the Kontsevich--Zorich monodromy group}
In this section, we use the representations arising from the automorphisms $\rho$ and affine diffeomorphisms $\alpha$ to identify the Kontsevich--Zorich monodromy group.

 \begin{thm}
 The Zariski closure of the monodromy group of the translation cover of the octahedron is
 $$\overline{ \alpha(\Aff(O))} ^{\Zar}\simeq \Sp(2,\R).$$
 \end{thm}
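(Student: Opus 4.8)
The plan is to establish the two inclusions $\overline{\alpha(\Aff(O))}^{\Zar}\subseteq\Sp(2,\R)$ and $\overline{\alpha(\Aff(O))}^{\Zar}\supseteq\Sp(2,\R)$ separately: the first as a representation-theoretic ``upper bound'' coming from Proposition~\ref{PropMYZ}, and the second as a ``lower bound'' obtained by computing the dimension of the Lie algebra of the Zariski closure. Since $\Sp(2,\R)\cong\SL(2,\R)$ is connected and $3$-dimensional, showing that the closure is contained in $\Sp(2,\R)$ and that its Lie algebra already has dimension $3$ will force equality.

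For the upper bound I would decompose $H_1(O;\R)$ as an $A_4$-module. The relevant real irreducibles of $A_4$ are the trivial representation (dimension $1$), a representation of complex type (dimension $2$, with $D_\tau\cong\C$), and the standard representation $\tau$ (dimension $3$, of real type, so $D_\tau\cong\R$). Using the matrices $\rho(\pi_1),\rho(\pi_2)$ already computed, one reads off the character of $\rho$: $\chi(e)=8$ and $\chi(\pi_1)=\chi(\pi_2)=2$ on the two classes of $3$-cycles. Taking a representative of the order-two class, e.g.\ $\pi_1\pi_2$, a short computation shows it acts as a double transposition on both $\{\sigma_1,\sigma_4,\sigma_7,\sigma_{10}\}$ and $\{\zeta_1,\zeta_2,\zeta_3,\zeta_6\}$, so its homological trace is $0$. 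The orthogonality relations then give $H_1(O;\R)\cong\mathrm{triv}^{2}\oplus\tau^{2}$. Because any automorphism $f$ of the square-tiled surface satisfies $f^*\omega=\omega$, it fixes the classes $\operatorname{Re}\omega,\operatorname{Im}\omega$ and hence acts trivially on $H_1^{\mathrm{st}}(O)$; thus $H_1^{\mathrm{st}}(O)=\mathrm{triv}^{2}$ and the zero-holonomy subspace is the single isotypical component $H_1^{(0)}(O)=W_\tau=V_\tau^{2}$ with $n_\tau=2$ and $D_\tau\cong\R$. Proposition~\ref{PropMYZ} then yields $\overline{\alpha(\Aff(O))}^{\Zar}\subseteq\Sp(2,\R)$.

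For the lower bound, let $G=\overline{\alpha(\Aff(O))}^{\Zar}$ and let $\mathfrak g$ be its Lie algebra; I want $\dim\mathfrak g\ge 3$. The generator $A=\alpha(T^3)$ is unipotent with $(A-I)^2=0$, so $N\colonequals A-I=\log A$ is a nonzero nilpotent element of $\mathfrak g$. Since $\mathfrak g$ is invariant under $\mathrm{Ad}(G)$ and $B\colonequals\alpha(S^{-1}T)\in G$, both $\mathrm{Ad}(B)N=BNB^{-1}$ and the bracket $[N,\mathrm{Ad}(B)N]$ lie in $\mathfrak g$. I would then check by direct computation that these three $6\times6$ matrices are linearly independent, giving $\dim\mathfrak g\ge 3$. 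Combined with the inclusion $\mathfrak g\subseteq\mathfrak{sp}(2,\R)$ (which is $3$-dimensional) from the upper bound, this forces $\mathfrak g=\mathfrak{sp}(2,\R)$, hence $G^{\circ}=\Sp(2,\R)$; as $\Sp(2,\R)$ is connected and $G\subseteq\Sp(2,\R)$, we conclude $G=\Sp(2,\R)$.

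I expect the conceptual heart to be the upper bound: correctly identifying the isotypical decomposition of $H_1^{(0)}(O)$ and, crucially, verifying that the relevant irreducible $\tau$ is of real type so that $D_\tau\cong\R$ and Proposition~\ref{PropMYZ} applies. The lower bound, by contrast, reduces to a concrete but routine linear-algebra verification—computing $N$, its conjugate $\mathrm{Ad}(B)N$, and their bracket, and confirming linear independence—so the only care needed there is arithmetic accuracy.
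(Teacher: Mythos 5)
Your proposal is correct and follows essentially the same strategy as the paper: the ``upper bound'' via the isotypical decomposition $H_1(O;\R)\cong\mathrm{triv}^2\oplus\tau^2$ (with $\tau$ the standard $A_4$-representation, $n_\tau=2$, $D_\tau\cong\R$) combined with Proposition~\ref{PropMYZ}, and the ``lower bound'' by exhibiting three linearly independent elements of the Lie algebra starting from $\log\alpha(T^3)$. The only differences are cosmetic: you obtain the decomposition by character theory and absolute irreducibility of $\tau$ where the paper exhibits the invariant subspaces $Z_1,Z_2$ explicitly and computes the centralizer directly, and your third Lie algebra element is the bracket $[N,\mathrm{Ad}(B)N]$ rather than the paper's second conjugate $\phi_{\alpha(S^{-1}T)^2}(t)$ --- both reduce to the same routine verification.
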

 
 \begin{proof} 
We  begin by computing the isotypical components of $H_1(O;\R)$. Let $$V_1 = \text{span}_\R\{\sigma_i ^4\}_{i=1,4,7,10} \text{ and } 
V_2 = \text{span}_\R\{\zeta_j ^4\}_{j=1,2,3,6}.$$

Notice that $\rho(\Aut(O))$ preserves each $V_i$, $i=1,2$.  We focus on decomposing $V_1$ and note that analogous statements hold for $V_2$.

One can see that $\sum_i\sigma_i$ is as a simultaneous eigenvector for $\rho(\Aut(O))$ with eigenvalue 1 because it is an eigenvector for the generators of $\rho(\Aut(O))$ with eigenvalue 1. Let $E_1$ denote the span of this eigenvector. Then the orthogonal complement of $E_1$ inside of $V_1$ is a 3-dimensional $\rho(\Aut(O))$-invariant subspace of $V_1$ that we denote by $Z_1$. A basis is given by $Z_1 = \text{span}_\R\{\sigma_i-\sigma_{10}\}_{i=1,4,7}$.  Thus, $V_1 = E_1\oplus Z_1$.

Similarly, $V_2 = E_2\oplus Z_2$, where $E_2$ is the 1-dimensional simultaneous eigenspace given by the span of $\sum_j\zeta_j$ and $Z_2$ is the orthogonal complement of $E_2$ in $V_2.$ Note that the direct sum of the 1-dimensional subspaces is simply the tautological plane and so the orthogonal complement is given by the zero-holonomy subspace. Hence, the isotypical components of $H_1(O;\R)$ are given by $H_1^{\mathrm{st}}(O) = E_1\oplus E_2$ and $H_1^{(0)}(O) = Z_1\oplus Z_2.$ 

Since $H_1^{\mathrm{st}}(O) $ is the tautological subspace, the action of the monodromy group here is given by $\Sp(2,\R)$. It remains to compute $\tilde \alpha(\Aff(O))|_{H_1^{(0)}(O)} =  \alpha(\Aff(O))$.

To use the results of \cite{MYZ} we need to compute the centralizer of $\rho(\Aut(O))|_{Z_1}.$ Recall the matrices from Section~\ref{subsec: action of aut} and we get  \[
\rho(\pi_1)|_{Z_1}=
\left(
\begin{array}{ccc}
 1 & 0 & 0 \\
 0 & 0 & 1 \\
 -1 & -1 & -1 \\
\end{array}
\right)\text{ and }
\rho(\pi_2)|_{Z_1}=
\left(
\begin{array}{ccc}
 -1 & -1 & -1 \\
 1 & 0 & 0 \\
 0 & 0 & 1 \\
\end{array}
\right)
\]

Solving the system 
\begin{align*}
    \rho(\pi_1)|_{Z_1}X&=X\rho(\pi_1)|_{Z_1}\\
    \rho(\pi_2)|_{Z_1}X&=X\rho(\pi_2)|_{Z_1}
\end{align*} 
yields that $X$ is of the form $a\mathrm{Id}_3$ and so the centralizer of $\rho(\Aut(O))|_{Z_1}$ is  $\{a\mathrm{Id}_3:a\in \R\}\simeq \R$. Thus, this representation is real and by Proposition \ref{PropMYZ} we have $\overline{ \alpha(\Aff(O))} ^{\Zar}< \Sp(2,\R)$. This gives an ``upper bound" for the monodromy group.

Now we compute a ``lower bound." Since $\alpha(T^3)$ is parabolic, the logarithm $t = \log(\alpha(T^3))$ is in the Lie algebra of the algebraic group $\overline{ \alpha(\Aff(O))} ^{\Zar}$. 

Let $\phi_X$ denote the conjugation map $\phi_X(g)=XgX^{-1}$. A direct computation shows that the following three elements are linearly independent inside of the Lie algebra of $\overline{ \alpha(\Aff(O))} ^{\Zar},$
$$t, \phi_{\alpha(S^{-1}T)}(t),\phi_{(\alpha(S^{-1}T)^2)}(t).$$

Since $\overline{ \alpha(\Aff(O))} ^{\Zar}$ sits inside the 3-dimensional group $\Sp(2,\R)$ and the dimension has a lower bound of 3, we conclude the Zariski closure of the monodromy group of the translation cover of the octahedron is
 $$\overline{ \alpha (\Aff(O))} ^{\Zar}\simeq \Sp(2,\R).$$
 \end{proof}

 \subsection{Lyapunov exponents of the 3-cover of the octahedron}

We compute the Lyapunov spectrum of the 3-cover of the octahedron $O$.
\begin{prop}
Counting multiplicities, the positive Lyapunov spectrum of the translation cover of the octahedron is
$$\{1\} \cup \{1/2,1/2,1/2\},$$
where the union indicates the exponents corresponding to distinct, symplectically-orthogonal, irreducible pieces of the Hodge bundle.
\end{prop}
\begin{proof}
Recall we have the following decomposition of the homology of $O$ into isotypical components,  $$(E_1\oplus E_2)\oplus(Z_1\oplus Z_2).$$ 
 The space $E_1\oplus E_2$ is the tautological plane and carries a Lyapunov exponent of 1. The Eskin--Kontsevich--Zorich formula \cite{MR3270590} and the \texttt{surface\_dynamics} package yield that the sum of the Lyapunov exponents on $O$ must be 5/2. On the other hand, Proposition \ref{prop_MYZ_exp}, tells us that the multiplicity of the Lyapunov exponent in the remaining isotypical component $Z_1\oplus Z_2$ is 3. Putting these together yields that the remaining Lyapunov exponent is 1/2 with a multiplicity of 3.
\end{proof}


\section{On the 4-cover of the cube}\label{sec: cube}

\subsection{Unfolding of the cube}
The two permutations that define the 4-cover of the cube are given by 

\begin{align*} h_1 = (1,2,3,4)(5,6,7,8)(9,10,11,12)(13,14,15,16)(17,18,19,20)(21,22,23,24)\\
v_1 = (1,9,14,22)(2,20,13,7)(3,24,16,11)(4,5,15,18)(6,10,17,21)(8,23,19,12)\end{align*}

We call this genus-nine surface $C_1,$ whose Veech group is an index-9 subgroup of $\SL(2,\Z)$.

In this section, we study $C \colonequals T S^2 C_1$ (see Figure~\ref{fig:pi1(C2)}).

By the \texttt{surface\_dynamics} package, we have
$$\SL(C)=\left\langle \begin{pmatrix} 1 & 2 \\ 0 & 1\end{pmatrix}, \, \begin{pmatrix} 5 & -2 \\ 3 & -1\end{pmatrix}, \, \begin{pmatrix} 3 & -2 \\ 5 & -3\end{pmatrix}\right\rangle <\SL(2,\Z).$$ 


\subsection{Basis of homology and zero-holonomy}
For $C,$ we use the homology basis with horizontal curves $\gamma,$ vertical curves $\zeta,$ and slope 1 curves $\eta$ (see Figure \ref{fig:pi1(C2)}): $$\{\sigma_1, \sigma_2, \sigma_3, \sigma_5, \sigma_8, \sigma_9, \zeta_1, \zeta_2, \zeta_3, \zeta_4, \zeta_5, \zeta_8, \eta_1, \eta_2, \eta_3, \eta_4, \eta_6, \eta_8\}.$$

The holonomy of $\sigma_i$ are $\begin{pmatrix}2\\
0\end{pmatrix},$ $\zeta_j$ are $\begin{pmatrix}0\\
3\end{pmatrix},$ and $\eta_k$ are $\begin{pmatrix}4\\
4\end{pmatrix}.$ For the zero-holonomy subspace, we will use the following basis: \begin{alignat*}{2}
    \Sigma_i &\colonequals \sigma_i - \sigma_9, \qquad & i&=1,2,3,5,8;\\
    Z_j &\colonequals \zeta_j - \zeta_8, \qquad & j&=1,2,3,4,5;\\
    H_k &\colonequals \eta_k-2\sigma_9-\frac{4}{3}\zeta_8, \qquad & k&=1,2,3,4,6,8.
\end{alignat*}

\subsection{Intersection form}

We record the intersection matrix encoding the algebraic intersection form of $C$ with respect to the basis given above. The intersection matrix of $C$  is given by$$\begin{pmatrix} 0 & -A & -B\\
A^T & 0 & -C\\
B^T & C^T & 0\end{pmatrix}$$ where

$$A = \begin{pmatrix}1 & 0 & 0 & 0 & 0 & 0\\
0 & 1 & 0 & 0 & 0 & 0\\
0 & 0 & 1 & 0 & 0 & 1\\
0 & 0 & 0 & 0 & 1 & 0\\
0 & 0 & 0 & 0 & 0 & 1\\
0 & 0 & 0 & 0 & 0 & 0\end{pmatrix}, \qquad B = \begin{pmatrix}1 & 0 & 0 & 1 & 0 & 0\\
1 & 1 & 0 & 0 & 0 & 0\\
0 & 1 & 1 & 0 & 0 & 0\\
0 & 0 & 0 & 1 & 0 & 1\\
0 & 1 & 0 & 0 & 0 & 1\\
1 & 0 & 0 & 0 & 0 & 1\end{pmatrix},$$
and
$$ C = \begin{pmatrix} -1 & 0 & 0 & -1 & -1 & 0\\
-1 & -1 & 0 & 0 & -1 & 0\\
0 & -1 & -1 & 0 & -1 & 0\\
0 & 0 & -1 & -1 & -1 & 0\\
0 & 0 & -1 & -1 & 0 & -1\\
0 & -1 & -1 & 0 & 0 & -1\end{pmatrix}.$$

\subsection{Action on homology and monodromy of the 4-cover of the cube} \label{sec: cube aut}

The aim of this section is to compute generators for the action arising from the automorphism group.

Using \texttt{surface\_dynamics} we find that the automorphism group of $C$ is the subgroup of $S_{24}$ generated by $$\begin{array}{rl}\pi_1 & = (1,13)(2,14)(3,15)(4,16)(5,11)(6,12)(7,9)(8,10)(17,23)(18,24)(19,21)(20,22),\\
\pi_2 & = (1,14)(2,15)(3,16)(4,13)(5,7)(6,8)(9,22)(10,23)(11,24)(12,21)(17,19)(18,20),\\
\pi_3 & = (1,23,5)(2,24,6)(3,21,7)(4,22,8)(9,19,15)(10,20,16)(11,17,13)(12,18,14).\end{array}$$

In fact, $\Aut(C)$ is isomorphic to $S_4$.

\begin{figure}[htbp] 
	\centering
    \includegraphics[width=6in]{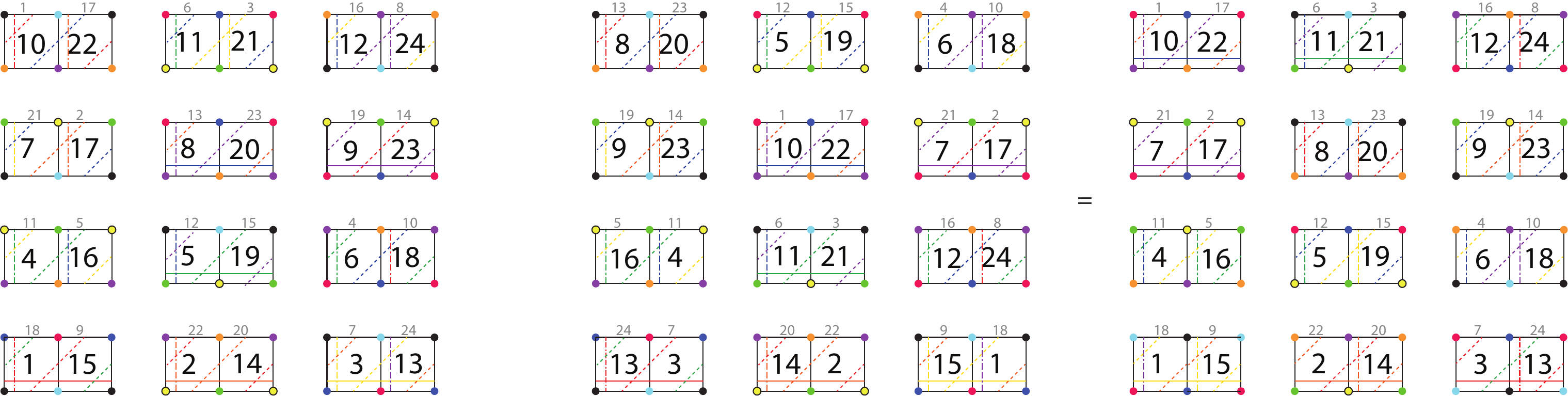}
    \caption{$C$ (left), action of $\pi_1$ on $C$ (center), $\pi_1(C)$ after cut-and-paste (right)}
    \label{fig:pi1(C2)}
\end{figure}

Figure~\ref{fig:pi1(C2)} shows $C$ and the action of $\pi_1$ on $C$ (opposite vertical sides are identified). By observation we see
\begin{align*}
    \sigma_1&\mapsto\sigma_3, & \sigma_2&\mapsto\sigma_2, & \sigma_3&\mapsto \sigma_1, & \zeta_1&\mapsto\zeta_8, & \zeta_4&\mapsto\zeta_5, & \zeta_5&\mapsto\zeta_4, & \zeta_8&\mapsto\zeta_1,\\
    \eta_1&\mapsto\eta_2, & \eta_2&\mapsto\eta_1, & \eta_3&\mapsto\eta_4, & \eta_4&\mapsto\eta_3, & \eta_6&\mapsto\eta_8, & \eta_8&\mapsto\eta_6.
\end{align*}

Using the intersection form for the remaining curves, we obtain
\begin{alignat*}{2}
\sigma_5&\mapsto{} & &{-2\sigma_1}+2\sigma_3+\sigma_9 -\frac{3}{2}\zeta_1 + \frac{1}{2}\zeta_2 -\frac{1}{2}\zeta_5+\frac{3}{2}\zeta_8+\frac{1}{2}\eta_1-\eta_2-\frac{1}{2}\eta_3+\eta_4+\frac{1}{2}\eta_6-\frac{1}{2}\eta_8,\\
\sigma_8&\mapsto{} & &4\sigma_1-\sigma_2-5\sigma_3-\sigma_5-\sigma_8-\sigma_9+3\zeta_1-2\zeta_2,-\zeta_3-\zeta_4+\zeta_5-4\zeta_8-{}\\
&&&\eta_1+3\eta_2+2\eta_3-2\eta_4+\eta_8,\\
\sigma_9&\mapsto{} & &{-\sigma_1}+\sigma_3+\sigma_5-\frac{1}{2}\zeta_1-\frac{1}{2}\zeta_2+\frac{1}{2}\zeta_5+\frac{1}{2}\zeta_8+\frac{1}{2}\eta_1-\frac{1}{2}\eta_3+\frac{1}{2}\eta_6-\frac{1}{2}\eta_8,\\
\zeta_2&\mapsto{} & &{-2\sigma_1}+2\sigma_3-2\zeta_1+\zeta_2+\zeta_4-\zeta_5+2\zeta_8+\eta_1-\eta_2-\eta_3+\eta_4,\\
\zeta_4&\mapsto{} & &{-2\sigma_1}+2\sigma_3-\zeta_1+\zeta_3+\zeta_8+\eta_1-\eta_2-\eta_3+\eta_4.
\end{alignat*}

Let $\rho$ denote the representation arising from the action of $\Aut(C)$ on $H_1(C;\R)\simeq Sp(18,\mathbb R)$. Then the action of $\pi_1$ on $H_1(C;\R)$ with respect to the ordered basis $$\{\sigma_1, \sigma_2, \sigma_3, \sigma_5, \sigma_8, \sigma_9, \zeta_1, \zeta_2, \zeta_3, \zeta_4, \zeta_5, \zeta_8, \eta_1, \eta_2, \eta_3, \eta_4, \eta_6, \eta_8\}$$ is given by
	\[
		\rho(\pi_1)  = \left(\begin{array}{c|c|c} 
			\begin{matrix}
				0 & 0 & 1 & -2 & 4 & -1   \\
				0 & 1 & 0 & 0 & -1 & 0 \\
				1 & 0 & 0 & 2 & -5 & 1\\
			    0 & 0 & 0 & 0 & -1 & 1 \\
			    0 & 0 & 0 & 0 & -1 & 0 \\
			    0 & 0 & 0 & 1 & -1 & 0 \\
			\end{matrix}
			& 
			\begin{matrix}
				0 & -2 & -2 & 0 & 0 & 0   \\
				0 & 0 & 0 & 0 & 0 & 0 \\
				0 & 2 & 2 & 0 & 0 & 0\\
			    0 & 0 & 0 & 0 & 0 & 0 \\
			    0 & 0 & 0 & 0 & 0 & 0 \\
			    0 & 0 & 0 & 0 & 0 & 0 \\
			\end{matrix}
			&
			\mbox{\Large $0$} 
			
			\\[0.5ex] & & \\[-5.5ex] & & \\[0.5ex] \hline & & \\[-2ex]
			\begin{matrix}
				0 & 0 & 0 & -\frac{3}{2} & 3 & -\frac{1}{2}   \\
				0 & 0 & 0 & \frac{1}{2} & -2 & -\frac{1}{2} \\
				0 & 0 & 0 & 0 & -1 & 0\\
			    0 & 0 & 0 & 0 & -1 & 0 \\
			    0 & 0 & 0 & -\frac{1}{2} & 1 & \frac{1}{2} \\
			    0 & 0 & 0 & \frac{3}{2} & -4 & \frac{1}{2} \\
			\end{matrix} 
			& 			
			\begin{matrix}
				0 & -2 & -1 & 0 & 0 & 1   \\
				0 & 1 & 0 & 0 & 0 & 0 \\
				0 & 0 & 1 & 0 & 0 & 0\\
			    0 & 1 & 0 & 0 & 1 & 0 \\
			    0 & -1 & 0 & 1 & 0 & 0 \\
			    1 & 2 & 1 & 0 & 0 & 0 \\
			\end{matrix}
			&\mbox{\Large $0$} 
			\\[0.5ex] & & \\[-5.5ex] & & \\[0.5ex] \hline & & \\[-2ex]
			
			\begin{matrix}
				0 & 0 & 0 & \frac{1}{2} & -1 & \frac{1}{2}   \\
				0 & 0 & 0 & -1 & 3 & 0 \\
				0 & 0 & 0 & -\frac{1}{2} & 2 & -\frac{1}{2}\\
			    0 & 0 & 0 & 1 & -2 & 0 \\
			    0 & 0 & 0 & \frac{1}{2} & 0 & \frac{1}{2} \\
			    0 & 0 & 0 & -\frac{1}{2} & 1 & -\frac{1}{2} \\
			\end{matrix}
			&
			\begin{matrix}
				0 & 1 & 1 & 0 & 0 & 0   \\
				0 & -1 & -1 & 0 & 0 & 0 \\
				0 & -1 & -1 & 0 & 0 & 0\\
			    0 & 1 & 1 & 0 & 0 & 0 \\
			    0 & 0 & 0 & 0 & 0 & 0 \\
			    0 & 0 & 0 & 0 & 0 & 0 \\
			\end{matrix}
			&
			\begin{matrix}
				0 & 1 & 0 & 0 & 0 & 0   \\
				1 & 0 & 0 & 0 & 0 & 0 \\
				0 & 0 & 0 & 1 & 0 & 0\\
			    0 & 0 & 1 & 0 & 0 & 0 \\
			    0 & 0 & 0 & 0 & 0 & 1 \\
			    0 & 0 & 0 & 0 & 1 & 0 \\
			\end{matrix}
		\end{array}\right).
	\]

A similar computation shows that the action of the rest of the generators is given by
	\[
		\rho(\pi_2)  = \left(\begin{array}{c|c|c} 
			\begin{matrix}
				0 & 1 & -1 & -1 & -1 & 4   \\
				1 & 0 & 1 & 0 & 0 & -1 \\
				0 & 0 & 1 & 1 & 1 & -5\\
			    0 & 0 & 0 & 1 & 0 & -1 \\
			    0 & 0 & 0 & 0 & 1 & -1 \\
			    0 & 0 & 0 & 0 & 0 & -1 \\
			\end{matrix}
			& 
			\begin{matrix}
				-2 & -2 & 0 & 0 & 0 & 0   \\
				0 & 0 & 0 & 0 & 0 & 0 \\
				2 & 2 & 0 & 0 & 0 & 0\\
			    0 & 0 & 0 & 0 & 0 & 0 \\
			    0 & 0 & 0 & 0 & 0 & 0 \\
			    0 & 0 & 0 & 0 & 0 & 0 \\
			\end{matrix}
			&
			\mbox{\Large $0$} 
			
    		\\[0.5ex] & & \\[-5.5ex] & & \\[0.5ex] \hline & & \\[-2ex]
			\begin{matrix}
				0 & 0 & -1 & -\frac{1}{2} & -\frac{3}{2} & 3   \\
				0 & 0 & 1 & -\frac{1}{2} & \frac{1}{2} & -2 \\
				0 & 0 & 0 & 0 & 0 & -1\\
			    0 & 0 & 0 & 0 & 0 & -1 \\
			    0 & 0 & -1 & \frac{1}{2} & -\frac{1}{2} & 1 \\
			    0 & 0 & 1 & \frac{1}{2} & \frac{3}{2} & -4 \\
			\end{matrix} 
			& 			
			\begin{matrix}
				-2 & -1 & 0 & 0 & 0 & 0   \\
				1 & 0 & 0 & 0 & 0 & 0 \\
				0 & 1 & 0 & 0 & 1 & 0\\
			    1 & 0 & 0 & 0 & 0 & 1 \\
			    -1 & 0 & 1 & 0 & 0 & 0 \\
			    2 & 1 & 0 & 1 & 0 & 0 \\
			\end{matrix}
			&\mbox{\Large $0$} 
			\\[0.5ex] & & \\[-5.5ex] & & \\[0.5ex] \hline & & \\[-2ex]
			
			\begin{matrix}
				0 & 0 & 0 & \frac{1}{2} & \frac{1}{2} & -1   \\
				0 & 0 & -1 & 0 & -1 & 3 \\
				0 & 0 & 0 & -\frac{1}{2} & -\frac{1}{2} & 2\\
			    0 & 0 & 1 & 0 & 1 & -2 \\
			    0 & 0 & 0 & \frac{1}{2} & \frac{1}{2} & 0 \\
			    0 & 0 & 0 & -\frac{1}{2} & -\frac{1}{2} & 1 \\
			\end{matrix}
			&
			\begin{matrix}
				1 & 1 & 0 & 0 & 0 & 0   \\
				-1 & -1 & 0 & 0 & 0 & 0 \\
				-1 & -1 & 0 & 0 & 0 & 0\\
			    1 & 1 & 0 & 0 & 0 & 0 \\
			    0 & 0 & 0 & 0 & 0 & 0 \\
			    0 & 0 & 0 & 0 & 0 & 0 \\
			\end{matrix}
			&
			\begin{matrix}
				1 & 0 & 0 & 0 & 0 & 0   \\
				0 & 0 & 0 & 1 & 0 & 0 \\
				0 & 0 & 1 & 0 & 0 & 0\\
			    0 & 1 & 0 & 0 & 0 & 0 \\
			    0 & 0 & 0 & 0 & 0 & 1 \\
			    0 & 0 & 0 & 0 & 1 & 0 \\
			\end{matrix}
		\end{array}\right).
	\]

and

	\[
		\rho(\pi_3)  = \left(\begin{array}{c|c|c} 
			\begin{matrix}
				0 & 4 & -2 & 1 & -1 & 0  \\
				0 & -1 & 0 & 0 & 1 & 0 \\
				0 & -5 & 2 & 0 & 1 & 0\\
			    0 & -1 & 0 & 0 & 0 & 1 \\
			    0 & -1 & 0 & 0 & 0 & 0 \\
			    1 & -1 & 1 & 0 & 0 & 0 \\
			\end{matrix}
			& 
			\begin{matrix}
				-2 & 0 & 0 & 0 & 0 & 0   \\
				0 & 0 & 0 & 0 & 0 & 0 \\
				2 & 0 & 0 & 0 & 0 & 0\\
			    0 & 0 & 0 & 0 & 0 & 0 \\
			    0 & 0 & 0 & 0 & 0 & 0 \\
			    0 & 0 & 0 & 0 & 0 & 0 \\
			\end{matrix}
			&
			\mbox{\Large $0$} 
			
			\\[0.5ex] & & \\[-5.5ex] & & \\[0.5ex] \hline & & \\[-2ex]
			\begin{matrix}
				0 & \frac{7}{2} & -\frac{3}{2} & 0 & -1 & 0   \\
				0 & -\frac{3}{2} & \frac{1}{2} & 0 & 1 & 0\\
				0 & -1 & 0 & 0 & 0 & 0\\
			    0 & -1 & 0 & 0 & 0 & 0 \\
			    0 & \frac{1}{2} & -\frac{1}{2} & 0 & -1 & 0 \\
			    0 & -\frac{9}{2} & \frac{3}{2} & 0 & 1 & 0 \\
			\end{matrix} 
			& 			
			\begin{matrix}
				-2 & 0 & 0 & 0 & 1 & 0   \\
				1 & 0 & 0 & 1 & 0 & 0 \\
				0 & 0 & 1 & 0 & 0 & 0\\
			    1 & 0 & 0 & 0 & 0 & 1 \\
			    -1 & 0 & 0 & 0 & 0 & 0 \\
			    2 & 1 & 0 & 0 & 0 & 0 \\
			\end{matrix}
			&\mbox{\Large $0$} 
			\\[0.5ex] & & \\[-5.5ex] & & \\[0.5ex] \hline & & \\[-2ex]
			\begin{matrix}
				0 & -\frac{3}{2} & \frac{1}{2} & 0 & 0 & 0   \\
				0 & 3 & -1 & 0 & -1 & 0 \\
				0 & \frac{5}{2} & -\frac{1}{2} & 0 & 0 & 0\\
			    0 & -2 & 1 & 0 & 1 & 0 \\
			    0 & -\frac{1}{2} & \frac{1}{2} & 0 & 0 & 0 \\
			    0 & \frac{3}{2} & -\frac{1}{2} & 0 & 0 & 0 \\
			\end{matrix}
			&
			\begin{matrix}
				1 & 0 & 0 & 0 & 0 & 0   \\
				-1 & 0 & 0 & 0 & 0 & 0 \\
				-1 & 0 & 0 & 0 & 0 & 0\\
			    1 & 0 & 0 & 0 & 0 & 0 \\
			    0 & 0 & 0 & 0 & 0 & 0 \\
			    0 & 0 & 0 & 0 & 0 & 0 \\
			\end{matrix}
			&
			\begin{matrix}
				0 & 0 & 0 & 1 & 0 & 0   \\
				0 & 0 & 0 & 0 & 1 & 0 \\
				0 & 1 & 0 & 0 & 0 & 0\\
			    0 & 0 & 0 & 0 & 0 & 1 \\
			    0 & 0 & 1 & 0 & 0 & 0\\
			    1 & 0 & 0 & 0 & 0 & 0 \\
			\end{matrix}
		\end{array}\right).
	\]
\subsection{Actions of the affine group on homology and monodromy of the 4-cover of the cube}
In this section, we compute the action of $\Aff(C)$ on the absolute homology of the translation cover of the cube. As a corollary of our computations, we obtain generators for the Kontsevich--Zorich monodromy group of $C$.

Recall, $\tilde \alpha\colon\Aff(C)\to \Sp(18,\mathbb R)$ denotes the representation arising from the action of the affine diffeomorphisms on $C$. In what follows, we actually compute the action of the Veech group and note that all the calculations and matrices only make sense up to the action of $\Aut(C)$. We let $\alpha\colon\Aff(C)\to \Sp(16,\mathbb R)$ denote the action on the zero-holonomy subspace.
The main result in this section is the following:

\begin{thm}
The Kontsevich--Zorich monodromy group of $C$ is generated by the following matrices

\[
    \alpha(g_1)\colonequals\left(
    \begin{array}{c|c|c}
    \begin{matrix} \mbox{\Large $\mathrm{Id}_5$}
    \end{matrix}
    & \begin{matrix} 0 & -1 & -7 & -8 & -1\\
    0 & 1 & 1 & 2 & 1\\
     0 & 0 & 8 & 8 & 0\\
     0 & 1 & 2 & 1 & 1\\
     0 & -1 & 0 & 1 & -1\\
    \end{matrix} & \begin{matrix}
     -\frac{1}{3} & -\frac{19}{3} & -\frac{13}{3} & -\frac{19}{3} & -\frac{16}{3} & -\frac{4}{3} \\
     \frac{4}{3} & \frac{7}{3} & \frac{4}{3} & \frac{7}{3} & \frac{1}{3} & \frac{1}{3} \\
     \frac{1}{3} & \frac{22}{3} & \frac{13}{3} & \frac{22}{3} & \frac{13}{3} & \frac{1}{3} \\
     \frac{1}{3} & \frac{7}{3} & \frac{1}{3} & \frac{7}{3} & \frac{4}{3} & \frac{4}{3} \\
      -1 & 1 & -1 & 1 & 0 & 0 \\
     \end{matrix}	\\[0.5ex] & & \\[-5.5ex] & & \\[0.5ex] \hline & & \\[-2ex]
     
    \begin{matrix}
    \mbox{\Large $0$} 
    \end{matrix} & \begin{matrix}-1 & -1 & -\frac{11}{2} & -\frac{15}{2} & -1\\
    0 & 0 & \frac{3}{2} & \frac{7}{2} & 1\\
     0 & 0 & 2 & 1 & 0\\
     0 & 0 & 1 & 2 & 0\\
     0 & 1 & -\frac{1}{2} & -\frac{5}{2} & 0\\
    \end{matrix} & \begin{matrix} -\frac{5}{3} & -\frac{31}{6} & -\frac{11}{3} & -\frac{43}{6} & -\frac{31}{6} & -\frac{7}{6} \\
     0 & \frac{3}{2} & 2 & \frac{7}{2} & \frac{1}{2} & \frac{1}{2} \\
    \frac{1}{3} & \frac{4}{3} & \frac{1}{3} & \frac{4}{3} & \frac{1}{3} & \frac{1}{3} \\
     \frac{1}{3} & \frac{4}{3} & \frac{1}{3} & \frac{4}{3} & \frac{1}{3} & \frac{1}{3} \\
     \frac{1}{3} & -\frac{1}{6} & -\frac{5}{3} & -\frac{13}{6} & -\frac{1}{6} & -\frac{1}{6} \\
    \end{matrix} \\[0.5ex] & & \\[-5.5ex] & & \\[0.5ex] \hline & & \\[-2ex]
    
    \begin{matrix}\mbox{\Large $0$}
    \end{matrix} & \begin{matrix}1 & 1 & \frac{5}{2} & \frac{5}{2} & 0 \\
    -1 & 0 & -4 & -6 & -1\\
    -1 & -1 & -\frac{7}{2} & -\frac{7}{2} & 0\\
    1 & 0 & 3 & 5 & 1\\
     1 & 1 & \frac{3}{2} & \frac{3}{2} & 0\\
     -1 & -1 & -\frac{5}{2} & -\frac{5}{2} & 0\\\end{matrix} & \begin{matrix} 2 & \frac{5}{2} & 1 & \frac{5}{2} & \frac{5}{2} & \frac{1}{2} \\
    -1 & -3 & -3 & -6 & -3 & -1 \\
    -\frac{4}{3} & -\frac{23}{6} & -\frac{1}{3} & -\frac{23}{6} & -\frac{17}{6} & -\frac{5}{6} \\
    \frac{2}{3} & \frac{8}{3} & \frac{8}{3} & \frac{17}{3} & \frac{8}{3} & \frac{2}{3} \\
    \frac{2}{3} & \frac{7}{6} & \frac{2}{3} & \frac{7}{6} & \frac{19}{6} & \frac{1}{6} \\
     -1 & -\frac{5}{2} & -1 & -\frac{5}{2} & -\frac{5}{2} & \frac{1}{2} \\
     \end{matrix}
    \end{array}
    \right),
\]

\[
    \begin{aligned}
        &\alpha(g_2) \colonequals \\
        &\left(
        \begin{array}{c|c|c}
            \begin{matrix}0 & -2 & -1 & -5 & 1\\
            -1 & 0 & -1 & 1 & 0\\
            0 & 1 & 1 & 6 & -1\\
            0 & 0 & 0 & 1 & 0\\
            1 & 1 & 1 & 2 & 1\\
            \end{matrix} &
            \begin{matrix}0 & 8 & 7 & 1 & 6\\
            0 & -2 & -1 & -1 & -2\\
            0 & -8 & -8 & 0 & -6\\
            0 & -1 & -2 & -1 & -1\\
            0 & -1 & 0 & 1 & -1\\
            \end{matrix} &
            \begin{matrix} \frac{13}{3} & \frac{1}{3} & \frac{13}{3} & \frac{1}{3} & \frac{4}{3} & \frac{4}{3} \\
            -\frac{7}{3} & -\frac{4}{3} & -\frac{7}{3} & -\frac{4}{3} & -\frac{1}{3} & -\frac{1}{3} \\
            -\frac{16}{3} & -\frac{1}{3} & -\frac{16}{3} & -\frac{1}{3} & -\frac{1}{3} & -\frac{1}{3} \\
            -\frac{7}{3} & -\frac{1}{3} & -\frac{7}{3} & -\frac{1}{3} & -\frac{4}{3} & -\frac{4}{3} \\
            1 & 3 & 1 & 3 & 2 & 2 \\
            \end{matrix} \\[0.5ex] & & \\[-5.5ex] & & \\[0.5ex] \hline & & \\[-2ex]
            
            \begin{matrix}0 & -2 & 0 & -\frac{9}{2} & 1\\
            -1 & 1 & -1 & \frac{3}{2} & 0\\
            0 & -1 & 0 & 1 & -1\\
            0 & 1 & 0 & 2 & 0\\
            0 & -1 & 0 & -\frac{3}{2} & 0\\ \end{matrix} &
            \begin{matrix}1 & \frac{15}{2} & \frac{11}{2} & 1 & \frac{11}{2}\\
            0 & -\frac{7}{2} & -\frac{3}{2} & 0 & -\frac{5}{2}\\
            0 & -1 & -2 & 0 & -1\\
            0 & -2 & -1 & 0 & -2\\
            0 & \frac{5}{2} & \frac{1}{2} & -1 & \frac{3}{2}\\ \end{matrix} & \begin{matrix}
            \frac{25}{6} & \frac{2}{3} & \frac{25}{6} & \frac{2}{3} & \frac{7}{6} & \frac{7}{6} \\
            -\frac{5}{2} & -1 & -\frac{5}{2} & -1 & -\frac{1}{2} & -\frac{1}{2} \\
            -\frac{7}{3} & -\frac{4}{3} & -\frac{7}{3} & -\frac{4}{3} & -\frac{4}{3} & -\frac{4}{3} \\
            -\frac{1}{3} & \frac{2}{3} & -\frac{1}{3} & \frac{2}{3} & \frac{2}{3} & \frac{2}{3} \\
            \frac{1}{6} & -\frac{1}{3} & \frac{1}{6} & -\frac{1}{3} & -\frac{5}{6} & -\frac{5}{6} \\
            \end{matrix} \\[0.5ex] & & \\[-5.5ex] & & \\[0.5ex] \hline & & \\[-2ex]
            
            \begin{matrix}1 & 0 & 0 & \frac{5}{2} & -1\\
            0 & -1 & 1 & -3 & 1\\
            -1 & 0 & 0 & -\frac{7}{2} & 1\\
            0 & 1 & -1 & 2 & -1\\
            1 & 1 & 1 & \frac{1}{2} & 0\\
            -1 & -1 & -1 & -\frac{3}{2} & 0\\ \end{matrix} & \begin{matrix}-1 & -\frac{5}{2} & -\frac{5}{2} & -1 & -\frac{5}{2}\\
            1 & 6 & 4 & 0 & 4\\
            1 & \frac{7}{2} & \frac{7}{2} & 1 & \frac{7}{2}\\
            -1 & -5 & -3 & 0 & -3\\
            -1 & -\frac{3}{2} & -\frac{3}{2} & -1 & -\frac{1}{2}\\
            1 & \frac{5}{2} & \frac{5}{2} & 1 & \frac{3}{2} \end{matrix} & \begin{matrix}
            -\frac{3}{2} & -1 & -\frac{3}{2} & 0 & -\frac{1}{2} & -\frac{1}{2} \\
            4 & 2 & 5 & 2 & 2 & 2 \\
            \frac{17}{6} & \frac{1}{3} & \frac{17}{6} & -\frac{2}{3} & \frac{5}{6} & \frac{5}{6} \\
            -\frac{11}{3} & -\frac{5}{3} & -\frac{14}{3} & -\frac{5}{3} & -\frac{5}{3} & -\frac{5}{3} \\
            \frac{5}{6} & \frac{4}{3} & \frac{5}{6} & \frac{4}{3} & \frac{5}{6} & -\frac{1}{6} \\
            \frac{1}{2} & -1 & \frac{1}{2} & -1 & -\frac{3}{2} & -\frac{1}{2} \\
            \end{matrix}
        \end{array}
        \right),
    \end{aligned}
\]
and
\[
    \arraycolsep=0.9951\arraycolsep
    \begin{aligned}
        &\alpha(g_3)\colonequals \\
        &\left(
        \begin{array}{c|c|c}\begin{matrix}-1 & 5 & -1 & 1 & 1\\
        0 & -1 & 0 & 1 & 0\\
        1 & -6 & 1 & -1 & 0\\
        -1 & -2 & -1 & -1 & -1\\
        0 & -1 & 0 & 0 & 0\\ \end{matrix} & \begin{matrix}
        0 & -4 & 1 & -7 & -6\\
        0 & 0 & -1 & 1 & 0\\
        0 & 6 & 0 & 8 & 8\\
        0 & 1 & -1 & 0 & 1\\
        0 & 1 & 1 & 2 & 1 \end{matrix} & \begin{matrix}
        0 & 1 & 0 & 1 & -3 & -3 \\
        -1 & 0 & -1 & 0 & 1 & 1 \\
        \frac{1}{3} & \frac{1}{3} & \frac{1}{3} & \frac{1}{3} & \frac{16}{3} & \frac{16}{3} \\
        -2 & -3 & -2 & -3 & -1 & -1 \\
        \frac{4}{3} & \frac{1}{3} & \frac{4}{3} & \frac{1}{3} & \frac{7}{3} & \frac{7}{3} \\
        \end{matrix} \\[0.5ex] & & \\[-5.5ex] & & \\[0.5ex] \hline & & \\[-2ex]
        
        \begin{matrix}-1 & \frac{9}{2} & -\frac{1}{2} & 0 & 1\\
        0 & -\frac{3}{2} & \frac{1}{2} & 1 & 0\\
        0 & -2 & 0 & -1 & 0\\
        1 & -1 & 1 & 1 & 0\\
        -1 & \frac{1}{2} & -\frac{3}{2} & -1 & -1\\ \end{matrix} & \begin{matrix}0 & -\frac{7}{2} & 0 & -\frac{13}{2} & -\frac{9}{2}\\
        1 & \frac{3}{2} & 0 & \frac{7}{2} & \frac{3}{2}\\
        0 & 2 & 0 & 1 & 2\\
        0 & 1 & 0 & 2 & 1\\
        -1 & -\frac{3}{2} & -1 & -\frac{7}{2} & -\frac{3}{2}\\ \end{matrix} & \begin{matrix}
        \frac{1}{6} & \frac{2}{3} & \frac{1}{6} & \frac{2}{3} & -\frac{17}{6} & -\frac{17}{6} \\
        \frac{1}{2} & 1 & \frac{1}{2} & 1 & \frac{5}{2} & \frac{5}{2} \\
        -\frac{2}{3} & -\frac{2}{3} & -\frac{2}{3} & -\frac{2}{3} & \frac{1}{3} & \frac{1}{3} \\
        \frac{4}{3} & \frac{4}{3} & \frac{4}{3} & \frac{4}{3} & \frac{7}{3} & \frac{7}{3} \\
        -\frac{5}{2} & -3 & -\frac{5}{2} & -3 & -\frac{7}{2} & -\frac{7}{2} \\ \end{matrix} \\[0.5ex] & & \\[-5.5ex] & & \\[0.5ex] \hline & & \\[-2ex]
        
        \begin{matrix}0 & -\frac{5}{2} & -\frac{1}{2} & -1 & -1\\
        0 & 4 & 0 & 0 & 0\\
        -1 & \frac{5}{2} & -\frac{1}{2} & 0 & 0\\
        1 & -2 & 1 & 0 & 0\\
        0 & -\frac{1}{2} & -\frac{1}{2} & 0 & 0\\
        0 & \frac{3}{2} & \frac{1}{2} & 1 & 1\\ \end{matrix} & \begin{matrix}0 & \frac{3}{2} & 0 & \frac{5}{2} & \frac{5}{2}\\
        0 & -3 & 0 & -5 & -4\\
        0 & -\frac{5}{2} & 0 & -\frac{7}{2} & -\frac{7}{2}\\
        0 & 2 & 0 & 4 & 3\\
        0 & \frac{1}{2} & 0 & \frac{1}{2} & \frac{1}{2}\\
        0 & -\frac{3}{2} & 0 & -\frac{3}{2} & -\frac{3}{2}\\ \end{matrix} & \begin{matrix} -\frac{1}{2} & -2 & -\frac{1}{2} & -1 & \frac{1}{2} & \frac{1}{2} \\
        \frac{1}{3} & \frac{1}{3} & \frac{1}{3} & \frac{1}{3} & -\frac{8}{3} & -\frac{11}{3} \\
        -\frac{1}{2} & 0 & -\frac{1}{2} & -1 & -\frac{5}{2} & -\frac{5}{2} \\
        \frac{2}{3} & \frac{2}{3} & \frac{2}{3} & \frac{2}{3} & \frac{5}{3} & \frac{8}{3} \\
        \frac{1}{6} & -\frac{1}{3} & -\frac{5}{6} & -\frac{1}{3} & \frac{1}{6} & \frac{1}{6} \\
        -\frac{1}{6} & \frac{4}{3} & \frac{5}{6} & \frac{4}{3} & -\frac{1}{6} & -\frac{1}{6} \\ \end{matrix} \end{array}
        \right),
    \end{aligned}
\]

where $g_1 = \begin{pmatrix} 1 & 2 \\ 0 & 1\end{pmatrix},$ $g_2 = \begin{pmatrix} 5 & -2 \\ 3 & -1\end{pmatrix},$ and $g_3 = \begin{pmatrix} 3 & -2 \\ 5 & -3\end{pmatrix}.$
\end{thm}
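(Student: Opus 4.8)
The plan is to follow verbatim the strategy used for the octahedron. The affine group $\Aff(C)$ surjects onto the Veech group $\SL(C)=\langle g_1,g_2,g_3\rangle$ with kernel $\Aut(C)$, so it suffices to produce, for each generator $g_i$, a representative of its action $\tilde\alpha(g_i)$ on the full homology $H_1(C;\R)$ and then restrict to the zero-holonomy basis $\{\Sigma_i,Z_j,H_k\}$; these representatives are only well defined modulo $\rho(\Aut(C))$, which is harmless and accounts for the statement that the matrices ``make sense up to the action of $\Aut(C)$.'' Together with the surjection $\Aff(C)\to\SL(C)$, this yields a generating set for the monodromy group. First I would compute each $\tilde\alpha(g_i)$ as an $18\times 18$ matrix on the spanning set $\{\sigma_i,\zeta_j,\eta_k\}$, and only afterward pass to $\alpha(g_i)$.

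To compute $\tilde\alpha(g_i)$ I would exploit that the affine diffeomorphism with linear part $g_i$ sends a curve of holonomy $v$ to a curve of holonomy $g_i v$. In particular the parabolic generator $g_1=\left(\begin{smallmatrix}1&2\\0&1\end{smallmatrix}\right)$ fixes every horizontal curve $\sigma_i$, so only the $\zeta_j$ and $\eta_k$ require work, whereas $g_2$ and $g_3$ move all three families. For curves whose images are visibly other basis curves (or short combinations of them), I would read the image directly off a cut-and-paste picture of $g_i(C)$, exactly as in Figure~\ref{fig:pi1C2} for $\pi_1$. For each remaining curve $c$, I would write its image $\gamma=\tilde\alpha(g_i)(c)$ as an unknown combination $\sum a_p\sigma_p+\sum b_q\zeta_q+\sum c_r\eta_r$ and pin down the coefficients by imposing the constraints $\langle\gamma,\delta\rangle=\langle c,\tilde\alpha(g_i)^{-1}\delta\rangle$ as $\delta$ ranges over the basis, where the right-hand sides are read off geometrically by counting signed crossings. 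Because the intersection matrix $\Omega$ is nondegenerate, this linear system has a unique solution, and the resulting map is forced to be symplectic, which provides a built-in consistency check.

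Once $\tilde\alpha(g_i)$ is known on the spanning set, I would restrict to $H_1^{(0)}(C)$ by linearity: $\alpha(g_i)(\Sigma_p)=\tilde\alpha(g_i)(\sigma_p)-\tilde\alpha(g_i)(\sigma_9)$, then $\alpha(g_i)(Z_q)=\tilde\alpha(g_i)(\zeta_q)-\tilde\alpha(g_i)(\zeta_8)$, and finally $\alpha(g_i)(H_r)=\tilde\alpha(g_i)(\eta_r)-2\tilde\alpha(g_i)(\sigma_9)-\tfrac{4}{3}\tilde\alpha(g_i)(\zeta_8)$, each time re-expressing the output in the basis $\{\Sigma_p,Z_q,H_r\}$. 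The output again lies in $H_1^{(0)}(C)$ because $g_i$ scales holonomy vectors linearly and hence preserves the zero-holonomy condition; indeed the shift $H_r=\eta_r-2\sigma_9-\tfrac{4}{3}\zeta_8$ was chosen precisely so that the holonomy $(4,4)-2(2,0)-\tfrac{4}{3}(0,3)$ vanishes. Comparing the resulting matrices with the displayed $\alpha(g_1),\alpha(g_2),\alpha(g_3)$ gives the claim.

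The conceptual content is identical to the octahedron case, so the genuine obstacle is the volume and delicacy of the bookkeeping rather than any new idea. Here $H_1(C;\R)$ is eighteen-dimensional with three distinct curve families, and two of the three generators, the hyperbolic $g_2$ and the order-four elliptic $g_3$, genuinely mix horizontal, vertical, and diagonal curves, so no family is fixed and essentially every image must be resolved through the intersection system. The normalizations, namely the holonomy lengths $2,3,4$ and especially the shift defining $H_r$, introduce half- and third-integer coefficients that make the arithmetic error-prone; I would therefore carry out the computation by hand on a handful of curves to fix the geometric picture and cross-check the complete matrices against \texttt{SageMath} and \texttt{surface\_dynamics}.
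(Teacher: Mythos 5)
Your proposal is correct and follows essentially the same route as the paper: the authors likewise reduce to computing representatives $\tilde\alpha(g_i)$ of the three Veech-group generators (well defined up to $\rho(\Aut(C))$) on the full $18$-dimensional homology, reading images off cut-and-paste pictures of $g_i(C)$ where possible and solving the nondegenerate intersection-form system for the rest, exactly as in the octahedron case, before restricting to the basis $\{\Sigma_p, Z_q, H_r\}$ of $H_1^{(0)}(C)$. The only difference is expository: the paper compresses all of this into ``apply the same computation as in the previous example,'' displaying figures for $g_1$ and $g_3$ and leaving $g_2$ to the reader, whereas you spell out the bookkeeping (including the holonomy normalization making $H_r$ zero-holonomy) explicitly.
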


\begin{proof} We apply the same computation as in the previous example. Here we show figures that describe the action of $g_1$ on $C$ (Figure~\ref{fig:g1(C2)}) and $g_3$ on $C$ (Figure~\ref{fig:g3(C2)}). Since $g_1$ is a horizontal shear map, the horizontal curves are preserved, and all other curves' slopes decrease. However, $g_3$ increases the slopes of all curves, hence a vertical arrangement of the squares is easier to follow (opposite horizontal sides are identified). We leave $g_2$ as an exercise for the reader. 

\begin{figure}[htbp] 
	\centering
    \includegraphics[width=6in]{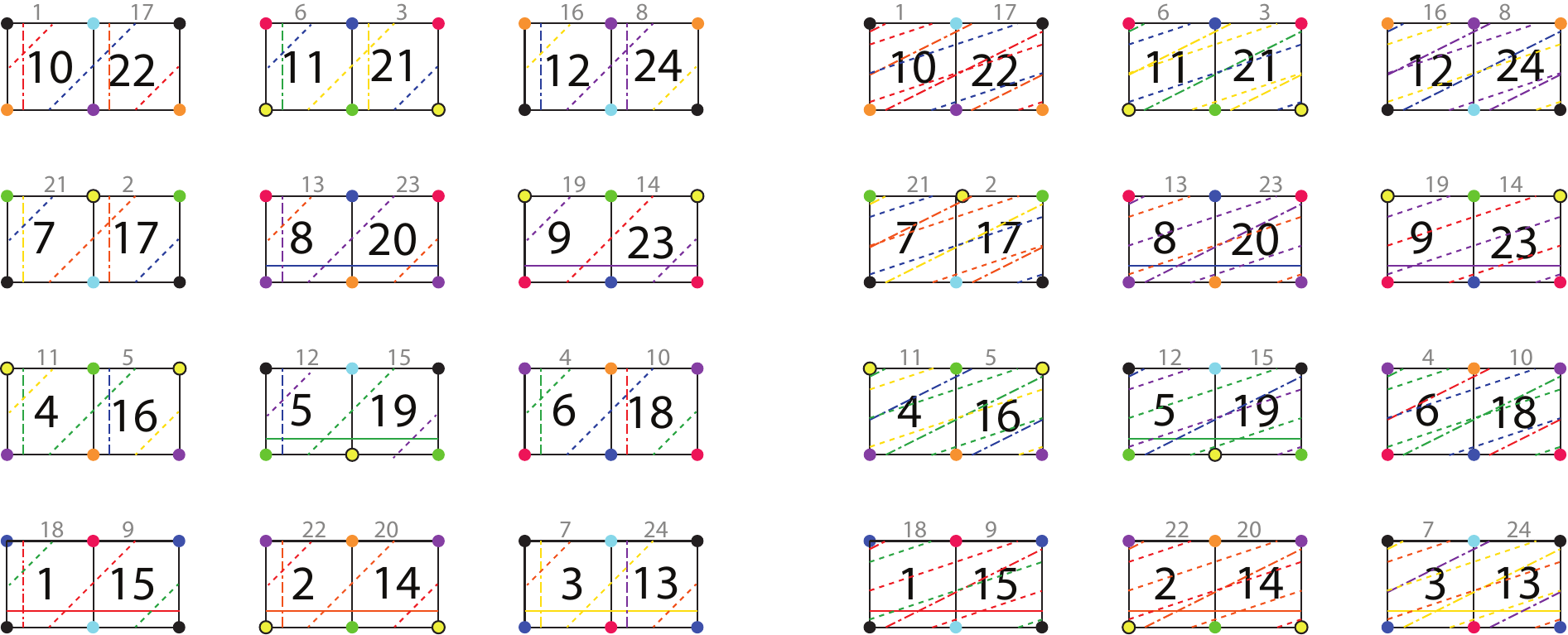}
    \caption{$C$ (left) and $g_1(C)$ (right)}
    \label{fig:g1(C2)}
\end{figure}

\begin{figure}[htbp] 
	\centering
    \includegraphics[width=5.5in]{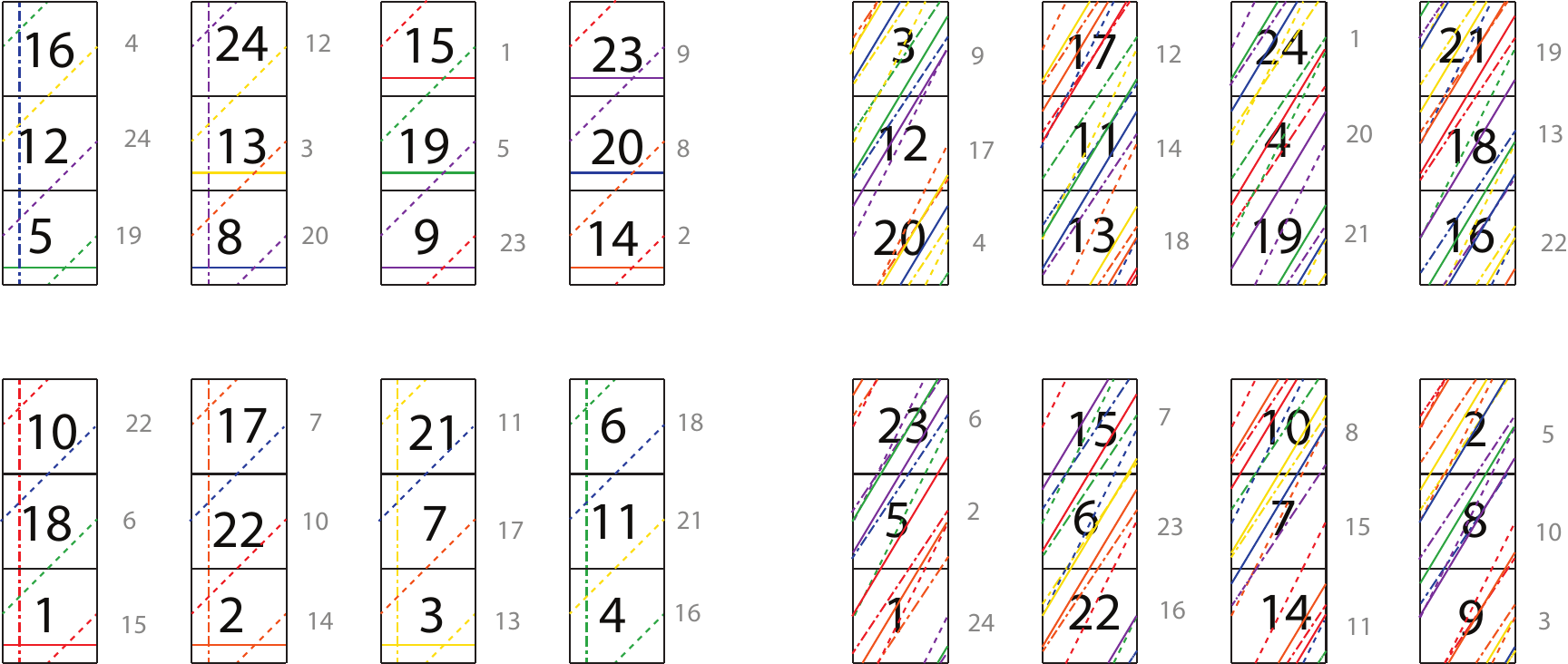}
    \caption{$C$ (left) and $g_3(C)$ (right)}
    \label{fig:g3(C2)}
\end{figure}

\end{proof}

\subsection{Identification of the monodromy group}
In this section, we use the representations arising from the automorphisms $\rho$ and affine diffeomorphisms $\alpha$ to identify the monodromy group.

\begin{thm} The Zariski closure of the monodromy group of the translation cover of the cube is $$\overline{\alpha(\Aff(C))}^{\Zar}\simeq \Sp(2,\R)^3\simeq \SL(2,\R)^3.$$ \end{thm}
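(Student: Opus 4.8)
The plan is to mirror the octahedron argument, getting an upper bound from the division-algebra criterion of Matheus--Yoccoz--Zmiaikou (Proposition~\ref{PropMYZ}) and a matching lower bound from the dimension of the Lie algebra generated by the logarithms of the parabolic generators. The new feature is that the zero-holonomy subspace $H_1^{(0)}(C)$ is $16$-dimensional and must split, as a $\rho(\Aut(C))$-module with $\Aut(C)\simeq S_4$, into exactly three isotypical pieces each supporting a copy of $\Sp(2,\R)$.

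\textbf{Step 1: Decompose the zero-holonomy space into isotypical components.} First I would compute the character of the representation $\rho$ of $S_4$ on $H_1^{(0)}(C)$ using the three matrices $\rho(\pi_1),\rho(\pi_2),\rho(\pi_3)$ computed above (taking traces of the restrictions to $H_1^{(0)}(C)$ and of products corresponding to each conjugacy class of $S_4$). Comparing with the character table of $S_4$, I expect the $16$-dimensional module to decompose as a sum of isotypical components. The target $\SL(2,\R)^3$ strongly suggests three components $W_{\tau_1},W_{\tau_2},W_{\tau_3}$, each of which is $V_{\tau}^{\oplus n_\tau}$ with the pattern of multiplicities $n_\tau$ and real dimensions $\dim_\R V_\tau$ chosen so that $\Sp(n_{\tau_k},\R)$ is $3$-dimensional for each $k$ — i.e.\ $n_{\tau_k}=2$ in each case (for instance $2$ copies of a $2$-dimensional irreducible, and possibly a $2$-dimensional standard and a $2$-dimensional product representation of multiplicity $2$), totaling $16$.

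\textbf{Step 2: Verify each centralizer is $\R$ and invoke Proposition~\ref{PropMYZ}.} For each irreducible $\tau_k$ appearing, I would restrict $\rho(\pi_1),\rho(\pi_2),\rho(\pi_3)$ to $V_{\tau_k}$ and solve the linear system $\rho(\pi_i)|_{V_{\tau_k}}X=X\rho(\pi_i)|_{V_{\tau_k}}$, exactly as in the octahedron case, to confirm $D_{\tau_k}\simeq\R$. Then Proposition~\ref{PropMYZ} gives
\[
\overline{\tilde\alpha(\Aff(C))|_{W_{\tau_k}}}^{\Zar}\subseteq\Sp(n_{\tau_k},\R)=\Sp(2,\R),
\]
and since the action respects the orthogonal decomposition $H_1^{(0)}(C)=W_{\tau_1}\oplus W_{\tau_2}\oplus W_{\tau_3}$, the full closure embeds in $\Sp(2,\R)^3$. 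This is the upper bound.

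\textbf{Step 3: Produce a matching lower bound.} Here the parabolic generator $\alpha(g_1)$ (image of the horizontal shear) is unipotent, so $t=\log\alpha(g_1)$ lies in the Lie algebra $\mathfrak g=\mathrm{Lie}\,\overline{\alpha(\Aff(C))}^{\Zar}$. Conjugating $t$ by powers of $\alpha(g_2)$ and $\alpha(g_3)$ produces further elements of $\mathfrak g$; I would exhibit $9$ linearly independent such conjugates, forcing $\dim\mathfrak g\ge 9=\dim\Sp(2,\R)^3$. Combined with Step~2 this yields equality, and the final isomorphism $\Sp(2,\R)\simeq\SL(2,\R)$ gives $\overline{\alpha(\Aff(C))}^{\Zar}\simeq\SL(2,\R)^3$. \textbf{The main obstacle} I anticipate is Step~3: unlike the octahedron, where the single group is $3$-dimensional so that three independent conjugates suffice, here I must rule out that the closure is a \emph{proper} subgroup of $\Sp(2,\R)^3$ — for instance a diagonally embedded $\Sp(2,\R)$ or a product of fewer factors arising from coincidences among the three isotypical blocks. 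Showing the $9$ conjugates are independent (equivalently, that the projection of $\mathfrak g$ onto each of the three $\mathfrak{sp}(2,\R)$ factors is surjective and the factors are genuinely independent) is the delicate computational heart of the argument, and I would organize it by tracking how $t$ and its conjugates act on each of the three blocks $W_{\tau_k}$ separately.
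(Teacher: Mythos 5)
Your proposal follows essentially the same route as the paper: the paper decomposes $H_1(C;\R)$ into isotypical components with $n_\tau=2$ throughout (one built from two copies of a $2$-dimensional irreducible of $S_4$ and two built from two copies of distinct $3$-dimensional irreducibles), verifies each centralizer is $\R$ to get the upper bound $\Sp(2,\R)^3$ via Proposition~\ref{PropMYZ}, and exhibits exactly nine linearly independent conjugates of $\log\alpha(g_1)$ by words in $\alpha(g_2)$ and $\alpha(g_3)$ for the matching lower bound, which automatically rules out the proper subgroups you worry about since any proper closed subgroup of $\Sp(2,\R)^3$ has dimension less than $9$. Your guessed list of irreducible dimensions in Step~1 is slightly off (the correct pattern is $2\cdot 2+2\cdot 3+2\cdot 3=16$), but since you defer that point to an explicit character computation, the argument is in substance identical to the paper's.
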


\begin{proof}

We begin by computing the isotypical components of the homology group $H_1(C;\R)$. Recall that $\Aut(C)\simeq S_4$ and note that all representations are either 1, 2 or 3 dimensional (see e.g.\ Fulton and Harris \cite{MR1153249}).
 
\begin{enumerate}
 \item 1-dimensional representations
 \end{enumerate}
 
These are simultaneous eigenvectors for the action of $\Aut(C).$ The eigenvalues of $\rho(\pi_1)$ and $\rho(\pi_2)$ are either $\pm1,$ and the eigenvalues of $\rho(\pi_3)$ are the sixth roots of unity. The common eigenvectors arise as vectors corresponding to eigenvalue 1. Let $$V(\lambda_1,\lambda_2,\lambda_3) = \{v\in H_1(C;\R):\rho(\pi_i)v=\lambda_i v, i=1,2,3\}$$
denote the simultaneous eigenspace corresponding to the ordered eigenvalues $\lambda_1,\lambda_2,\lambda_3$. Consider \begin{align*}
E_1 &\colonequals\text{span}_\R \{2\sigma_1-2\sigma_3 +\zeta_1-\zeta_2-\zeta_3-\zeta_4-2\zeta_8+2\eta_2+2\eta_3+\eta_6+\eta_8\},\\
E_2 &\colonequals \text{span}_\R \{-\sum \eta_i\}.
\end{align*}

Then,
$ V(1,1,1)=E_1\oplus E_2.$

\begin{enumerate}[resume]
\item 2-dimensional irreducible representations
\end{enumerate}

We pick a vector that is an eigenvector for both $\rho(\pi_1)$ and $\rho(\pi_2),$ but not $\rho(\pi_3):$  
$$v_1=8\sigma_1 -2\sigma_2 -10\sigma_3 -4\sigma_5 -4\sigma_8+ 7\zeta_1 -3\zeta_2 -2\zeta_3 -2\zeta_4 +\zeta_5 -9\zeta_8 +3\eta_2+ 8\eta_3 -7\eta_4 -\eta_6+ 3\eta_8$$ satisfies $\rho(\pi_1) v_1 = - v_1$ and $\rho(\pi_2) v_1 = v_1.$ Under $\rho(\pi_3),$ it yields $$\rho(\pi_3)v_1 = -2\sigma_1 -2\sigma_2+ 2\sigma_5 +2\sigma_8 -\zeta_1 -\zeta_2+ \zeta_5+ \zeta_8 -2\eta_1 -4\eta_3+ 4\eta_6+ 2\eta_8.$$ One can check that the rank of $\{v_1, \rho(\pi_3)v_1, \rho(\pi_1)\rho(\pi_3)v_1, \rho(\pi_2)\rho(\pi_3)v_1\}$ is two. We call $L_1 = \text{span}_\R \{v_1, \rho(\pi_3)v_1\}.$

Similarly, $v_2=\eta_1-\eta_2+\eta_3-\eta_4$ also satisfies $\rho(\pi_1) v_2 = - v_2$ and $\rho(\pi_2) v_2 = v_2,$ and $L_2 = \text{span}_\R \{\eta_1-\eta_2+\eta_3-\eta_4,-\eta_1-\eta_3+\eta_6+\eta_8\}$ is preserved under $\Aut(C).$ The latter is obtained by $\rho(\pi_3) v_2.$

We show that the action of $\Aut(C)$ restricted to $L_1$ and $L_2$ are conjugate to each other. On $L_1$ with respect to the basis $\{v_1, \rho(\pi_3) v_1\},$ we have  $$\rho(\pi_1)|_{L_1} = \begin{pmatrix} -1 & 1 \\ 0 & 1 \end{pmatrix}, \qquad \rho(\pi_2)|_{L_1} = \begin{pmatrix} 1 & 0 \\ 0 & 1\end{pmatrix}, \qquad \rho(\pi_3)|_{L_1} = \begin{pmatrix} 0 & -1 \\ 1 & -1\end{pmatrix}.$$ In fact, we get $\rho(\pi_i)|_{L_1} = \rho(\pi_i)|_{L_2},$ hence the two representations are equivalent. Furthermore, the centralizer of $\rho(\Aut(C))|_{L_i}$ is $\{a \mathrm{Id}_2:a\in \R\}\simeq \R,$ hence, the representation is real.

\begin{enumerate}[resume] \item 3-dimensional irreducible representations
\end{enumerate}

Finally, we decompose the rest into four three-dimensional representations, and show that they are two pairs of equivalent representations but not all four are equivalent. 

Let $$H_1 = \text{span}_\R \{\eta_1-\eta_2-\eta_3+\eta_4-\eta_6+\eta_8, \eta_1+\eta_2-\eta_3-\eta_4+\eta_6-\eta_8,-\eta_1+\eta_2+\eta_3-\eta_4-\eta_6+\eta_8\},$$ \begin{align*}
H_2 = \text{span}_\R\{&\sigma_1 -\sigma_3, \sigma_1 -\sigma_3 +\zeta_1 -\zeta_2 +\zeta_5 -\zeta_8 +\eta_2- \eta_4,\\
&4\sigma_1 -4\sigma_3+ 3\zeta_1 -\zeta_2+ \zeta_5 -3\zeta_8 -\eta_1+ 2\eta_2+ \eta_3 -2\eta_4 -\eta_6+\eta_8\}.
\end{align*}
Then
\begin{align*}
    \rho(\pi_1)|_{H_1} &= \begin{pmatrix} -1 & 1 & 0\\ 0 & 1 & 0\\
    0 & 1 & -1\end{pmatrix}, & \rho(\pi_2)|_{H_1} &= \begin{pmatrix} 0 & 1 & -1 \\
    1 & 0 & -1\\ 
    0 & 0 & -1\end{pmatrix}, & \rho(\pi_3)|_{H_1} &= \begin{pmatrix} 1 & 0 & -1\\
    0 & 0 & -1\\
    0 & 1 & -1\end{pmatrix},
\intertext{and}
    \rho(\pi_1)|_{H_2} &= \begin{pmatrix} -1 & 0 & 0\\
    0 & 1 & 0\\
    0 & 0 & -1\end{pmatrix}, & \rho(\pi_2)|_{H_2} &= \begin{pmatrix} 0 & 1 & 0\\
    1 & 0 & 0\\
    0 & 0 & -1\end{pmatrix}, & \rho(\pi_3)|_{H_2} &= \begin{pmatrix} \frac{1}{2} & -\frac{1}{2} & 2 \\
    \frac{1}{2} & -\frac{1}{2} & -2\\
    \frac{1}{4} & \frac{1}{4} & 0\end{pmatrix}.
\end{align*}

Again, the centralizer of $\rho(\Aut(C))|_{H_1}$ is $\{a \mathrm{Id}_3:a\in \R\}\simeq \R,$ hence the representations are real and we have 
$$\rho(\pi_i)|_{H_1} \begin{pmatrix} 1 & 0 & -1\\ 1 & 2 & 1\\ 2 & 0 & 2\end{pmatrix}
= \begin{pmatrix} 1 & 0 & -1\\ 1 & 2 & 1\\ 2 & 0 & 2\end{pmatrix}
\rho(\pi_i)|_{H_2},$$ for all $i=1,2,3,$ hence the two representations $\rho(\Aut(C_1))|_{H_i}$ are equivalent up to conjugation.

Let \begin{align*}
P_1 = \text{span}_\R \{&\sigma_1 -\sigma_2 -\sigma_5+ \sigma_8, \, -\sigma_1+ \sigma_2 -\sigma_5+ \sigma_8 -\zeta_1+ \zeta_2 -\zeta_5+ \zeta_8 -\eta_2+ \eta_4,\\
&12\sigma_1 -4\sigma_2 -12\sigma_3 -2\sigma_5 -2\sigma_8 -4\sigma_9+ 9\zeta_1 -5\zeta_2 -2\zeta_3 
-2\zeta_4+ 3\zeta_5 -11\zeta_8 {}-{} \\
&3\eta_1+8\eta_2+ 5\eta_3 -6\eta_4 -\eta_6+ 3\eta_8\},
\end{align*}
\begin{align*}
    P_2 = \text{span}_\R \{&{-\zeta_1}+ \zeta_2 -\zeta_3+ \zeta_4, \, 2\sigma_1 -2\sigma_3+ 2\zeta_1-\zeta_3 -\zeta_4+ \zeta_5 -\zeta_8 -\eta_1+ \eta_2+ \eta_3 -\eta_4,\\
    &{-\zeta_3}+ \zeta_4 -\zeta_5+ \zeta_8\}.
\end{align*}
Then we have,
\begin{align*}
    \rho(\pi_1)|_{P_1} &= \begin{pmatrix} 0 & -1 & 2\\ 0 & 1 & 0\\
    \frac{1}{2} & \frac{1}{2} & 0\end{pmatrix}, & \rho(\pi_2)|_{P_1} &= \begin{pmatrix} 0 & 1 & 0 \\
    1 & 0 & 0\\ 
    0 & 0 & -1\end{pmatrix}, & \rho(\pi_3)|_{P_1} &= \begin{pmatrix} 0 & -1 & 2\\
    0 & 1 & 0\\
    -\frac{1}{2} & \frac{1}{2} & -1\end{pmatrix},
\intertext{and}
    \rho(\pi_1)|_{P_2} &= \begin{pmatrix} 1 & 0 & 0\\ 0 & 0 & 1\\
    0 & 1 & 0\end{pmatrix}, & \rho(\pi_2)|_{P_2} &= \begin{pmatrix} -1 & 0 & 0 \\
    0 & -1 & 0\\ 
    0 & 0 & 1\end{pmatrix}, & \rho(\pi_3)|_{P_2} &= \begin{pmatrix} 0 & 0 & 1\\
    1 & 0 & 0\\
    0 & 1 & 0\end{pmatrix}.
\end{align*}

With the same argument as above, the representations are real, and the two representations are equivalent. However, 

$$\rho(\pi_i)|_{P_1} \begin{pmatrix} 1 & 1 & 2\\ 0 & 2 & 0\\ -1 & 1 & 2\end{pmatrix}
= \begin{pmatrix} 1 & 1 & 2\\ 0 & 2 & 0\\ -1 & 1 & 2\end{pmatrix}
\rho(\pi_i)|_{P_2},$$ but neither $\rho(\Aut(C))|_{P_1}$ nor $\rho(\Aut(C))|_{P_2}$ is equivalent to $\rho(\Aut(C))|_{H_i}.$

Thus, we have decomposed $H_1(C;\R)$ into its irreducible pieces:
$$H_1(C;\R)= (E_1\oplus E_2)\oplus(L_1\oplus L_2)\oplus (H_1 \oplus H_2 )\oplus(P_1\oplus P_2).$$
The isotypical components are those inside the same pair of parentheses. Notice that, for dimensional considerations, $E_1\oplus E_2$ corresponds to the tautological plane.
    
By the results of Matheus--Yoccoz--Zmiaikou \cite{MYZ}, we have the Zariski closure of the monodromy group
\begin{itemize}
    \item  $\tilde \alpha(\Aff(C))|_{L_1\oplus L_2}$ is contained in $\Sp(2,\R)$,
     \item  $\tilde \alpha(\Aff(C))|_{H_1 \oplus H_2}$ is contained in $\Sp(2,\R)$, and
    \item  $\tilde \alpha(\Aff(C))|_{P_1\oplus P_2}$ is contained in $\Sp(2,\R)$.
\end{itemize}

Thus, the upper bound for the dimension of the Zariski closure of the full monodromy group is 3 + 3 + 3 = 9.

On the other hand, we find a list of 9 elements in the Lie algebra of $\overline{\alpha(\Aff(C))}^{\Zar}.$

We denote $A=\alpha(g_1), B=\alpha(g_2),$ and $C=\alpha(g_3).$ Since $A$ is parabolic, we take $a=\text{log}(A)$ and denote by $\phi_X$ the conjugation map $\phi_X(a) = X a X^{-1}.$ Then $$a, \, \phi_B(a), \, \phi_{B^2}(a), \, \phi_{B^3}(a), \, \phi_C(a), \, \phi_{BC}(a), \, \phi_{B^2C}(a), \, \phi_{AC}(a), \, \phi_{A^2C}(a)$$ form a linearly independent set inside the Lie algebra of $\overline{\alpha(\Aff(C))}^{\Zar}$.
\end{proof}
\subsection{Lyapunov exponents of the 4-cover of the cube}

In this section we compute the Lyapunov spectrum of the 4-cover of the cube $C$.

\begin{prop}
Counting multiplicities, the positive Lyapunov spectrum of the translation cover of the cube is
$$\{1\} \cup \{2/3,2/3,2/3\} \cup \{1/3,1/3,1/3\} \cup \{1/3,1/3\},$$
where the unions indicate the exponents corresponding to distinct, symplectically-orthogonal, irreducible pieces of the Hodge bundle.
\end{prop}

\begin{proof}
Recall the following decomposition of the homology of $C$ into isotypical components,
$$H_1(C;\R)= (E_1\oplus E_2)\oplus(L_1\oplus L_2)\oplus (H_1 \oplus H_2 )\oplus(P_1\oplus P_2).$$
The subspace $E_1\oplus E_2$ is the tautological plane and carries a Lyapunov exponent of 1.

Let $C_{i,j} = C/\langle \pi_i,\pi_j\rangle$ for $i,j\in\{1,2,3\}$. In case $i=j$ we denote $C_{i,j}$ simply by $C_i$. We note that $C_{1,2}$ and $C_{1,3}$ are genus 2 surfaces. Thus, by the Eskin--Kontsevich--Zorich formula \cite{MR3270590} and the \texttt{surface\_dynamics} package, we obtain that the sum of the Lyapunov exponents on $C_{1,2}$ and $C_{2,3}$ are $ 4/3$. That is, we obtain a Lyapunov exponent of $1/3$ on $H_1(C_{1,2};\mathbb R)$ and $H_1(C_{2,3};\mathbb R)$. 

We now try to understand how the homology group of the quotients sit inside the homology group of $C$. In particular, we find in which isotypical components the homology of the quotients lie in. Once we have one Lyapunov exponent in an isotypical component, we use Proposition \ref{prop_MYZ_exp} to argue that we have all of them in the isotypical component.

Since the deck group of the covering $C \to C_{1,2}$ is exactly the $\langle \pi_1, \pi_2\rangle$, we can identify
\[
    H_1(C_{1,2};\mathbb R)=\{v\in H_1(C;\mathbb R): \pi_1v=v,\pi_2 v =v\}.
\]

Notice that the intersection of $H_1(C_{1,2}; \mathbb R)$ and $L_1\oplus L_2$ is non-empty. (It is spanned by $\eta_1+\eta_2+\eta_3+\eta_4-2\eta_6-2\eta_8$ and $-4\sigma_1+6\sigma_2+10\sigma_3-5\zeta_1+5\zeta_2+2\zeta_3+\zeta_4-3\zeta_5+7\zeta_8+4\eta_1-3\eta_2+7\eta_4-7\eta_6-7\eta_8$.)

By Proposition \ref{prop_MYZ_exp}, the multiplicity of 1/3 in $L_1\oplus L_2$ is 2.

A similar computation shows that the intersection of $H_1(C_{1,3}; \mathbb R)$ and $P_1\oplus P_2$ is nonempty. (It is spanned by
$-2\sigma_1 + 2\sigma_3 -\zeta_1 -\zeta_2 + 3\zeta_3 -\zeta_4 + \eta_1 -\eta_2 -\eta_3 + \eta_4$ and 
 $-8\sigma_1 + 12\sigma_3 + 4\sigma_5 + 4\sigma_9 -6\zeta_1 + 2\zeta_2 + 2\zeta_3 + 2\zeta_4 + 8\zeta_8 + 3\eta_1 -5\eta_2 -5\eta_3 + 3\eta_4 + \eta_6 -3\eta_8$.)

By Proposition \ref{prop_MYZ_exp}, the multiplicity of 1/3 in $P_1\oplus P_2$ is 3.

Lastly, the Eskin--Kontsevich--Zorich formula \cite{MR3270590} and the \texttt{surface\_dynamics} package yields that the sum of the Lyapunov exponents on $C$ must be 14/3. On the other hand, by Proposition \ref{prop_MYZ_exp}, the multiplicity of the Lyapunov exponent in the remaining isotypical component $H_1\oplus H_2$ is 3. Combining these results along with the Lyapunov exponents we previously obtained proves that the isotypical component $H_1\oplus H_2$ carries a Lyapunov exponent of 2/3 with multiplicity 3.
\end{proof}

\section{On the 2-cover of the mutetrahedron}\label{sec: mut}

\subsection{Unfolding of the mutetrahedron as a square-tiled surface}
Mutetrahedron is infinite polyhedral surface tiled by hexagons that is invariant under a rank-three lattice in $\R^3.$ Its quotient under the lattice is a genus three Riemann surface tiled by four hexagons. The mutetrahedron is a half-translation surface and its translation cover of genus five is studied in \cite{AL}. We note that the Veech group of the translation cover of the mutetrahedron is the same as the Veech group of the translation cover of the octahedron.

We will denote  by $M$ the image under $T^2$ of the translation cover from \cite{AL}.

The generators of the Veech group are
$$\SL(M)=\langle S^{-1}T, TST^{-1}\rangle < \SL(2,\Z).$$


Figure~\ref{fig:pi1(MT4)} describes $M$ with its homology basis.

\begin{figure}[htbp] 
	\centering
    \includegraphics[width=6in]{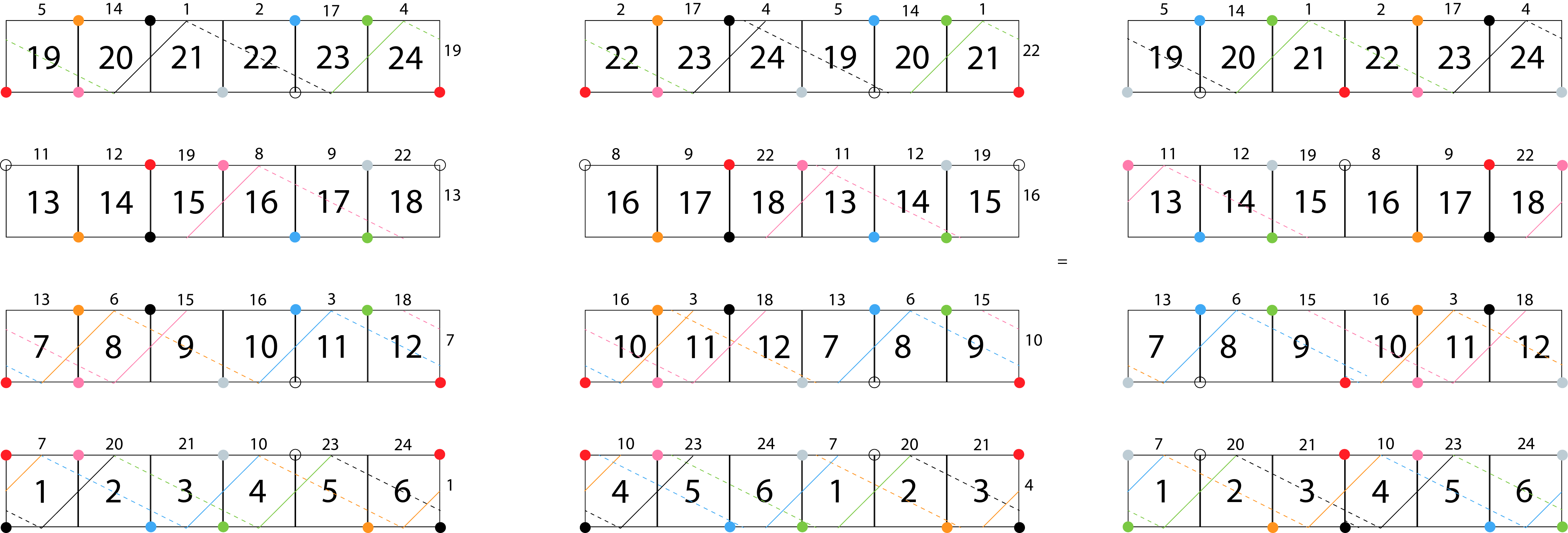}
    \caption{$M$ (left), action of $\pi_1$ on $M$ (center), $\pi_1(M)$ after cut-and-paste (right)}
    \label{fig:pi1(MT4)}
\end{figure}

\subsection{Basis of homology and zero-holonomy}\label{subsec: mut intersection} 

The absolute homology of the translation cover $M$ is 10-dimensional and an explicit basis  can be given by the curves $\sigma_i$ that start on the bottom of square $i$ with holonomy $\left(\begin{matrix}
2   \\
2 \\
\end{matrix}\right)$ and $\zeta_i$ that start on the bottom of square $i$ with holonomy $\left(\begin{matrix}
-4   \\
2 \\
\end{matrix}\right)$ where $i=1,3,4,6,8$ (See Figure~\ref{fig:pi1(MT4)}). 

We define the basis on the zero-holonomy subspace by $\Sigma_i \colonequals \sigma_i - \sigma_{8}$ and $Z_i  \colonequals \zeta_i - \zeta_8 $ for $i=1,3,4,6,8$. 

\subsection{Intersection form}
We record the intersection matrix encoding the algebraic intersection form of $M$ with respect to the basis given above. 

	\[
		\Omega= \left(\begin{array}{c|c} 
			 \mbox{\Large $0$} & \begin{matrix}
				-1 & 0  & -1 & -1 & 0  \\
				-1 & -1 &  0 & -1 & 0 \\
				-1 & -1 & -1 &  0 & 0 \\
				0  & -1 & -1 & -1 & -1  \\
				0  &  0 &  0 & -1 & -1  \\
			\end{matrix}  \\[0.5ex] & \\[-5.5ex] & \\[0.5ex] \hline & \\[-2ex]
			 \begin{matrix}
				1 & 1 & 1 & 0 & 0  \\
				0 & 1 & 1 & 1 & 0 \\
				1 & 0 & 1 & 1 & 0 \\
				1 & 1 & 0 & 1 & 1  \\
				0 & 0 & 0 & 1 & 1  \\
			\end{matrix} & \mbox{\Large $0$} 
		\end{array}\right).
	\]

\subsection{Action of the automorphism group on homology}\label{sec: mt aut}
Using \texttt{surface\_dynamics}, we get $\Aut(M)\simeq (\Z/2\Z)^3,$ the elementary abelian group of order eight and that it is generated by the permutations 
\begin{align*}
\pi_1 & = (1,4)(2,5)(3,6)(7,10)(8,11)(9,12)(13,16)(14,17)(15,18)(19,22)(20,23)(21,24)\\
\pi_2 & = (1,8)(2,9)(3,10)(4,11)(5,12)(6,7)(13,24)(14,19)(15,20)(16,21)(17,22)(18,23) \\
\pi_3 & =  (1,18)(2,13)(3,14)(4,15)(5,16)(6,17)(7,22)(8,23)(9,24)(10,19)(11,20)(12,21).
\end{align*}

Following a similar computation from the previous sections, we let $\rho$ denote the representation arising from the action of $\Aut(M)$ on $H_1(M;\R)$. Then the action of $\pi_1$ on $H_1(M;\R)$ (see Figure~\ref{fig:pi1(MT4)}) is given by
	\[
		\rho(\pi_1)  = \left(\begin{array}{c|c} 
			\begin{matrix}
				0 & 0 & 1 & 0 & 1   \\
				0 & 0 & 0 & 1 & 0 \\
				1 & 0 & 0 & 0 & -1\\
			    0 & 1 & 0 & 0 & 0 \\
			    0 & 0 & 0 & 0 & 1 \\
			\end{matrix}  & \mbox{\Large $0$} \\[0.5ex] & \\[-5.5ex] & \\[0.5ex] \hline & \\[-2ex]
			 \mbox{\Large $0$}  & 
			 \begin{matrix}
				0 & 0 & 1 & 0 & -1   \\
				0 & 0 & 0 & 1 & 0 \\
				1 & 0 & 0 & 0 & 1\\
			    0 & 1 & 0 & 0 & 0 \\
			    0 & 0 & 0 & 0 & 1 \\
			\end{matrix}
		\end{array}\right).
	\]
A similar computation shows the action of $\pi_2$ and $\pi_3$ on $H_1(M;\R)$ is given by
	\[
		\rho(\pi_2)  = \left(\begin{array}{c|c} 
			\begin{matrix}
				0 & 0 & 1 & 0 & 1   \\
				0 & 1 & 0 & 0 & 0 \\
				0 & 0 & -1 & 0 & 0\\
			    0 & 0 & 0 & 1 & 0 \\
			    1 & 0 & 1 & 0 & 0 \\
			\end{matrix}  & \mbox{\Large $0$} \\[0.5ex] & \\[-5.5ex] & \\[0.5ex] \hline & \\[-2ex]
			 \mbox{\Large $0$}  & 
			 \begin{matrix}
				0 & 0 & -1 & 0 & 1   \\
				0 & 0 & 0 & 1 & 0 \\
				0 & 0 & 1 & 0 & 0\\
			    0 & 1 & 0 & 0 & 0 \\
			    1 & 0 & 1 & 0 & 0 \\
			\end{matrix}
		\end{array}\right),
	\]
and
	\[
		\rho(\pi_3)  = \left(\begin{array}{c|c} 
			\begin{matrix}
				1 & 1 & 0 & 1 & 0   \\
				0 & 0 & 0 & -1 & 0 \\
				-1 & 0 & 0 & 0 & 1\\
			    0 & -1 & 0 & 0 & 0 \\
			    1 & 1 & 1 & 1 & 0 \\
			\end{matrix}  & \mbox{\Large $0$} \\[0.5ex] & \\[-5.5ex] & \\[0.5ex] \hline & \\[-2ex]
			 \mbox{\Large $0$}  & 
			 \begin{matrix}
				1 & 1 & 0 & 1 & 0   \\
				0 & 0 & 0 & -1 & 0 \\
				-1 & 0 & 0 & 0 & 1\\
			    0 & -1 & 0 & 0 & 0 \\
			    1 & 1 & 1 & 1 & 0 \\
			\end{matrix}
		\end{array}\right).
	\]

\subsection{Action of the affine group on homology and monodromy of the 2-cover of the mutetrahedron}\label{subsec: mut aff}
In this section, we compute the action of $\Aff(M)$ on the absolute homology of the translation cover of the mutetrahedron. As a corollary of our computations, we obtain generators for the Kontsevich--Zorich monodromy group of $M$.

Recall, $\tilde \alpha\colon\Aff(M)\to \Sp(10,\mathbb R)$ denotes the representation arising from the action of the affine diffeomorphisms on $M$. In what follows, we actually compute the action of the Veech group and note that all the calculations and matrices only make sense up to the action of $\Aut(M)$. We let $\alpha\colon\Aff(M)\to \Sp(8,\mathbb R)$ denote the action on the zero-holonomy subspace.
The main result in this section is the following:

\begin{thm}
The Kontsevich--Zorich monodromy group of $M$ is generated by the following two matrices \[
\alpha(S^{-1}T) = \left(\begin{array}{c|c} 
		\mbox{\Large $0$} & 
			\begin{matrix} 0 & 0 & -1 & 0\\
                    -1 & -1 & -1 & -1\\
                    0 & 0 & 1 & 1\\
                    1 & 0 & 1 & 0\end{matrix} \\[0.5ex] & \\[-5.5ex] & \\[0.5ex] \hline & \\[-2ex]
			 \begin{matrix}
                    0 & 0 & -1 & 0 \\
                    0 & 1 & 0 & 0 \\
                    1 & 0 & 1 & 0 \\
                    0 & 0 & 0 & 1			 \end{matrix}  & 
			 \begin{matrix}
			   1 & 0 & 1 & 1\\
               0 & 0 & -1 & 0\\
               -1 & -1 & -1 & -1\\
               0 & 1 & 1 & 1
			\end{matrix}
		\end{array}\right)\] and

\[\alpha(TST^{-1}) = \left(\begin{array}{c|c} 
			\begin{matrix}
			1 & 0 & 1 & 0 \\
            0 & 1 & 0 & 0\\
            0 & 0 & -1 & 0\\
            0 & 0 & 0 & 1 \\
			\end{matrix}  & 
			\begin{matrix} -1 & 0 & -1 & 0\\
                    0 & -1 & -1 & -1\\
                    1 & -1 & 1 & 2\\
                    0 & 0 & 1 & 0\\
                    \end{matrix} \\[0.5ex] & \\[-5.5ex] & \\[0.5ex] \hline & \\[-2ex]
			 \mbox{\Large $0$}  & 
			 \begin{matrix}
			  0 & 0 & -1 & -1\\
              1 & 1 & 1 &1\\
              0 & 0 & 1 & 0\\
              -1 & 0 & -1 & 0
			\end{matrix}
		\end{array}\right).\]
\end{thm}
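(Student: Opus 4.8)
The plan is to mirror the strategy already carried out for the octahedron in Section~\ref{sec: octa}: it suffices to compute the representation $\tilde\alpha\colon\Aff(M)\to\Sp(10,\R)$ on the full absolute homology $H_1(M;\R)$, evaluated on the two Veech-group generators $S^{-1}T$ and $TST^{-1}$, and then restrict the resulting matrices to the zero-holonomy basis $\{\Sigma_i,Z_i\}_{i=1,3,4,6}$. Since $S^{-1}T$ and $TST^{-1}$ generate $\SL(M)$, and since all computations are understood up to the action of $\Aut(M)$ exactly as in the octahedron case, the two restricted matrices $\alpha(S^{-1}T)$ and $\alpha(TST^{-1})$ are precisely the claimed generators of the monodromy group.

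For each generator $g$ and each basis curve $c\in\{\sigma_1,\sigma_3,\sigma_4,\sigma_6,\sigma_8,\zeta_1,\zeta_3,\zeta_4,\zeta_6,\zeta_8\}$, I would apply $g$ geometrically and re-cut the sheared polygon back into the standard square-tiled form, as in Figure~\ref{fig:pi1MT4}. For a generator whose linear part preserves the slope of a family of curves, those curves are carried to honest curves whose homology classes can be read off directly from the diagram, giving several matrix entries for free. For the remaining image class $\gamma\colonequals\tilde\alpha(g)(c)$, I would write $\gamma=\sum_i a_i\sigma_i+\sum_i b_i\zeta_i$ and determine the coefficients by counting the intersection numbers $\langle\gamma,\sigma_j\rangle$ and $\langle\gamma,\zeta_j\rangle$ directly from the re-pasted picture, then equating each count to the bilinear expression supplied by the intersection matrix $\Omega$. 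Because $\Omega$ is nondegenerate, this linear system has a unique solution, which pins down $\gamma$ completely; this is exactly the device used to compute $\tilde\alpha(T^3)(\zeta_1)$ for the octahedron. Running this over all ten basis curves for each of $S^{-1}T$ and $TST^{-1}$ yields the two $10\times10$ matrices $\tilde\alpha(S^{-1}T)$ and $\tilde\alpha(TST^{-1})$.

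Finally, I would pass to the zero-holonomy subspace by linearity. Since $\Sigma_i=\sigma_i-\sigma_8$ and $Z_i=\zeta_i-\zeta_8$ for $i\in\{1,3,4,6\}$, the matrices $\alpha(S^{-1}T)$ and $\alpha(TST^{-1})$ are obtained from the full matrices by evaluating on these differences and re-expanding in the $\{\Sigma_i,Z_i\}$ basis, discarding the tautological directions. The main obstacle here is bookkeeping rather than theory: one must correctly follow each curve through the shear and the subsequent cut-and-paste, and neither $S^{-1}T$ nor $TST^{-1}$ is a pure shear in the ambient horizontal/vertical frame (indeed $S^{-1}T$ is elliptic of order six), so comparatively many curves are displaced and the intersection-number system must be solved for most basis classes. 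As a consistency check at the end, I would verify that the two resulting matrices preserve $\Omega$, i.e.\ that they are genuinely symplectic, which detects any sign or indexing error introduced during the tracking.
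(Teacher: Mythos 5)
Your proposal is correct and follows essentially the same route as the paper: the authors likewise trace each basis curve through the affine map and the subsequent cut-and-paste (they display the computation for $TST^{-1}$ and note that $S^{-1}T$ is similar), solve for the image homology classes using intersection numbers against the matrix $\Omega$, and then restrict by linearity to the zero-holonomy basis $\Sigma_i=\sigma_i-\sigma_8$, $Z_i=\zeta_i-\zeta_8$. The only difference is your added check that the resulting matrices preserve $\Omega$, a sensible safeguard the paper does not state explicitly.
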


\begin{proof}
We compute the action of $\Aff(M)$ on the homology of $M$. Recall that our generators are $S^{-1}T$ and $TST^{-1}$. Thus, it suffices to find $\alpha(S^{-1}T)$ and $\alpha(TST^{-1})$. We only write the computation for $\alpha(TST^{-1})$. The computation for $\alpha(S^{-1}T)$ is similar.

The following shows the action of $TST^{-1}$ on the homology.
\begin{figure}[htbp] 
	\centering
    \includegraphics[width=6in]{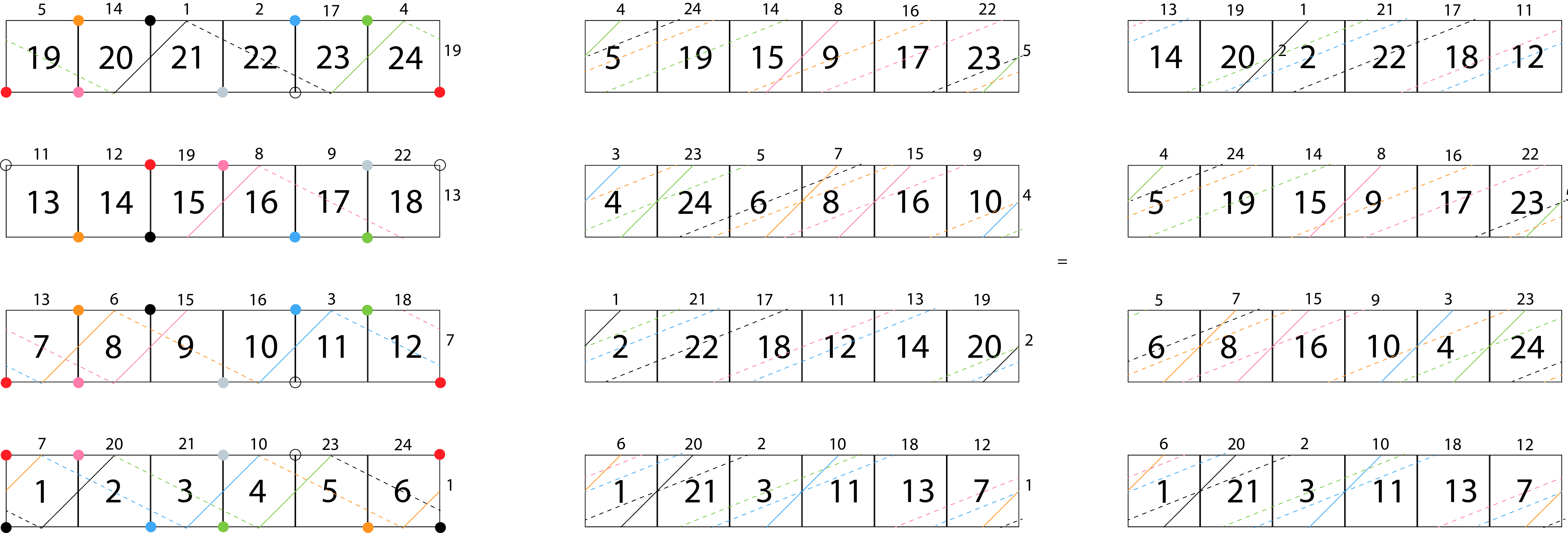}
    \caption{$M$ (left) and $TST^{-1}(M)$ (right)}
    \label{fig:TST^-1(MT4)}
\end{figure}

By utilizing the intersection form, we have action of $TST^{-1}$ acts on the basis of homology as:\begin{align*}
\tilde\alpha(TST^{-1})(\sigma_1) &= \sigma_1,\\  
\tilde\alpha(TST^{-1})(\sigma_3) &= \sigma_{3},\\
\tilde\alpha(TST^{-1})(\sigma_4) &= \sigma_1 -\sigma_4 + \sigma_8,\\   
\tilde\alpha(TST^{-1})(\sigma_{6}) &= \sigma_6,\\ 
\tilde\alpha(TST^{-1})(\sigma_{8}) &= \sigma_8,\\  
\tilde\alpha(TST^{-1})(\zeta_1) &= -2\sigma_1 + \frac{1}{2}\sigma_3 + \frac{1}{2}\sigma_4 - \frac{1}{2}\sigma_6 - \frac{3}{2}\sigma_8 + \frac{1}{2}\zeta_1 + \frac{1}{2}\zeta_3 - \frac{1}{2}\zeta_6 + \frac{1}{2}\zeta_8,\\
\tilde\alpha(TST^{-1})(\zeta_3) &=-\sigma_1 - \frac{1}{2}\sigma_3 - \frac{3}{2}\sigma_4 - \frac{1}{2}\sigma_6 + \frac{1}{2}\sigma_8 + \frac{1}{2}\zeta_1 + \frac{1}{2}\zeta_3 + \frac{1}{2}\zeta_6 - \frac{1}{2}\zeta_8,\\
\tilde\alpha(TST^{-1})(\zeta_4) &=-2\sigma_1 - \frac{1}{2}\sigma_3 + \frac{1}{2}\sigma_4 + \frac{1}{2}\sigma_6 - \frac{3}{2}\sigma_8 - \frac{1}{2}\zeta_1 + \frac{1}{2}\zeta_3 + \zeta_4 - \frac{1}{2}\zeta_6 + \frac{1}{2}\zeta_8 \\
\tilde\alpha(TST^{-1})(\zeta_6) &=-\sigma_1 - \frac{1}{2}\sigma_3 + \frac{3}{2}\sigma_4 - \frac{1}{2}\sigma_6 - \frac{5}{2}\sigma_8 - \frac{1}{2}\zeta_1 + \frac{1}{2}\zeta_3 + \frac{1}{2}\zeta_6 + \frac{1}{2}\zeta_8 \\
\tilde\alpha(TST^{-1})(\zeta_8) &= -\sigma_1 + \frac{1}{2}\sigma_3 - \frac{1}{2}\sigma_4 - \frac{1}{2}\sigma_6 - \frac{3}{2}\sigma_8 + \frac{1}{2}\zeta_1 - \frac{1}{2}\zeta_3 + \frac{1}{2}\zeta_6 + \frac{1}{2}\zeta_8.
\end{align*}

Using the above and recalling that the basis on the zero-holonomy subspace is given by $\Sigma_i \colonequals \sigma_i - \sigma_{8}$ and $Z_i  \colonequals \zeta_i - \zeta_8 $ for $i=1,3,4,6,8$, we have
\begin{align*}
    \tilde\alpha(TST^{-1})(\Sigma_1) &= \Sigma_1,\\
    \tilde\alpha(TST^{-1})(\Sigma_3) &= \Sigma_3,\\ 
    \tilde\alpha(TST^{-1})(\Sigma_4) &= \Sigma_1-\Sigma_4,\\ 
    \tilde\alpha(TST^{-1})(\Sigma_6) &= \Sigma_6,\\ 
    \tilde\alpha(TST^{-1})(Z_1) &= -\Sigma_1+\Sigma_4 +Z_3 -Z_6,\\
    \tilde\alpha(TST^{-1})(Z_3) &= -\Sigma_3-\Sigma_4 +Z_3,\\ 
    \tilde\alpha(TST^{-1})(Z_4) &= -\Sigma_1-\Sigma_3 +\Sigma_4+\Sigma_6 -Z_1 + Z_3 + Z_4 -Z_6,\\ 
    \tilde\alpha(TST^{-1})(Z_6) &= -\Sigma_3 +2\Sigma_4 -Z_1 + Z_3.
\end{align*}
This completes the proof.
\end{proof}

\subsection{Identification of the monodromy group}

\begin{thm} The Zariski closure of the monodromy group of the translation cover of the mutetrahedron is
$$\overline{\alpha(\Aff(M))}^{\Zar}\simeq \Sp(2,\R)^4.$$

\end{thm}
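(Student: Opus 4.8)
The plan is to follow the same two-sided strategy used for the octahedron and the cube: decompose the zero-holonomy subspace $H_1^{(0)}(M)$ into isotypical components for the action of $\Aut(M)$, apply Proposition~\ref{PropMYZ} to obtain an ``upper bound,'' and then produce enough elements of the Lie algebra of the Zariski closure to force a matching ``lower bound.'' Since $\Aut(M)\simeq(\Z/2\Z)^3$ is abelian and each $\pi_i$ is an involution, the representation $\rho$ is simultaneously diagonalizable and every real irreducible is a one-dimensional character $\chi=(\epsilon_1,\epsilon_2,\epsilon_3)\in\{\pm1\}^3$, where $\epsilon_i$ records the sign of $\rho(\pi_i)$. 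First I would restrict the matrices $\rho(\pi_1),\rho(\pi_2),\rho(\pi_3)$ of Section~\ref{sec: mt aut} to the zero-holonomy subspace in the basis $\{\Sigma_1,\Sigma_3,\Sigma_4,\Sigma_6,Z_1,Z_3,Z_4,Z_6\}$ and compute their joint eigenspaces. The claim to establish is that exactly four distinct characters occur and that each occurs with multiplicity exactly two, so that $H_1^{(0)}(M)=W_1\oplus W_2\oplus W_3\oplus W_4$ with each $W_j$ two-dimensional. The even multiplicity is forced because the $\Aut(M)$-invariant intersection form is nondegenerate on each isotypical component and each character is self-dual; what genuinely must be checked is that no character occurs with multiplicity four.

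Granting this decomposition, for each $j$ the centralizer $D_{\tau_j}$ of the one-dimensional real character inside $\mathrm{End}_\R(V_{\tau_j})$ is $\R$, so Proposition~\ref{PropMYZ} yields $\overline{\alpha(\Aff(M))|_{W_j}}^{\Zar}\subseteq\Sp(2,\R)$. Consequently $\overline{\alpha(\Aff(M))}^{\Zar}\subseteq\Sp(2,\R)^4$, an algebraic group of dimension $12$, which is the upper bound.

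For the lower bound I would use that $TST^{-1}$ is parabolic (its linear part has trace $2$), so $\alpha(TST^{-1})$ is quasi-unipotent and the logarithm $t$ of its unipotent part (equivalently, of a suitable power) lies in the Lie algebra of $\overline{\alpha(\Aff(M))}^{\Zar}$. Writing $\phi_X(t)=XtX^{-1}$, I would then exhibit twelve elements of the form $\phi_X(t)$, with $X$ ranging over short words in $\alpha(S^{-1}T)$ and $\alpha(TST^{-1})$, and verify by a direct rank computation that they are linearly independent. Since $\Sp(2,\R)^4$ is connected of dimension $12$, a $12$-dimensional Lie subalgebra forces the identity component of the Zariski closure to be all of $\Sp(2,\R)^4$; together with the upper bound this gives $\overline{\alpha(\Aff(M))}^{\Zar}\simeq\Sp(2,\R)^4$.

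I expect the main obstacle to be twofold. The first part is pinning down the isotypical decomposition precisely enough to rule out a multiplicity-four character, since that is exactly what distinguishes the answer $\Sp(2,\R)^4$ from, say, $\Sp(4,\R)\times\Sp(2,\R)^2$. The second, and more delicate, part is choosing the conjugating words $X$ so that the twelve elements $\phi_X(t)$ are independent: this requires that $t$ have nonzero image in each of the four $\mathfrak{sl}_2$ summands and that the chosen conjugations sweep out each summand. This is subtle precisely because $\alpha(S^{-1}T)$ is elliptic of finite order and $\alpha(TST^{-1})$ is quasi-unipotent, so only finitely many distinct conjugates arise and one must confirm that their span reaches the full twelve dimensions. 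Both steps reduce to finite linear-algebra verifications analogous to those carried out for the octahedron and cube.
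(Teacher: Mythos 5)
Your proposal is correct and follows essentially the same route as the paper: the paper likewise computes the joint $\{\pm1\}$-eigenspaces of the commuting involutions $\rho(\pi_1),\rho(\pi_2),\rho(\pi_3)$, finding five two-dimensional isotypical components (one tautological and four in $H_1^{(0)}(M)$, so in particular no multiplicity-four character occurs), which yields the upper bound $\Sp(2,\R)^4$, and for the lower bound it takes $g=\log\bigl(\alpha(TST^{-1})^2\bigr)$ --- your ``suitable power'' of the parabolic --- and exhibits twelve linearly independent conjugates $\phi_X(g)$ with $X$ ranging over words in the two generators. The only quibble is your aside that ``only finitely many distinct conjugates arise'' (the group generated by $\alpha(S^{-1}T)$ and $\alpha(TST^{-1})$ is infinite, so this is not literally true), but it plays no role since, exactly as in the paper, the independence of the twelve elements is verified by a direct rank computation.
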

\begin{proof}
We begin by finding the isotypical components. Notice that the generators of $\Aut(M)$ are matrices of order 2 and commute with each other. This follows since $\Aut(M)\simeq E_8$ is abelian. It is easy to check that the eigenvalues of each are either $\pm1$. Since they commute, there exists a basis of $H_1(M;\R)$ of simultaneous eigenvectors. 

Let $$V(\lambda_1,\lambda_2,\lambda_3) = \{v\in H_1(M;\R):\rho(\pi_i)v=\lambda_i v, i=1,2,3\}$$
denote the simultaneous eigenspace corresponding to the ordered eigenvalues $\lambda_1,\lambda_2,\lambda_3$. Below we describe only the nontrivial spaces along with their bases,
\begin{align*}
V(-1,-1,1) &= \text{span}_\R\{-\sigma_1 ^4 +\sigma_4 ^4,-\zeta_3 ^4 + \zeta_6 ^4\}\\
V(-1,1,1) &= \text{span}_\R\{-\sigma_3 ^4 +\sigma_6 ^4,-\zeta_1 ^4 + \zeta_4 ^4\}\\
V(1,-1,-1) &= \text{span}_\R\{-\sigma_4 ^4 +\sigma_8 ^4,-\zeta_1 ^4 + \zeta_8 ^4\}\\
V(1,1,-1) &= \text{span}_\R\{-\sigma_1 ^4 + \sigma_3 ^4  + \sigma_6 ^4 - \sigma_8 ^4,\zeta_3 ^4 - \zeta_4 ^4 + \zeta_6 ^4 -\zeta_8 ^4 \}\\
V(1,1,1) &= \text{span}_\R\{-\sigma_1 ^4 - \sigma_8 ^4,-\zeta_4 ^4 - \zeta_8 ^4\}.
\end{align*}

These two-dimensional subspaces correspond to the isotypical components of $H_1(M;\R)$. Since the isoypical components are two-dimensional, the Zariski closure of the monodromy group $\tilde \alpha(\Aff(M))$ preserves each of these two-dimensional spaces and is no larger than $\Sp(2,\R)$. Since there are five such spaces, the monodromy group has dimension smaller than or equal to 15. Removing the contribution of the tautological subspace (which is always isomorphic to $\Sp(2,\R)$) implies that the Zariski closure of the monodromy group $\alpha(\Aff(M))$ is contained in the 12 dimensional Lie group $\Sp(2,\R)^4.$

On the other hand, we can find  12 elements in the Lie algebra of $\overline{\alpha(\Aff(M))}^{\Zar}$ which are linearly independent. Denote the generators of the Kontesevich--Zorich monodromy group of $M$ by $A=\alpha(S^{-1}T)$ and $B=\alpha(TST^{-1})$.

Since $B^2$ is parabolic, we consider $g=\log(B^2)$ of the Lie algebra $\overline{\alpha(\Aff(M))}^{\Zar}.$ 
Let $\phi_X$ denote the conjugation map $\phi_X(g)=XgX^{-1}$. The following 12 elements form a linearly independent set inside of the Lie algebra of $\overline{\alpha(\Aff(M))}^{\Zar}$:
\begin{align*}
    &g, \phi_{A}(g), \phi_{A^2}(g), \phi_{BA^2}(g),\phi_{ABA^2}(g),\phi_{A^2BA^2}(g),\phi_{(BA^2)^2}(g),\phi_{A(BA^2)^2}(g),\\
    &\phi_{A^2(BA^2)^2}(g),\phi_{(BA^2)^3 (BA)^2}(g),\phi_{(BA^2)^2A}(g),\phi_{A^2(BA)^2}(g)
\end{align*}
This shows that the Zariski closure of the monodromy group $\alpha(\Aff(M))$ has dimension greater than or equal to 12.
Thus, 
$$\overline{\alpha(\Aff(M))}^{\Zar}\simeq \Sp(2,\R)^4\simeq \SL(2,\R)^4.$$
\end{proof}
\subsection{Lyapunov exponents of the 2-cover of the mutetrahedron}

We compute the Lyapunov spectrum of the 2-cover of the mutetrahedron $M$.

\begin{prop}
Counting multiplicities, the positive Lyapunov spectrum of the translation cover of the mutetrahedron is
$$\{1\} \cup \{1/2\} \cup \{1/2\} \cup \{1/2\} \cup \{1/2\},$$
where the unions indicate the exponents corresponding to distinct, symplectically-orthogonal, irreducible pieces of the Hodge bundle.
\end{prop}

\begin{proof}
    Recall we have the following decomposition of the homology of $M$ into isotypical components,
    $$H_1(M;\mathbb R) = V(1,1,1)\oplus V(1,1,-1)\oplus V(1,-1,-1)\oplus V(-1,1,1)\oplus V(-1,-1,1).$$
    The subspace $V(1,1,1)$ is the tautological plane and carries a Lyapunov exponent of 1.
    
    Let $M_{i,j} = M/\langle \pi_i,\pi_j\rangle$ for $i,j\in\{1,2,3\}$. In case $i=j$ we denote $M_{i,j}$ simply by $M_i$. We note that $M_{1,2}$ and $M_{2,3}$ are genus 2 surfaces. Thus, by the Eskin--Kontsevich--Zorich formula \cite{MR3270590} and the \texttt{surface\_dynamics} package, we obtain that the sum of the Lyapunov exponents on $M_{1,2}$ and $M_{2,3}$ are $ 3/2$. That is, we obtain a Lyapunov exponent $1/2$ on $H_1(M_{1,2};\mathbb R)$ and $H_1(M_{2,3};\mathbb R)$.
    
    It remains to identify how these subspaces sit inside of $H_1(M;\mathbb R).$ 
    Let us consider $M_{1,2}$ first. The deck group of the covering $M \to M_{1,2}$ is precisely $\langle \pi_1, \pi_2\rangle$, and the desired subspace of $H_1(M;\mathbb R)$ is the one fixed by this deck group action. That is,
    \[
        H_1(M_{1,2};\mathbb R)=\{v\in H_1(M;\mathbb R): \pi_1v=v,\pi_2 v =v\}.
    \]
    
    However, since $\Aut(M)=\langle \pi_1,\pi_2,\pi_3\rangle$ is abelian, any simultaneous eigenvector of $\pi_1, \pi_2$ is an eigenvector of $\pi_3$. Since we computed all simultaneous eigenspaces previously, we know 
    \[
        H_1(M_{1,2};\mathbb R) = V(1,1,1)\oplus V(1,1,-1).
    \]
    Since the tautological plane $V(1,1,1)$ carries a Lyapunov exponent of 1, $V(1,1,-1)$ will carry a Lyapunov exponent of $1/2$.
    By considering the quotient $M_{2,3}$, we obtain that a Lyapunov exponent of 1/2 corresponds to $V(-1,1,1)$.

    It remains to show which Lyapunov exponents correspond to $V(-1,-1,1)$ and $V(-1,1,1)$. Consider $M_1$ and note that this is a genus 3 surface. As before, we have
    $$H_1(M_1;\mathbb R)=\{v\in H_1(M;\mathbb R): \pi_1v=v\}.$$
    Since $\Aut(M)$ is abelian, we have
    $$H_1(M_1;\mathbb R) = V(1,1,1)\oplus V(1,1,-1)\oplus V(1,-1,-1).$$
    From our calculation on $M_{1,2}$ we know that $V(1,1,-1)$ corresponds to a Lyapunov exponent $1/2$ and the tautological plane corresponds to 1. By applying the Eskin--Kontsevich--Zorich formula \cite{MR3270590} and the \texttt{surface\_dynamics} package, we conclude that $V(1,-1,-1)$ also corresponds to 1/2. 
    Lastly, by applying the Eskin--Kontsevich--Zorich formula \cite{MR3270590} and the \texttt{surface\_dynamics} package to $M$, the Lyapunov exponent 1/2 corresponds to $V(-1,1,1)$. 
\end{proof}


\nocite{*}

 \end{document}